\newtheorem{theorem}{Theorem}
\newtheorem{corollary}{Corollary}
\newtheorem{proposition}{Proposition}
\newtheorem{lemma}{Lemma}
\newtheorem{remark}{Remark}
\newtheorem{fact}{Fact}
\newtheorem{example}{Example}
\newtheorem{definition}{Definition}
\def \T{\textup{T}}
\def\span{\textup{span}\,}
\def \r{\textup{rank\,}}
\def \br{\textup{rank}_2\,}
\def \pr{\textup{rank}_p\,}
\def\diag{\textup{diag}}
\def\RO{\textup{RO}}
\def\S{\textup{S}}
\begin{document}
	\title{Generalized spectral characterizations of almost controllable graphs}
	\author{\small	Wei Wang$^{\rm a}$\quad Fenjin Liu$^{\rm b}$\thanks{Corresponding author: fenjinliu@yahoo.com} \quad Wei Wang$^{\rm c}$
		\\
		{\footnotesize$^{\rm a}$School of Mathematics, Physics and Finance, Anhui Polytechnic University, Wuhu 241000, P.R. China}\\
		{\footnotesize$^{\rm b}$School of Science,
			Chang'an University,
			Xi'an 710064, P.R. China}\\
		{\footnotesize$^{\rm b}$School of Mathematics and Statistics,
			Xi'an Jiaotong University,
			Xi'an  710049, P.R. China}
	}
	\date{}
	
 \maketitle
\begin{abstract}
Characterizing graphs by their spectra is an important topic in spectral graph theory, which has attracted a lot of attention of researchers in recent years.
It is generally very hard and challenging to show a given graph to be determined by its spectrum. In Wang~[J. Combin. Theory, Ser. B, 122 (2017):
438-451], the author gave a simple arithmetic condition for a family of graphs being determined by their generalized spectra. However, the method applies only to a family of the so called \emph{controllable graphs}; it fails when the graphs are non-controllable.

  In this paper, we introduce a class of non-controllable graphs, called \emph{almost controllable graphs}, and prove that, for any pair of almost controllable graphs $G$ and $H$ that are generalized cospectral, there exist exactly two rational orthogonal matrices $Q$ with constant row sums such that $Q^{\rm T}A(G)Q=A(H)$, where $A(G)$ and $A(H)$ are the adjacency matrices of $G$ and $H$, respectively. The main ingredient of the proof is a use of the Binet-Cauchy formula. As an application, we obtain a simple criterion for an almost controllable graph $G$ to be determined by its generalized spectrum, which in some sense extends the corresponding result for controllable graphs. \\

\noindent\textbf{Keywords:} Graph spectra; Cospectral graphs; Rational orthogonal matrix; Controllable graph. \\

\noindent\textbf{Mathematics Subject Classification:} 05C50
\end{abstract}
\section{Introduction}
\label{intro}
Let $G$ be a simple graph  with vertex set $V(G)=\{1,2,\ldots,n\}$. The \emph{adjacency matrix} of $G$ is the $n\times n$ real symmetric matrix $A(G)=(a_{ij})$ (simply for $A$) where $a_{ij}=1$ if $i$ and $j$ are adjacent;  $a_{ij}=0$ otherwise. The  \emph{spectrum} of a graph $G$, denoted by $\sigma(G)$, is
the multiset of all the $n$ eigenvalues  of the matrix $A(G)$. The
\emph{generalized spectrum} of a graph $G$ is defined to be the pair $(\sigma(G),\sigma(\overline{G}))$, where $\overline{G}$ is the complement of $G$. Two graphs $G$ and $H$ are called \emph{cospectral} (resp.\emph{ generalized cospectral}) if $G$ and $H$ share the same spectrum (resp. generalized spectrum).

It is well-known that for any two cospectral graphs $G$ and $H$, their adjacency matrices $A(G)$ and $A(H)$ are similar via an orthogonal matrix $Q$, that is,  $Q^\T A(G) Q=A(H)$.
An orthogonal matrix $Q$ is \emph{regular} if each row sum of $Q$ is 1, that is, $Qe=e$, where $e$ is the all-ones vector. Note that all regular orthogonal matrices of a fixed order $n$ constitute a group under the ordinary matrix multiplication. We call it \emph{regular orthogonal group} and denote it by $\RO_n$. A fundamental result, due to Johnson and Newman \cite{johnson1980JCTB}, states that the adjacency matrices of two generalized cospectral graphs are similar {\color{red}via} a regular orthogonal matrix.
\begin{theorem}\cite{johnson1980JCTB}\label{genconQ} Two graphs $G$ and $H$ on the same vertex set $\{1,2,\ldots,n\}$  are generalized cospectral if and only if there exists a $Q\in \RO_n$ such that $Q^\T A(G)Q = A(H)$.
\end{theorem}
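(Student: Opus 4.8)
The plan is to prove the two implications separately; the converse (sufficiency) is a short computation, while the forward direction (necessity) carries the real content. Throughout I write $A=A(G)$, $B=A(H)$ and $J=ee^\T$.

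For sufficiency, suppose $Q\in\RO_n$ satisfies $Q^\T AQ=B$. Orthogonal similarity already gives $\sigma(G)=\sigma(H)$, so I only need to control the complements. The idea is to exploit $A(\overline{G})=J-I-A$. Since $Q$ is regular, $Qe=e$, and orthogonality then gives $Q^\T e=Q^\T Qe=e$; hence $Q^\T JQ=(Q^\T e)(e^\T Q)=ee^\T=J$ and $Q^\T IQ=I$. Consequently $Q^\T A(\overline{G})Q=J-I-B=A(\overline{H})$, so $\sigma(\overline{G})=\sigma(\overline{H})$ and the graphs are generalized cospectral.

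For necessity, the first step is to convert generalized cospectrality into a condition on the moments $e^\T A^ke$. I would apply the matrix determinant lemma to $\det\bigl((x+1)I+A-ee^\T\bigr)$ to obtain the classical identity
\[
\phi_{\overline{G}}(x)=(-1)^n\phi_G(-x-1)\bigl(1-e^\T((x+1)I+A)^{-1}e\bigr),
\]
and the analogous one for $H$. Because $\sigma(G)=\sigma(H)$ forces $\phi_G=\phi_H$, cancelling the common nonzero polynomial factor $\phi_G(-x-1)$ in the equality $\phi_{\overline{G}}=\phi_{\overline{H}}$ yields $e^\T((x+1)I+A)^{-1}e=e^\T((x+1)I+B)^{-1}e$ as rational functions. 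Expanding as a power series in $1/(x+1)$ shows this is equivalent to $e^\T A^ke=e^\T B^ke$ for every $k\ge0$; together with $\mathrm{tr}(A^k)=\mathrm{tr}(B^k)$ (again from $\sigma(G)=\sigma(H)$), these are the invariants I want.

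The second step builds the regular orthogonal matrix from these invariants. Consider the Krylov subspaces $\mathcal K_A=\span\{e,Ae,A^2e,\dots\}$ and $\mathcal K_B=\span\{e,Be,B^2e,\dots\}$, which are $A$- and $B$-invariant. Since $\langle B^ie,B^je\rangle=e^\T B^{i+j}e=e^\T A^{i+j}e=\langle A^ie,A^je\rangle$, the two generating families have identical Gram matrices, so the assignment $B^ke\mapsto A^ke$ extends to a well-defined linear isometry $Q_0\colon\mathcal K_B\to\mathcal K_A$ that fixes $e$ and satisfies $AQ_0=Q_0B$. In particular $\dim\mathcal K_A=\dim\mathcal K_B$, and $A|_{\mathcal K_A}$, $B|_{\mathcal K_B}$ are orthogonally similar, hence cospectral; as $\sigma(A)=\sigma(B)$, the restrictions to the invariant orthogonal complements $\mathcal K_A^{\perp}$, $\mathcal K_B^{\perp}$ then have equal spectra and equal dimension, so they are orthogonally similar via some $Q_1\colon\mathcal K_B^{\perp}\to\mathcal K_A^{\perp}$. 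Setting $Q=Q_0\oplus Q_1$ gives an orthogonal matrix with $AQ=QB$ and $Qe=e$, that is, $Q\in\RO_n$ and $Q^\T AQ=B$.

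I expect the main obstacle to lie in the first step of necessity: teasing out the moment equality $e^\T A^ke=e^\T B^ke$ from the mere equality of the complement spectra, which depends delicately on the determinant-lemma identity and on the legitimacy of cancelling $\phi_G(-x-1)$. A secondary but genuine point requiring care in the second step is verifying that equal Gram matrices make $B^ke\mapsto A^ke$ well defined (so that every linear relation among the $B^ke$ is inherited by the $A^ke$) and isometric, and that the complementary restrictions are forced to share the same spectrum.
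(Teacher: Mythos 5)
Your proof is correct. Note that the paper itself offers no proof of this statement---it is quoted as a known result of Johnson and Newman---so there is no internal argument to compare against; what you have written is essentially the classical proof of that theorem. Both halves check out: the sufficiency computation via $Q^\T JQ=J$ is exactly right, and in the necessity direction the determinant-lemma identity, the cancellation of the nonzero polynomial $\phi_G(-x-1)$ in the field of rational functions, and the extraction of the moment equalities $e^\T A^ke=e^\T B^ke$ from the Laurent expansion at infinity are all sound. The two points you flag as delicate are indeed the crux, and your treatment of them works: equality of the Gram matrices $\bigl(e^\T A^{i+j}e\bigr)_{i,j}$ and $\bigl(e^\T B^{i+j}e\bigr)_{i,j}$ forces every linear relation among the $B^ke$ to hold among the $A^ke$ (compare squared norms of $\sum_k c_kB^ke$ and $\sum_k c_kA^ke$), so $Q_0$ is well defined, isometric, fixes $e$, and intertwines the restrictions; and since $\sigma(A)$ is the multiset union of the spectra of $A$ restricted to $\mathcal{K}_A$ and to $\mathcal{K}_A^{\perp}$ (both invariant because $A$ is symmetric), the complementary restrictions inherit equal spectra and equal dimensions, hence are orthogonally similar by the spectral theorem. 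The assembled $Q=Q_0\oplus Q_1$ then satisfies $Q^\T AQ=B$ and $Qe=e$ as required.
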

In general, for two generalized cospectral graphs $G$ and $H$, the matrix $Q$ satisfying $Q^\T A(G)Q = A(H)$ may not be unique. Indeed, if both $G$ and $H$ are empty graphs (i.e., $E(G)=E(H)=\emptyset$), then $Q$ can take any matrix in $\RO_n$.

A key observation of Wang~\cite{wang2006EUJC} is that under a suitable restriction, the corresponding matrix $Q$ is unique and rational. For a graph $G$ with $n$ vertices,  the \emph{walk matrix} of $G$ is
$$W(G):=[e,A(G)e,\ldots,A^{n-1}(G)e].$$ A graph $G$ is \emph{controllable} if  $W(G)$ is invertible. We use  $\mathcal{G}_n$ to denote the set of all controllable graphs
with $n$ vertices.
\begin{theorem}\cite{wang2006EUJC}\label{unique} Let $G\in \mathcal{G}_n$ and $H$ be a graph generalized cospectral to $G$. Then $H\in \mathcal{G}_n$ and there exists a unique matrix $Q\in \RO_n$ such that $Q^\T A(G) Q=A(H)$. Moreover, the unique $Q$ satisfies $Q=W(G)W^{-1}(H)$ and hence is rational.
\end{theorem}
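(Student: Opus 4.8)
The plan is to prove Theorem~\ref{unique}, which makes three claims for a controllable graph $G$ and any graph $H$ generalized cospectral to it: that $H$ is also controllable, that there is a unique $Q\in\RO_n$ with $Q^\T A(G)Q=A(H)$, and that this $Q$ equals $W(G)W^{-1}(H)$ and is therefore rational. I would organize the argument around the key intertwining relation between $Q$ and the walk matrices.

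First I would invoke Theorem~\ref{genconQ} to obtain \emph{some} regular orthogonal matrix $Q$ with $Q^\T A(G)Q=A(H)$, equivalently $A(G)Q=QA(H)$. The central observation is that such a $Q$ must carry the walk matrix of $H$ to that of $G$. Indeed, since $Qe=e$ (regularity) and $A(G)Q=QA(H)$, a short induction gives $A^k(G)e=QA^k(H)e$ for every $k\ge 0$, because $A^k(G)Qe=QA^k(H)e$ and $Qe=e$. Collecting these columns for $k=0,1,\ldots,n-1$ yields the matrix identity
\begin{equation}\label{eq:intertwine}
W(G)=QW(H).
\end{equation}

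From \eqref{eq:intertwine} the rest follows cleanly. Since $G$ is controllable, $W(G)$ is invertible; as $Q$ is orthogonal and hence invertible, \eqref{eq:intertwine} forces $W(H)$ to be invertible as well, so $H\in\mathcal{G}_n$. Solving \eqref{eq:intertwine} gives $Q=W(G)W^{-1}(H)$, which in particular shows $Q$ is \emph{determined} by $G$ and $H$ alone---this is exactly the uniqueness claim, since any admissible $Q$ must satisfy \eqref{eq:intertwine} and hence equals this one fixed matrix. Rationality is then immediate: both $A(G)$ and $A(H)$ are integer (indeed $0/1$) matrices, so $W(G)$ and $W(H)$ have integer entries, whence $W^{-1}(H)$ has rational entries by Cramer's rule and $Q=W(G)W^{-1}(H)$ is rational.

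I do not anticipate a genuinely hard step here; the whole proof hinges on recognizing the intertwining identity \eqref{eq:intertwine}, after which invertibility, uniqueness, and rationality drop out in turn. The one point requiring care is the induction establishing \eqref{eq:intertwine}: it uses both the regularity $Qe=e$ and the similarity $A(G)Q=QA(H)$, and it is essential that $G$ (not merely $H$) be controllable so that $W(G)$ is the invertible factor---this is what pins down $Q$ uniquely rather than merely constraining it. It is worth noting for later sections that this argument breaks down precisely when $W(G)$ is singular, i.e.\ in the non-controllable case, which motivates the finer analysis of almost controllable graphs to follow.
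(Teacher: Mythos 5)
Your proof is correct and follows exactly the standard argument: the paper cites Theorem~\ref{unique} without proof, but the key intertwining identity $W(G)=QW(H)$ you establish is precisely the one the paper itself derives and uses in the proofs of Proposition~\ref{vectxi}(iv) and Theorem~\ref{main}. (One tiny remark: since $Q$ is invertible, controllability of either $G$ or $H$ would pin down $Q$ equally well; it is not specifically $W(G)$ that must be the invertible factor.)
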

A graph $G$ is \emph{determined by the generalized spectrum} (DGS for short) if, any graph generalized cospectral with $G$ is isomorphic to $G$. For controllable graphs $G$, using Theorem \ref{unique} as the starting point, Wang \cite{wang2013EJC,wang2017JCTB} obtained some sufficient conditions for $G$ to be DGS. The overall idea can be described as follows.

Let $\RO_n(\mathbb{Q})$ be the group consisting of all rational regular orthogonal $n\times n$ matrices, and $\S_n$ be the group of   $n\times n$ permutation matrices. Note that for any $Q\in \S_n$, the graph with  adjacency matrix $Q^\T A(G)Q$ is isomorphic to $G$. So in order to show that a graph $G\in \mathcal{G}_n$ is DGS, it suffices to show that any $Q\in \RO_n(\mathbb{Q})$ such that each entry  $Q^\T A(G)Q$ belongs to $\{0,1\}$, is a permutation matrix. Such an argument uses essentially the rationality of $Q$ and hence relies heavily on the controllability assumption of $G$.

The key finding of this paper is that there exists a particular subset of noncontrollable graphs for which the corresponding $Q$ must be rational. We will describe this particular subset in Sec.~3;  such graphs are called to be almost controllable. The first main result is Theorem \ref{main}, which, roughly speaking, extends the rationality in Theorem \ref{unique} to almost controllable graphs. In Sec.~4, we introduce a specific family $\mathcal{F}_n$ from the set of symmetric and almost controllable graphs.  The second main result is Theorem \ref{mainapp}, which presents a sufficient condition for a graph $G\in \mathcal{F}_n$  to be DGS.  Using Theorem \ref{main} as the new starting point, the full proof of Theorem \ref{mainapp} is given in Sec.~5. Some examples are given to illustrate  Theorem \ref{mainapp} in the final section.
\section{Preliminaries}
In this section, we cite some known results that will be used later in the paper. We begin with the well-known Binet-Cauchy formula.
\begin{lemma}\cite[pp. 8--9]{Gantmacher}\label{Binet-Cauchy}
Suppose that a square matrix $C=(c_{ij})_{m\times m}$ is the product of two rectangular matrices $A=(a_{ik})_{m\times n}$ and $B=(b_{kj})_{n\times m}$, i.e.,
$c_{ij}=\sum_{s=1}^na_{is}b_{sj}$ $(i,j=1,2,\dots,m)$. Then the determinant of $C$ is the sum of the products of all possible minors of the maximal ($m$-th) order of $A$ into the corresponding minors of the same order of $B$, i.e.,
$$\begin{vmatrix}
  c_{11}&c_{12}&\cdots&c_{1m}\\
  c_{21}&c_{22}&\cdots&c_{2m}\\
  \vdots&\vdots&&\vdots\\
  c_{m1}&c_{m2}&\cdots&c_{mm}\\
\end{vmatrix}=\sum_{1\le k_1<k_2<\cdots<k_m\le n}\begin{vmatrix}
  a_{1k_1}&a_{1k_2}&\cdots&a_{1k_m}\\
  a_{2k_1}&a_{2k_2}&\cdots&a_{2k_m}\\
  \vdots&\vdots&&\vdots\\
  a_{mk_1}&a_{mk_2}&\cdots&a_{mk_m}\\
\end{vmatrix}\begin{vmatrix}
  b_{k_1 1}&b_{k_1 2}&\cdots&b_{k_1 m}\\
  b_{k_2 1}&b_{k_2 2}&\cdots&b_{k_2 m}\\
  \vdots&\vdots&&\vdots\\
  b_{k_m 1}&b_{k_m 2}&\cdots&b_{k_m m}\\
\end{vmatrix}.$$
\end{lemma}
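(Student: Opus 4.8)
The statement is the classical Binet--Cauchy formula (quoted from \cite{Gantmacher}); the proof I would give rests entirely on the fact that the determinant, viewed as a function of the \emph{columns} of a matrix, is multilinear and alternating, and uses nothing about $A$ and $B$ beyond the relation $C=AB$. Throughout write $\alpha_s=(a_{1s},\dots,a_{ms})^{\T}$ for the $s$-th column of $A$. The hypothesis $c_{ij}=\sum_{s=1}^{n}a_{is}b_{sj}$ says precisely that the $j$-th column of $C$ is the linear combination $\sum_{s=1}^{n}b_{sj}\alpha_s$ of the columns of $A$. We may assume $m\le n$: if $m>n$ the right-hand sum is empty, while $\det C=0$ because $\r C\le \r A\le n<m$, so the identity holds trivially.

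First I would expand $\det C=\det[\,c_{\cdot1},\dots,c_{\cdot m}\,]$ by multilinearity in its $m$ columns, which gives
\[
\det C=\sum_{s_1=1}^{n}\cdots\sum_{s_m=1}^{n} b_{s_11}\cdots b_{s_mm}\,\det[\alpha_{s_1},\dots,\alpha_{s_m}].
\]
Since the determinant is alternating, every term in which two of the indices $s_1,\dots,s_m$ coincide vanishes, so the summation effectively runs only over $m$-tuples $(s_1,\dots,s_m)$ of pairwise distinct indices drawn from $\{1,\dots,n\}$.

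Next I would group these surviving tuples by their underlying (unordered) index set. Fix an $m$-subset $S=\{t_1<t_2<\cdots<t_m\}\subseteq\{1,\dots,n\}$; the tuples with underlying set $S$ are exactly those of the form $s_i=t_{\pi(i)}$ for a permutation $\pi$ of $\{1,\dots,m\}$. For such a tuple, reordering the columns back into increasing order yields $\det[\alpha_{s_1},\dots,\alpha_{s_m}]=\operatorname{sgn}(\pi)\,\det[\alpha_{t_1},\dots,\alpha_{t_m}]$, while the coefficient is $\prod_{i=1}^{m}b_{t_{\pi(i)},i}$. Hence the total contribution of $S$ equals
\[
\det[\alpha_{t_1},\dots,\alpha_{t_m}]\sum_{\pi}\operatorname{sgn}(\pi)\prod_{i=1}^{m}b_{t_{\pi(i)},i}.
\]
The prefactor is exactly the maximal minor of $A$ on columns $t_1,\dots,t_m$, and the remaining sum is, by the Leibniz expansion, exactly the minor of $B$ on rows $t_1,\dots,t_m$ (with all $m$ columns). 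Summing these contributions over all $m$-subsets $S$ reproduces the asserted formula.

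There is no conceptual difficulty here; the one point demanding genuine care is the sign bookkeeping in the previous step. One must verify that the \emph{same} permutation $\pi$ that sorts the chosen columns of $A$ is the one indexing the Leibniz expansion of the $B$-minor, and that $\operatorname{sgn}(\pi)$ enters both with the same sign, so that the two contributions reinforce rather than cancel. As a backup route I would keep the block-matrix argument: evaluate the determinant of
\[
\begin{pmatrix} I_n & B\\ -A & 0\end{pmatrix}
\]
in two ways. A unit-determinant row operation clearing $-A$ turns the lower-right block into $AB$, showing the determinant equals $\det(AB)$; expanding instead by the generalized Laplace rule along the last $m$ rows (whose right-hand block is zero, forcing the selected columns into $\{1,\dots,n\}$) reorganizes it into the same sum of products of minors. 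This, however, merely relocates the sign bookkeeping into the Laplace expansion, so the multilinearity proof above is the cleaner option.
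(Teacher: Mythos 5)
The paper offers no proof of this lemma: it is quoted verbatim from Gantmacher and used as a black box, so there is nothing to compare your argument against. Your multilinearity proof is the standard one and is correct — the expansion of $\det C$ over all $m$-tuples of column indices, the vanishing of terms with repeated indices, and the regrouping by underlying $m$-subset with the sign $\operatorname{sgn}(\pi)$ matching the Leibniz expansion of the corresponding minor of $B$ all check out, and you correctly dispose of the degenerate case $m>n$.
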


For a graph $G$, an eigenvalue is
\emph{main} if it has an associated eigenvector not orthogonal to the all-ones vector $e$.  The \emph{main polynomial} is $m_G(x)=(x-\mu_1)(x-\mu_2)\cdots(x-\mu_m)$, where $\mu_1,\ldots,\mu_m$ are all main eigenvalues of $G$. It is known \cite{rowlinson2007AADM} that $m_G(x)\in \mathbb{Z}[x]$. The next four lemmas state some properties of the walk matrix of a graph.

For an integer matrix $M$ and a prime $p$, the $p$-rank of $M$, denote by $\pr M$, is the rank of $M$ over finite field $\mathbb{F}_p$. We use $\r M$ to denote the ordinary rank of $M$, that is, the rank of $M$ over $\mathbb{Q}$.
\begin{lemma}\cite{hagos2002} \label{rankmain}
	The rank of the walk matrix of a graph $G$ is equal to the number of its main eigenvalues.
\end{lemma}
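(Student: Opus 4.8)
The plan is to compute the column space of $W(G)$ explicitly using the spectral decomposition of the symmetric matrix $A=A(G)$. Write $A=\sum_{i=1}^{d}\lambda_i P_i$, where $\lambda_1,\dots,\lambda_d$ are the \emph{distinct} eigenvalues of $A$ and the $P_i$ are the orthogonal projections onto the corresponding eigenspaces, so that $P_iP_j=\delta_{ij}P_i$ and $\sum_i P_i=I$. The first thing I would record is that $\lambda_i$ is a main eigenvalue precisely when $P_ie\neq 0$: the eigenspace of $\lambda_i$ contains a vector not orthogonal to $e$ if and only if the orthogonal projection of $e$ onto that eigenspace is nonzero. After reindexing, suppose $\lambda_1,\dots,\lambda_m$ are the main eigenvalues, so that $P_ie\neq0$ for $i\le m$ and $P_ie=0$ for $i>m$.

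Next I would use $A^k=\sum_i\lambda_i^kP_i$ to write each column of $W(G)$ as
\[
A^ke=\sum_{i=1}^{d}\lambda_i^kP_ie=\sum_{i=1}^{m}\lambda_i^k\,(P_ie),\qquad k=0,1,\dots,n-1.
\]
Since the nonzero vectors $P_1e,\dots,P_me$ lie in mutually orthogonal subspaces, they are linearly independent and span an $m$-dimensional space $U$. Every column of $W(G)$ lies in $U$, which immediately gives $\r W(G)\le m$.

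For the reverse inequality I would exhibit $m$ independent columns. Collecting the first $m$ columns, the displayed identity reads $[e,Ae,\dots,A^{m-1}e]=[P_1e,\dots,P_me]\,V$, where $V=(\lambda_i^{\,k})_{1\le i\le m,\ 0\le k\le m-1}$ is a Vandermonde matrix. Because $\lambda_1,\dots,\lambda_m$ are distinct, $V$ is invertible, and since $[P_1e,\dots,P_me]$ has full column rank $m$, the product has rank $m$. Hence $\r W(G)\ge m$, and combining the two bounds yields $\r W(G)=m$, the number of main eigenvalues.

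The argument is short, so there is no single hard step; the point that requires care is the equivalence between $\lambda_i$ being main and $P_ie\ne0$, together with the observation that the vectors $P_ie$ with $i\le m$ are automatically orthogonal and hence independent. Once the columns of $W(G)$ are expressed through the Vandermonde factorization above, both inequalities fall out, the distinctness of the eigenvalues being exactly what makes the Vandermonde factor invertible.
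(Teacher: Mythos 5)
Your proof is correct; the paper itself only cites this result from Hagos without reproducing a proof, and your spectral-decomposition argument (main eigenvalues $\lambda_i$ characterized by $P_ie\neq 0$, orthogonality of the projections $P_ie$ giving the upper bound, and the Vandermonde factorization giving the lower bound) is exactly the standard proof found in the cited source. No gaps.
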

\begin{lemma}\cite{hagos2002} \label{intcom}
	Let $r=\r W(G)$. Then $m_G(A)e=0$ and hence $A^{r}e$ can be written as a linear combination of $e,Ae,\ldots,A^{r-1}e$ with integral coefficients.
\end{lemma}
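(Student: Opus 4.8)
The plan is to establish $m_G(A)e=0$ through the spectral decomposition of $A$ and then read off the integral recurrence from the fact that $m_G(x)$ is a monic integer polynomial. First I would write $A=\sum_{i=1}^{k}\lambda_iE_i$, where $\lambda_1,\dots,\lambda_k$ are the distinct eigenvalues of $A$ and $E_i$ is the orthogonal projection onto the $\lambda_i$-eigenspace, so that $\sum_iE_i=I$ and $p(A)=\sum_ip(\lambda_i)E_i$ for every polynomial $p$. Decomposing the all-ones vector gives $e=\sum_{i=1}^{k}E_ie$. By definition, $\lambda_i$ is a main eigenvalue exactly when its eigenspace is not orthogonal to $e$, that is, when $E_ie\ne 0$; hence the nonzero terms in this sum are precisely those indexed by the main eigenvalues $\mu_1,\dots,\mu_m$, and $e=\sum_{j=1}^{m}E_{\mu_j}e$ lies in the span of eigenvectors belonging to main eigenvalues.

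Next I would apply $m_G(A)=\prod_{j=1}^{m}(A-\mu_jI)$ to this decomposition. Since each $E_{\mu_l}e$ is a $\mu_l$-eigenvector, we have $m_G(A)E_{\mu_l}e=\big(\prod_{j=1}^{m}(\mu_l-\mu_j)\big)E_{\mu_l}e$, and the factor with $j=l$ vanishes; therefore $m_G(A)E_{\mu_l}e=0$ for every $l$ and, summing over $l$, $m_G(A)e=0$. As a consistency remark, this also identifies $m_G$ as the minimal polynomial of $A$ relative to the cyclic vector $e$, since $\deg m_G=m$ coincides with the dimension of $\span\{e,Ae,A^2e,\dots\}$.

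Finally I would translate the identity into the stated recurrence. By Lemma~\ref{rankmain}, $r=\r W(G)$ equals the number $m$ of main eigenvalues, so $\deg m_G=r$. Writing $m_G(x)=x^{r}+c_{r-1}x^{r-1}+\cdots+c_1x+c_0$ with all $c_i\in\mathbb{Z}$ — which is legitimate because $m_G(x)\in\mathbb{Z}[x]$ is monic, as already recorded in the excerpt — the relation $m_G(A)e=0$ becomes $A^{r}e=-c_{r-1}A^{r-1}e-\cdots-c_1Ae-c_0e$, exhibiting $A^{r}e$ as an integral linear combination of $e,Ae,\dots,A^{r-1}e$. The spectral argument for $m_G(A)e=0$ is routine; the only point requiring care is that integrality of the coefficients does not follow from $m_G(A)e=0$ in isolation but rests on two inputs working together, namely the cited fact $m_G(x)\in\mathbb{Z}[x]$ and the rank identity $r=m$ of Lemma~\ref{rankmain}, which guarantees that the annihilating relation uses exactly the first $r$ powers of $A$ applied to $e$ and hence produces a genuine expression for $A^r e$.
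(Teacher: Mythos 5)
Your proof is correct. The paper cites this lemma from Hagos without supplying a proof, and your argument — decomposing $e$ over the spectral projections to get $m_G(A)e=0$, then invoking the recorded facts that $m_G(x)\in\mathbb{Z}[x]$ is monic and that $\deg m_G=m=r$ by Lemma~\ref{rankmain} to read off the integral recurrence for $A^{r}e$ — is the standard derivation and checks out in every step.
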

Lemma  \ref{intcom} clearly indicates that the first $r$ columns constitute a basis for the column space of $W(G)$ over $\mathbb{Q}$. Similar conclusion also holds over finite field $\mathbb{F}_p$; a proof can be found in \cite{qiu2019DM}.
\begin{lemma}\cite{qiu2019DM}\label{first}
	Let $G$ be a graph and $p$ be a prime. Let $r = \pr W(G)$. Then the first $r$ columns of $W(G)$ constitute a basis for the column space of  $W(G)$ over $\mathbb{F}_p$.
\end{lemma}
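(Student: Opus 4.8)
The plan is to exploit the cyclic (Krylov) structure of the walk matrix, carrying out over $\mathbb{F}_p$ the same reasoning that underlies the rational statement of Lemma \ref{intcom}. Write $\bar A$ and $\bar e$ for the reductions of $A$ and $e$ modulo $p$, so that the columns of $W(G)$ over $\mathbb{F}_p$ are $\bar e,\bar A\bar e,\ldots,\bar A^{\,n-1}\bar e$. For $0\le k\le n$ set $V_k=\span\{\bar e,\bar A\bar e,\ldots,\bar A^{\,k-1}\bar e\}\subseteq \mathbb{F}_p^n$, so that $V_0\subseteq V_1\subseteq\cdots\subseteq V_n$ is an ascending chain whose final term $V_n$ is precisely the column space of $W(G)$ over $\mathbb{F}_p$, of dimension $r=\pr W(G)$.

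The decisive step is a stabilization property: if $\bar A^{\,k}\bar e\in V_k$ for some $k$, then $V_{k+1}=V_k$ and in fact $\bar A^{\,j}\bar e\in V_k$ for every $j\ge k$. To see this, suppose $\bar A^{\,k}\bar e=\sum_{i=0}^{k-1}c_i\bar A^{\,i}\bar e$ with $c_i\in\mathbb{F}_p$, and multiply through by $\bar A$ to obtain $\bar A^{\,k+1}\bar e=\sum_{i=0}^{k-1}c_i\bar A^{\,i+1}\bar e$. Every term on the right lies in $V_{k+1}=V_k$ (the top term $\bar A^{\,k}\bar e$ lies in $V_k$ by hypothesis), so $\bar A^{\,k+1}\bar e\in V_k$; an easy induction then gives $\bar A^{\,j}\bar e\in V_k$ for all $j\ge k$. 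I expect this closure-under-multiplication-by-$\bar A$ step to be the only point requiring real care, and its correctness rests solely on $\mathbb{F}_p$ being a field and $\bar A$ a linear operator, so the reasoning is identical to the rational case.

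From the stabilization property it follows that the chain is strictly increasing up to the first index $s$ at which it stabilizes, and constant thereafter: each strict inclusion $V_{k-1}\subsetneq V_k$ raises the dimension by exactly one (since $V_k$ is obtained from $V_{k-1}$ by adjoining the single vector $\bar A^{\,k-1}\bar e$), while once equality occurs it persists. Noting that $\bar e\neq 0$ in $\mathbb{F}_p^n$ (each entry is $1$), we get $\dim V_k=\min(k,s)$, whence $r=\dim V_n=s$.

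Finally, $V_r=V_n$ is the whole column space and has dimension $r$, yet it is spanned by the $r$ vectors $\bar e,\bar A\bar e,\ldots,\bar A^{\,r-1}\bar e$; hence these $r$ vectors must be linearly independent over $\mathbb{F}_p$ and therefore form a basis of the column space. As they are exactly the first $r$ columns of $W(G)$, the lemma follows.
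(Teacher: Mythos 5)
Your proof is correct. The paper does not actually reproduce a proof of this lemma (it only points to \cite{qiu2019DM}), but your argument --- the stabilization of the Krylov chain $\span\{\bar e,\bar A\bar e,\ldots,\bar A^{\,k-1}\bar e\}$ under multiplication by $\bar A$, which forces the chain to grow by exactly one dimension at each step until it stops --- is the standard and expected route, being precisely the $\mathbb{F}_p$-analogue of the reasoning behind Lemma~\ref{intcom}; no gaps.
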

\begin{lemma}\cite{wang2006PHD,wang2013EJC}\label{r2up}
	For any graph with $n$ vertices,
	$\br W(G)\le \lceil\frac{n}{2}\rceil$.
\end{lemma}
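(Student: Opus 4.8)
The plan is to pass to the field $\mathbb{F}_2$ and exploit that, over $\mathbb{F}_2$, the adjacency matrix $A=A(G)$ is symmetric with zero diagonal, i.e. alternating. The crux is to show that every ``walk count'' $s_k:=e^\T A^k e$ vanishes modulo $2$ for $k\ge 1$; this forces the Gram matrix $W^\T W$ of the walk matrix to be almost entirely zero over $\mathbb{F}_2$, and the rank bound then drops out of the elementary fact that a totally isotropic subspace for a nondegenerate form on $\mathbb{F}_2^n$ has dimension at most $\lfloor n/2\rfloor$.

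First I would treat odd and even $k$ separately. For $k=2m+1$, writing $v=A^m e$ gives $s_k=v^\T A v=0$ over $\mathbb{F}_2$, since $A$ is alternating. For $k=2m$, the Frobenius identity $t^2=t$ on $\mathbb{F}_2$ yields
\[
s_{2m}=(A^m e)^\T(A^m e)=\sum_{i=1}^n\big((A^m e)_i\big)^2=\sum_{i=1}^n (A^m e)_i=e^\T A^m e=s_m \pmod 2 .
\]
Starting from $s_1=0$ and combining $s_{2m}=s_m$ with the vanishing of all odd-index terms, a short induction (write $k=2^ab$ with $b$ odd and reduce $s_k$ to $s_b$ or to $s_1$, both zero) gives $s_k\equiv 0\pmod 2$ for every $k\ge 1$, while $s_0=e^\T e=n\bmod 2$.

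Now over $\mathbb{F}_2$ the $(i,j)$ entry of $W^\T W$ is $e^\T A^{i+j}e=s_{i+j}$, which is $0$ whenever $(i,j)\ne(0,0)$ and equals $n\bmod 2$ at $(0,0)$. Let $U$ be the column space of $W=W(G)$ over $\mathbb{F}_2$, so $r:=\br W=\dim U$. By Lemma~\ref{first} the first $r$ columns $e,Ae,\dots,A^{r-1}e$ form a basis of $U$, and in this basis the standard bilinear form $\beta(x,y)=x^\T y$ has Gram matrix equal to the leading $r\times r$ block of $W^\T W$, namely $\diag(n\bmod 2,0,\dots,0)$. Hence the radical $U\cap U^\perp$ of $\beta|_U$ has dimension $r-(n\bmod 2)$. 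Since $\beta$ is nondegenerate on $\mathbb{F}_2^n$ and $U\cap U^\perp$ is totally isotropic, $\dim(U\cap U^\perp)\le\lfloor n/2\rfloor$, and therefore
\[
r=\dim(U\cap U^\perp)+(n\bmod 2)\le \Big\lfloor\tfrac n2\Big\rfloor+(n\bmod 2)=\Big\lceil\tfrac n2\Big\rceil ,
\]
as claimed.

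I expect the main obstacle to be the first step: recognizing that the walk counts $e^\T A^k e$ all collapse to $0$ modulo $2$, and in particular pairing the alternating identity (odd $k$) with the Frobenius squaring identity (even $k$). Once this is in hand, the reduction to an isotropic-subspace dimension count is routine, and phrasing it through the radical $U\cap U^\perp$ is precisely what makes the bound come out as $\lceil n/2\rceil$ uniformly in the parity of $n$, rather than losing a $+1$ in the even case.
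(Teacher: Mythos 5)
Your proof is correct. Note that the paper itself does not prove this lemma; it is imported from \cite{wang2006PHD,wang2013EJC}. The argument there rests on exactly the two ingredients you isolate: first, that $e^\T A^k e$ is even for all $k\ge 1$ (this is the paper's Lemma~\ref{even}, which you reprove correctly via the alternating identity for odd $k$ and the Frobenius identity $t^2\equiv t$ for even $k$); second, that consequently $W^\T W$ reduces mod $2$ to a matrix supported only in the $(1,1)$ entry, so $\br (W^\T W)\le 1$. The cited proof then finishes with Sylvester's rank inequality, $\br(W^\T W)\ge 2\,\br W - n$, giving $\br W\le\lceil n/2\rceil$ directly; your finish instead passes to the column space $U$, computes the radical of the standard form on $U$, and invokes the bound $\dim T\le\lfloor n/2\rfloor$ for totally isotropic $T\subseteq\mathbb{F}_2^n$. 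These two endgames are equivalent in content (the isotropic-subspace bound is the geometric face of the same rank count), and your version has the mild advantage of making transparent why the bound is $\lceil n/2\rceil$ uniformly in the parity of $n$; it does use Lemma~\ref{first} to identify a basis of $U$, which the Sylvester route avoids, though that reliance is harmless since only the existence of \emph{some} basis with the stated Gram matrix is needed and any basis of $U$ consists of $\mathbb{F}_2$-combinations of columns of $W$.
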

\begin{lemma}\cite{wang2006PHD,wang2013EJC}\label{even}
	For any graph with $n$ vertices, $e^\T A^k(G)e$ is even for any positive integer $k$.
\end{lemma}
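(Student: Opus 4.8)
The plan is to prove the claim by descent on the exponent $k$, treating the odd and even cases by two different elementary devices and reducing every even exponent to a strictly smaller one. Throughout I write $A=A(G)$, and the only structural fact I need is that $A$ is symmetric with zero diagonal.

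First I would dispose of the odd case. For any integer vector $v$ one has
$$v^\T A v=\sum_{i,j}A_{ij}v_iv_j=\sum_i A_{ii}v_i^2+2\sum_{i<j}A_{ij}v_iv_j\equiv 0 \pmod 2,$$
the first sum vanishing because $A_{ii}=0$ and the second being manifestly even. Now if $k=2m+1$ is odd, then, using that $A^m$ is symmetric,
$$e^\T A^k e=(A^m e)^\T A\,(A^m e)=v^\T A v\quad\text{with } v=A^m e,$$
which is even by the displayed congruence. (Combinatorially this just says that walks of odd length admit no palindrome — the middle step would force two equal adjacent vertices — so the reversal map pairs them off without fixed points.)

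For the even case write $k=2m$ with $m\ge 1$. Since $A$ is symmetric, $A^{2m}=(A^m)^\T A^m$, so
$$e^\T A^{2m}e=(A^m e)^\T(A^m e)=\sum_{i=1}^n\big((A^m e)_i\big)^2\equiv\sum_{i=1}^n (A^m e)_i=e^\T A^m e \pmod 2,$$
where I used $x^2\equiv x\pmod 2$ for every integer $x$. Thus the parity of $e^\T A^{2m}e$ equals that of $e^\T A^m e$, an instance of the claim with the strictly smaller exponent $m\ge 1$. Feeding this into an induction on $k$, whose base cases are the odd exponents already settled, closes the argument: by repeated halving every $k$ is reduced to its odd part, where evenness is known.

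The only real subtlety is that the clean identity $e^\T A^k e=v^\T A v$ is available only when $k$ is odd; for even $k$ one cannot express the quantity as $v^\T A v$ and must instead exploit the square-sum congruence to step down to a smaller exponent. Once one observes that squaring is harmless modulo $2$, both the descent and the induction are routine.
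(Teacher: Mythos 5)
Your proof is correct: the zero-diagonal argument $e^\T A^{2m+1}e=v^\T Av\equiv 0\pmod 2$ for the odd exponents, combined with the descent $e^\T A^{2m}e=\lVert A^m e\rVert^2\equiv e^\T A^m e\pmod 2$ for the even ones, is exactly the standard proof of this lemma. The paper itself only cites the result from \cite{wang2006PHD,wang2013EJC} without reproducing a proof, and your argument coincides with the one given there, so there is nothing to flag.
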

Recall that an $n\times n$ matrix $U$ with integer entries is called \emph{unimodular} if $\det U = \pm 1$. Two $n\times n$ integral matrices $M_1$ and $M_2$ are called \emph{unimodular equivalent} if there exist two unimodular matrices $U$ and $V$ such that $M_1=UM_2V$. It is well known that  every integral  matrix $M$ is unimodular equivalent to some diagonal matrix $S=\diag(d_1,d_2,\ldots,d_n)$,  where $d_1, d_2, \ldots, d_n$ are nonnegative integers with $d_i | d_{i+1}$ for $i = 1, 2, ..., n-1$. The matrix $S$ is called the \emph{Smith normal form} (SNF for short) of $M$. The diagonal entries $d_1,d_2,\ldots,d_n$ are the \emph{invariant factors} of $M$. They are unique and, in fact, $d_i={D_i}/{D_{i-1}}$ for $i=1,2,\ldots,r$, where $D_0=1$ and $D_i$ is the $i$-th \emph{determinant divisor} of $M$, i.e., the greatest common divisor of all $i\times i$ minors of $M$. We note that many basic properties of a matrix can be obtained from its invariant factors, as described below.
\begin{fact}\label{basicsnf}
	Let $M$ be an $n\times n$ integral nonzero matrix with invariant factors $d_1,d_2,\ldots,d_n$. Let $p$ be any prime. Then the followings hold.
	
	\noindent	\textup{(\romannumeral1)}	 $\det M=\pm d_1 d_2\cdots d_n.$
	
	\noindent	\textup{(\romannumeral2)}
	$\pr M=\max\{i\colon\, d_i\not\equiv 0\pmod{p}\}$ and $\r M=\max\{i\colon\,d_i\neq 0\}$.
	
	\noindent	\textup{(\romannumeral3)}
	$p^{n-\pr M}\mid \det M$.
	
	\noindent	\textup{(\romannumeral4)}
	$Mx\equiv 0\pmod{p^2}$ has a solution $x\not\equiv 0\pmod{p}$ if and only if $p^2\mid d_n$.
\end{fact}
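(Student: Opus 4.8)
The plan is to reduce every assertion to the diagonal case. Write $M=USV$ with $U,V$ unimodular, so that $S=\diag(d_1,\ldots,d_n)$ is the SNF of $M$ and the invariant factors satisfy the chain $d_1\mid d_2\mid\cdots\mid d_n$. The one structural fact I would use throughout is that this chain forces every vanishing condition on the $d_i$ to be a \emph{prefix} condition: if $d_i$ is zero (resp.\ divisible by $p$, resp.\ divisible by $p^2$) then so is $d_{i+1}$. Granting this, (\romannumeral1) is immediate from multiplicativity of the determinant: since $\det U=\det V=\pm1$, we get $\det M=\det U\cdot\det S\cdot\det V=\pm\prod_{i=1}^n d_i$. (Equivalently, $D_n$ is the unique $n\times n$ minor up to sign, and $D_n=d_1\cdots d_n$.)

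For (\romannumeral2) I would invoke the invariance of rank under multiplication by a matrix invertible over the relevant field. Over $\mathbb{Q}$ the unimodular factors $U,V$ are invertible, so $\r M=\r S$, and the rank of a diagonal matrix is the number of nonzero diagonal entries; by the prefix property these are exactly $d_1,\ldots,d_r$ with $r=\max\{i:d_i\neq0\}$. Over $\mathbb{F}_p$ the same argument applies because $\det U,\det V=\pm1$ are units modulo $p$, hence $U,V$ are invertible over $\mathbb{F}_p$ and $\pr M=\pr S$ equals the number of $d_i$ with $p\nmid d_i$; the prefix property again identifies this count with $\max\{i:d_i\not\equiv0\pmod p\}$. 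Part (\romannumeral3) then follows by combining (\romannumeral1) and (\romannumeral2): exactly $n-\pr M$ of the invariant factors are divisible by $p$, so their product, and hence $\det M=\pm\prod_i d_i$, is divisible by $p^{\,n-\pr M}$ (the case $\det M=0$ being trivial).

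The main work, and the main obstacle, is (\romannumeral4), since here the modulus is $p^2$ rather than a field. I would change variables via $y=Vx$. As $\det V=\pm1$ is a unit both modulo $p$ and modulo $p^2$, the map $x\mapsto Vx$ is a bijection of $(\mathbb{Z}/p^2)^n$ that preserves the condition $x\not\equiv0\pmod p$; similarly, left multiplication by the invertible $U$ does not affect solvability modulo $p^2$. Hence $Mx\equiv0\pmod{p^2}$ has a solution $x\not\equiv0\pmod p$ if and only if the diagonal system $d_iy_i\equiv0\pmod{p^2}$ $(1\le i\le n)$ has a solution $y\not\equiv0\pmod p$. A coordinate $y_j$ may be taken $\not\equiv0\pmod p$ precisely when $p^2\mid d_j$: if $p^2\nmid d_j$, then $p\nmid y_j$ would make $y_j$ invertible modulo $p^2$, forcing $d_j\equiv0\pmod{p^2}$, a contradiction. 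Finally, by the prefix property some $d_j$ is divisible by $p^2$ if and only if $d_n$ is, which yields the stated equivalence (for the ``if'' direction one simply takes $y$ to be the $n$-th standard basis vector). The delicate point throughout (\romannumeral4) is that all inverses must be taken modulo $p^2$, not over $\mathbb{Q}$; this is exactly what the unit determinant of $U$ and $V$ guarantees.
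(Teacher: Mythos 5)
Your proof is correct and complete. The paper itself gives no proof of this fact: parts (\romannumeral1)--(\romannumeral3) are treated as standard, and for part (\romannumeral4) the authors simply refer the reader to [wang2013EJC]. Your argument --- decomposing $M=USV$, using the divisibility chain $d_1\mid d_2\mid\cdots\mid d_n$ to turn every vanishing condition into a prefix condition, and in (\romannumeral4) exploiting that $U,V$ remain invertible modulo $p^2$ so the system reduces to the diagonal one --- is exactly the standard route one would find in that reference, and all the delicate points (inverting $y_j$ modulo $p^2$ rather than over a field, and passing from $p^2\mid d_j$ to $p^2\mid d_n$ via $d_j\mid d_n$) are handled correctly.
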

We remark that the last assertion will play a key role in Sec. \ref{oddp}. We refer to \cite{wang2013EJC} for a proof of this assertion.
\section{Almost controllable graphs}
We call a graph $G$ with $n$ vertices \emph{almost controllable} if the walk matrix $W(G)$ has rank $n-1$. Note that for any graph $G$, the number of orbits of $V(G)$ under the automorphism group $Aut(G)$ is at least $\r W(G)$ since two vertices in the same orbit have the same row vector in $W(G)$. Thus, if $G$ is  almost controllable, then $G$ has either $n-1$ or $n$ orbits under its automorphism group.   We use $\mathcal{H}_n$ to denote the set of all almost controllable graphs of order $n$.  The set $\mathcal{H}_n$ has a natural bipartition according to numbers of orbits.  We use  $\mathcal{H}^a_n$ to collect graphs in $\mathcal{H}_n$ which has $n$ orbits, that is, asymmetric graphs in $\mathcal{H}_n$. The set of remaining graphs in $\mathcal{H}_n$ are denoted by $\mathcal{H}^s_n$. Table 1 records the sizes of $\mathcal{H}_n^a$ and $\mathcal{H}_n^s$ for small $n$. For a graph $G$ in $\mathcal{H}^s_n$, we call two vertices $\tau$ and $\tau'$ twin vertices, or twins, if $\{\tau,\tau'\}$ is an orbit of $G$. Note that each graph in $\mathcal{H}^s_n$ has exactly a pair of twins.

\begin{table}[htbp]
	\footnotesize
	\centering
	\caption{\label{computer} Almost controllable graphs of small order}
	\begin{tabular}{lccccccccccccccc}
		\toprule
		Order $n$ of graphs &2&&3 && 4&&5&&6&&7&&8&&9\\
		$\#$ of  graphs   & 2&&4&&11&& 34&& 156&&1044&& 12346&&274668\\
		$|\mathcal{H}_n|$  &2&&2&&2&&6&&22&&214&&3100&&86578\\
		$|\mathcal{H}_n^{a}|$ & 0&&0&&0&&0&&0&&42&&926&&36552\\
	$|\mathcal{H}_n^{s}|$ &2&&2&&2&&6&&22&&172&&2174&&50026\\
		\bottomrule
	\end{tabular}
\end{table}

The following definition based on walk matrix $W(G)$ is crucial for almost controllable graphs.
\begin{definition}\label{defxi}
	Let $G\in \mathcal{H}_n$. Write
 \begin{equation*}
\xi(G)=\left(\begin{matrix}
W_{1n}(G)\\
W_{2n}(G)\\
\vdots\\
W_{nn}(G)\\
\end{matrix}
\right) \text{~and~} W_\delta(G)=[e,A(G)e,\ldots,A^{n-2}(G)e,(-1)^\delta\frac{1}{2^{\lfloor\frac{n}{2}\rfloor-1}}\cdot \xi(G)], \delta\in\{0,1\},
\end{equation*}
where $W_{i,n}(G)$ denotes the algebraic cofactor of the $(i,n)$-entry for $W(G)$.
\end{definition}
Before we establish the general properties of the new defined matrices $W_0(G)$ and $W_1(G)$, we consider a small example.
\begin{example}\textup{
	Let $G$ be the graph with adjacency matrix
	 \begin{equation*}
	A=\left(\begin{matrix}
0&1&0&0&0\\
	1&0&1&0&0\\
	0&1&0&1&1\\
	0&0&1&0&0\\
	0&0&1&0&0\\
	\end{matrix}
	\right).
	\end{equation*}
Now,  \begin{equation*}
W(G)=[e,Ae,A^2e,A^3e,A^4e]=\left(\begin{matrix}
1&1&2&4&6\\
1&2&4&6&14\\
1&3&4&10&14\\
1&1&3&4&10\\
1&1&3&4&10\\
\end{matrix}
\right),
\end{equation*}
and  $\xi(G)=(0,0,0,2,-2)^\T$. Thus,
 \begin{equation*}
W_0(G)=\left(\begin{matrix}
1&1&2&4&0\\
1&2&4&6&0\\
1&3&4&10&0\\
1&1&3&4&1\\
1&1&3&4&-1\\
\end{matrix}
\right) \text{~and~}
W_1(G)=\left(\begin{matrix}
1&1&2&4&0\\
1&2&4&6&0\\
1&3&4&10&0\\
1&1&3&4&-1\\
1&1&3&4&1\\
\end{matrix}
\right).
\end{equation*}
}	\end{example}
\begin{proposition}\label{vectxi}
	Let $G\in \mathcal{H}_n$. Then $\xi(G)$ given in Definition \ref{defxi} has the following properties.
	
\noindent	\textup{(\romannumeral1)} $\frac{1}{2^{\lfloor\frac{n}{2}\rfloor-1}}\cdot \xi(G)$ is a nonzero integral vector.

\noindent	\textup{(\romannumeral2)} $W^\T (G)\xi(G)=0.$
	
\noindent	\textup{(\romannumeral3)} $\xi^\T (G) \xi(G)=\det V^\T V$ where $V=[e,A(G)e,\ldots,A^{n-2}(G)e]$.
	
\noindent	\textup{(\romannumeral4)} If $H$ is a graph generalized cospectral with $G$, then $H\in \mathcal{H}_n$ and  $\xi^\T (H) \xi(H)=\xi^\T (G) \xi(G)$.
	
\end{proposition}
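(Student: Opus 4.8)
The plan is to establish the four parts largely in sequence, since each builds on the previous ones. For part (i), I would start from the fact that $G \in \mathcal{H}_n$ means $\r W(G) = n-1$, so the last column of $W(G)$ is a rational linear combination of the first $n-1$ columns, which forces $\det W(G) = 0$; expanding this determinant along its last column shows that the cofactors $W_{in}(G)$, i.e.\ the entries of $\xi(G)$, give a vector satisfying the orthogonality relation of part (ii). To show $\xi(G)$ is nonzero, note that since the first $n-1$ columns are linearly independent (by Lemma \ref{intcom}, or directly from $\r W = n-1$), at least one of these maximal minors $W_{in}(G)$ must be nonzero; otherwise the whole matrix $V = [e, Ae, \ldots, A^{n-2}e]$ would have rank less than $n-1$. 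The divisibility claim, that $\xi(G)$ is $2^{\lfloor n/2 \rfloor - 1}$ times an integral vector, is where I expect the first real work: I would invoke Lemma \ref{r2up}, which bounds $\br W(G) \le \lceil n/2 \rceil$, together with Lemma \ref{even} (parity of walk counts). A cofactor $W_{in}(G)$ is, up to sign, the $(n-1) \times (n-1)$ minor of $W(G)$ obtained by deleting row $i$ and the last column; one should show that a high power of $2$ divides every such minor, and the $2$-rank bound is precisely the tool that controls how many invariant factors of the relevant submatrix are even.

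For part (ii), the relation $W^\T(G)\xi(G) = 0$ is really the statement that each row of $W^\T$, i.e.\ each column $A^j e$ of $W$, is orthogonal to the cofactor vector. The $k$-th entry of $W^\T \xi$ equals $\sum_i W_{ik} W_{in}$ (where $W_{ik}$ is the $(i,k)$ entry of $W$, not a cofactor), and for $k \le n-1$ this is exactly the cofactor expansion of the determinant of the matrix obtained from $W(G)$ by replacing its last column with its $k$-th column --- a matrix with a repeated column, hence determinant zero. For $k = n$ it is $\det W(G) = 0$ since $\r W(G) = n-1$. This is the cleanest of the four parts and follows from standard cofactor identities.

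Part (iii) is the heart of the computation and the place where I expect the Binet--Cauchy formula (Lemma \ref{Binet-Cauchy}) to enter, as the abstract promises. I would write $\xi^\T(G)\xi(G) = \sum_{i=1}^n W_{in}(G)^2$, where each $W_{in}(G)$ is (up to sign) a maximal minor of the $n \times (n-1)$ matrix $V = [e, Ae, \ldots, A^{n-2}e]$ obtained by deleting row $i$. The sum of squares of all maximal minors of a tall matrix $V$ is precisely $\det(V^\T V)$ by the Cauchy--Binet formula applied to $V^\T V$: expanding $\det(V^\T V)$ via Lemma \ref{Binet-Cauchy} with $A = V^\T$ and $B = V$ gives $\sum_{k_1 < \cdots < k_{n-1}} (\det M_{k_1 \cdots k_{n-1}})^2$, where $M$ ranges over the $(n-1) \times (n-1)$ submatrices of $V$, and these submatrices are indexed exactly by the choice of which single row to omit. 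Matching signs so that $W_{in}(G) = \pm \det(\text{$V$ with row $i$ deleted})$ then yields the identity immediately.

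Finally, part (iv) should follow by combining the preceding parts with the Johnson--Newman machinery. If $H$ is generalized cospectral with $G$, then by Theorem \ref{genconQ} there is $Q \in \RO_n$ with $Q^\T A(G) Q = A(H)$, and since $Q$ is regular orthogonal it satisfies $Qe = e$, which gives $Q^\T A^j(G) e = A^j(H) e$ for all $j$, i.e.\ $Q^\T W(G) = W(H)$. Because $Q$ is orthogonal this is rank-preserving, so $\r W(H) = \r W(G) = n-1$ and hence $H \in \mathcal{H}_n$. For the invariance of $\xi^\T \xi$, I would use part (iii): writing $V_G$ and $V_H$ for the truncated walk matrices, the relation $Q^\T V_G = V_H$ gives $V_H^\T V_H = V_G^\T Q Q^\T V_G = V_G^\T V_G$, so $\det(V_H^\T V_H) = \det(V_G^\T V_G)$, and by part (iii) both sides equal the respective $\xi^\T \xi$. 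The main obstacle overall is the divisibility statement in part (i); the remaining parts are essentially linear-algebraic identities once the Binet--Cauchy viewpoint on $\xi$ as a vector of maximal minors is in hand.
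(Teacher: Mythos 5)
Your proposal is correct and follows essentially the same route as the paper: part (i) via the bound $2^{(n-1)-\br X_i}\mid\det X_i$ combined with $\br X_i\le\br W(G)\le\lceil n/2\rceil$ from Lemma \ref{r2up} (Lemma \ref{even} is not actually needed here), part (ii) via cofactor identities (the paper packages this as the adjugate relation $W^*W=(\det W)I_n$, which is the same computation), part (iii) via Binet--Cauchy applied to $V^\T V$, and part (iv) via $Q^\T V_G=V_H$ and invariance of $V^\T V$. No gaps.
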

\begin{proof}
Let  $V=[e,A(G)e,
\ldots,A^{n-2}(G)e]$ and we write $X_i$ ($i=1,2,\ldots,n$)  for the matrix obtained from $V$ by removing row $i$.  As $X_i$ is a square matrix of order $n-1$, we have
$$2^{(n-1)-\br X_i}\mid \det X_i.$$
 Noting that $X_i$ is a submatrix of $W(G)$, we have $\br X_i\le \br W(G)$ and hence $\br X_i\le \lceil\frac{n}{2}\rceil$ by Lemma \ref{r2up}. Now,
$(n-1)-\br X_i\ge \lfloor\frac{n}{2}\rfloor-1$ and hence
$2^{\lfloor\frac{n}{2}\rfloor-1}\mid \det X_i$. Recall that the $i$-entry of $\xi(G)$ is the algebraic cofactor $W_{i,n}$ of the $(i,n)$-entry for $W(G)$. Clearly, $W_{i,n}$ is $(-1)^{n+i}\det X_i$ and hence $\frac{1}{2^{\lfloor\frac{n}{2}\rfloor-1}}\cdot \xi(G)$  is an integral vector. Furthermore, as $G\in \mathcal{H}_n$, we have $\r W(G)=n-1$ by the very definition. It follows from Lemma \ref{intcom} that the first $n-1$ columns of $W(G)$ are linearly independent, that is, $\r V=n-1$. Thus, there exists some  $i\in\{1,2,\ldots,n\}$ such that  $X_i$ is nonsingular. This  indicates that  $\xi(G)$ is nonzero and hence (\romannumeral1)  is proved.

Let $W^*(G)$ be the adjoint matrix of $W(G)$. Note that $\xi^\T(G)$ is the last row vector of $W^*(G)$. By the familiar equality $W^*W =(\det W)\cdot I_n$, where $I_n$ is the identity matrix, we easily obtain that
$\xi^\T(G)W(G)=(0,0,\ldots,0,\det W(G))$. Taking transpose and noting that $\det W(G)=0$ as $G\in \mathcal{H}_n$, we have $W^\T(G)\xi(G)=0$ and (\romannumeral2)  is proved.	

Note that $V$ is an  $n\times (n-1)$ matrix and $V^\T$ is an  $(n-1)\times n$ matrix. By the well-known Binet-Cauchy formula in Lemma \ref{Binet-Cauchy}, we have
\begin{equation}
\det V^\T V=\sum_{i=1}^n \det X_i^\T \det X_i.
\end{equation}
On the other hand, as $\xi(G)=(W_{1,n},\ldots,W_{n,n})^\T$ and $W_{i,n}=(-1)^{n+i}\det X_i$, we have
\begin{equation}
\xi^\T(G)\xi(G)=\sum_{i=1}^n W^2_{i,n}=\sum_{i=1}^n \det X_i \det X_i.
\end{equation}
This proves (\romannumeral3) as $\det X_i^\T=\det X_i$ always holds.

As $H$ is generalized cospectral with $G$, it follows from Theorem \ref{genconQ} that there is a matrix $Q\in\RO_n$ such that $Q^\T A(G) Q=A(H)$.  As $Q^\T Q=I_n$ and $Qe=e$, we have
 $A^k(H)e=Q^\T A^k(G)Qe=Q^\T A^k(G)e$ for any $k\ge 0$. Thus $W(H)=Q^\T W(G)$ and hence $\r W(H)=\r W(G)=n-1$, i.e., $H\in \mathcal{H}_n$. Write   $S=[e,A(H)e,\ldots,A^{n-2}(H)e]$. We have $S=Q^\T[e,A(G)e,\ldots,A^{n-2}(G)e]=Q^\T V$ and hence $S^\T S=V^\T V$. Note that $H\in \mathcal{H}_n$. Using (\romannumeral3) for $G$ and $H$, we clearly have $\xi^\T(G)\xi(G)=\xi^\T(H)\xi(H)$. This proves (\romannumeral4).
\end{proof}
\begin{corollary}\label{r2}
	If $G\in \mathcal{H}_n$, then 	$\r W_\delta(G)=n$ for $\delta=0,1$.
\end{corollary}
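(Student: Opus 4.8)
The plan is to exhibit $W_\delta(G)$ as the full-column-rank matrix $V=[e,A(G)e,\ldots,A^{n-2}(G)e]$ with one extra column $\xi(G)$ appended, and to show that this extra column lies outside $\im V$, so that the rank jumps from $n-1$ to $n$. Concretely, write $W_\delta(G)=[\,V \mid c_\delta\,\xi(G)\,]$ with the nonzero scalar $c_\delta=(-1)^\delta 2^{-(\lfloor n/2\rfloor-1)}$. From the proof of Proposition~\ref{vectxi} we already know $\r V=n-1$, i.e.\ $V$ has full column rank; the only remaining task is therefore to rule out $\xi(G)\in\im V$.

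The key observation is that $\xi(G)$ is orthogonal to every column of $V$. Indeed, Proposition~\ref{vectxi}(\romannumeral2) gives $W^\T(G)\xi(G)=0$, and since $V$ consists of the first $n-1$ columns of $W(G)$, the first $n-1$ coordinates of this identity read $V^\T\xi(G)=0$. Combined with Proposition~\ref{vectxi}(\romannumeral1) and (\romannumeral3), which guarantee $\xi^\T(G)\xi(G)=\det V^\T V\neq 0$ (the inequality holding because the full column rank of $V$ makes its Gram matrix positive definite), this forces $\xi(G)\notin\im V$: a vector lying in $\im V$ yet orthogonal to $\im V$ must be zero, whereas $\xi(G)$ has positive norm.

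From here I would conclude directly. Suppose $W_\delta(G)x=0$ and split $x=(x',x_n)$ with $x'\in\mathbb{Q}^{n-1}$; the relation becomes $Vx'+c_\delta x_n\,\xi(G)=0$. Taking the inner product with $\xi(G)$ and using $V^\T\xi(G)=0$ yields $c_\delta x_n\,\xi^\T(G)\xi(G)=0$, so $x_n=0$ since $c_\delta\neq 0$ and $\xi^\T(G)\xi(G)>0$; then $Vx'=0$ forces $x'=0$ by full column rank. Hence $W_\delta(G)$ is nonsingular and $\r W_\delta(G)=n$. Alternatively, and perhaps more cleanly, one can package this as a Gram-determinant computation: the orthogonality $V^\T\xi(G)=0$ makes $W_\delta^\T(G)W_\delta(G)$ block diagonal with blocks $V^\T V$ and $c_\delta^2\,\xi^\T(G)\xi(G)$, whence $(\det W_\delta(G))^2=c_\delta^2(\det V^\T V)^2>0$ by Proposition~\ref{vectxi}(\romannumeral3).

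There is no genuine obstacle here; the statement is an immediate corollary once parts (\romannumeral1)--(\romannumeral3) of Proposition~\ref{vectxi} are in hand. The only points that require care are that the appended scalar $c_\delta$ is nonzero and that $\det V^\T V>0$, and both are supplied by the full column rank of $V$, which is exactly the content of $G\in\mathcal{H}_n$.
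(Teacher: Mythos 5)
Your proof is correct, and it reaches the conclusion by a slightly different mechanism than the paper. The paper's proof is a one-line cofactor expansion: since the entries of $\xi(G)$ are by definition the cofactors $W_{i,n}$ of the last column of $W(G)$, expanding $\det W_\delta(G)$ along its last column telescopes to $\det W_\delta(G)=(-1)^\delta 2^{1-\lfloor\frac{n}{2}\rfloor}\xi^\T(G)\xi(G)$, which is nonzero because $\xi(G)\neq 0$ by Proposition \ref{vectxi}(\romannumeral1). You never touch the cofactor structure inside the corollary itself: you use Proposition \ref{vectxi}(\romannumeral2) to get $V^\T\xi(G)=0$, so the appended column is orthogonal to the column space of the full-column-rank matrix $V$, and since $\xi(G)$ has positive norm it cannot lie in $\im V$; the rank therefore jumps to $n$. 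Both arguments rest on the same two inputs ($\xi(G)\neq 0$ and $W^\T(G)\xi(G)=0$), but yours is a pure rank/orthogonality argument that would work verbatim with $\xi(G)$ replaced by any nonzero vector in the null space of $W^\T(G)$, whereas the paper's expansion additionally delivers the exact value of $\det W_\delta(G)$, a formula that is reused later (e.g.\ in Proposition \ref{sameodd}); your block-diagonal Gram computation recovers that value up to sign, so nothing essential is lost. One cosmetic remark: you do not need positive definiteness of $V^\T V$ to see $\xi^\T(G)\xi(G)>0$; Proposition \ref{vectxi}(\romannumeral1) already states that $\xi(G)$ is a nonzero real vector.
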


\begin{proof}
	Expanding the determinant of $W_\delta(G)$ by the last column, we find that $\det W_\delta(G)=(-1)^\delta2^{1-\lfloor{\frac{n}{2}}\rfloor}\xi^\T(G)\xi(G)$, which is nonzero  by Proposition \ref{vectxi}(\romannumeral1). The corollary follows.  	
\end{proof}

We are ready to present the main result of this paper, which can be regarded as a natural extension of Theorem \ref{unique} for almost controllable graphs.
\begin{theorem}\label{main}
	 Let $G\in \mathcal{H}_n$ and $H$ be a graph generalized cospectral to $G$. Then the equation $Q^\T A(G)Q=A(H)$ for variable $Q\in \RO_n$ has exactly two solutions $Q_0=W_0(G)W_{0}^{-1}(H) $ and $Q_1=W_1(G)W_0^{-1}(H)$, both of which are rational.
\end{theorem}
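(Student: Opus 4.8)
The plan is to prove the two halves of the statement separately: first that \emph{every} solution $Q$ must coincide with $Q_0$ or $Q_1$, and then that \emph{both} $Q_0$ and $Q_1$ are genuine solutions, lying in $\RO_n$; rationality and distinctness will then be immediate. Throughout I would lean on the structural facts about $\xi$ collected in Proposition \ref{vectxi}. By Theorem \ref{genconQ} at least one $Q\in\RO_n$ with $Q^\T A(G)Q=A(H)$ exists, and as in the proof of Proposition \ref{vectxi}(\romannumeral4) this fixed $Q$ satisfies $Q^\T e=e$, $A^k(H)e=Q^\T A^k(G)e$, hence $W(H)=Q^\T W(G)$ and $S^\T S=V^\T V$. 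The first key observation I would isolate is that $\xi(G)$ is an eigenvector of $A(G)$: from $W^\T(G)\xi(G)=0$ (Proposition \ref{vectxi}(\romannumeral2)), $\xi(G)$ is orthogonal to $e,A(G)e,\ldots,A^{n-1}(G)e$; writing $A^n(G)e$ as a combination of these by Cayley--Hamilton shows $A(G)\xi(G)$ is again orthogonal to the whole column space of $W(G)$, which is $(n-1)$-dimensional, so $A(G)\xi(G)$ lies in its $1$-dimensional orthogonal complement spanned by $\xi(G)$. Thus $A(G)\xi(G)=\mu\,\xi(G)$ for some scalar $\mu$, and likewise $A(H)\xi(H)=\mu'\xi(H)$.

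For the uniqueness half, take an arbitrary solution $Q$ and show $Q\xi(H)=\pm\xi(G)$. Indeed $W^\T(H)\xi(H)=0$ together with $W^\T(H)=W^\T(G)Q$ forces $W^\T(G)\,Q\xi(H)=0$, so $Q\xi(H)$ lies in the null space of $W^\T(G)$, which is spanned by $\xi(G)$; hence $Q\xi(H)=\lambda\,\xi(G)$, and taking squared norms with $Q^\T Q=I_n$ and Proposition \ref{vectxi}(\romannumeral3)--(\romannumeral4) yields $\lambda^2=1$. Comparing columns, $QW_0(H)$ agrees with $W_\delta(G)$ in its first $n-1$ columns (because $QA^k(H)e=A^k(G)e$) and has last column $\pm\,2^{1-\lfloor n/2\rfloor}\xi(G)$, so $QW_0(H)\in\{W_0(G),W_1(G)\}$ and therefore $Q=W_\delta(G)W_0^{-1}(H)=Q_\delta$. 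Applying $A(G)=QA(H)Q^\T$ to $Q\xi(H)=\pm\xi(G)$ along the way also gives $\mu=\mu'$.

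Conversely I would verify directly that each $Q_\delta$ lies in $\RO_n$ and solves the equation. Since $e$ is the first column of both $W_\delta(G)$ and $W_0(H)$, we get $W_0^{-1}(H)e=(1,0,\ldots,0)^\T$ and hence $Q_\delta e=e$. For orthogonality, a block computation using $V^\T\xi(G)=0$ shows $W_\delta^\T(G)W_\delta(G)$ is block diagonal with blocks $V^\T V$ and $4^{1-\lfloor n/2\rfloor}\,\xi^\T(G)\xi(G)$, the second block being independent of $\delta$; combined with $V^\T V=S^\T S$ and Proposition \ref{vectxi}(\romannumeral4), this gives $W_\delta^\T(G)W_\delta(G)=W_0^\T(H)W_0(H)$, i.e. $Q_\delta^\T Q_\delta=I_n$. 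For the intertwining, I would write $A^{n-1}e$ through the integral recurrence of Lemma \ref{intcom} and combine it with the eigenvector identity $A\xi=\mu\,\xi$ to obtain $A(G)W_\delta(G)=W_\delta(G)M$ and $A(H)W_0(H)=W_0(H)M$ with the \emph{same} block-diagonal matrix $M$ having blocks $C$ (the companion matrix of the recurrence) and $\mu$; the recurrence coefficients coincide for $G$ and $H$ because $W(H)=Q^\T W(G)$, and the scalar blocks agree because $\mu=\mu'$. These two identities give $A(G)Q_\delta=Q_\delta A(H)$, which with orthogonality yields $Q_\delta^\T A(G)Q_\delta=A(H)$.

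It then remains to observe that $Q_0\neq Q_1$, since $W_0(G)$ and $W_1(G)$ differ exactly in their nonzero last column while $W_0^{-1}(H)$ is invertible by Corollary \ref{r2}, and that both are rational because each $W_\delta(G)$ has rational entries and $W_0(H)$ is invertible over $\mathbb{Q}$; hence there are exactly two solutions. I expect the main obstacle to be the ``both are solutions'' direction, and within it the identification of a \emph{common} conjugating matrix $M$ for $A(G)W_\delta(G)$ and $A(H)W_0(H)$: this is precisely where the eigenvector property of $\xi$ and the equality $\mu=\mu'$ of the non-main eigenvalues are indispensable. By contrast, the orthogonality computation is routine once Proposition \ref{vectxi}(\romannumeral3)--(\romannumeral4) is invoked, and the uniqueness direction is straightforward after establishing $Q\xi(H)=\pm\xi(G)$.
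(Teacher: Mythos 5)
Your proposal is correct, and the uniqueness half follows the paper's argument essentially verbatim ($Q\xi(H)$ lies in the one-dimensional nullspace of $W^\T(G)$, has the same length as $\xi(G)$, hence equals $\pm\xi(G)$, forcing $QW_0(H)\in\{W_0(G),W_1(G)\}$). Where you genuinely diverge is the existence half. The paper never verifies the formulas $Q_\delta=W_\delta(G)W_0^{-1}(H)$ directly: it first builds, from an orthonormal eigenbasis $p_1,\ldots,p_n$ with $p_n\perp e$ (possible since $G$ has exactly $n-1$ main eigenvalues by Lemma \ref{rankmain}), the matrix $P=P_0P_1^\T\in\RO_n$ with $P\neq I_n$ and $P^\T A(G)P=A(G)$, and then composes $P$ with one solution guaranteed by Theorem \ref{genconQ} to get a second; ``exactly two'' then falls out of ``at most two'' plus ``at least two''. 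You instead check head-on that each $Q_\delta$ is a regular orthogonal solution: regularity from the first columns, orthogonality from the block-diagonal identity $W_\delta^\T(G)W_\delta(G)=\diag(V^\T V,\,4^{1-\lfloor n/2\rfloor}\xi^\T(G)\xi(G))=W_0^\T(H)W_0(H)$ (using $V^\T\xi(G)=0$ and Proposition \ref{vectxi}(\romannumeral3)--(\romannumeral4)), and the intertwining $A(G)W_\delta(G)=W_\delta(G)M=$ (same $M$) $=W_0^{-1}(H)$-conjugate of $A(H)$, which rests on your observation that $\xi(G)$ spans $N(W^\T(G))$ and is therefore an eigenvector of $A(G)$, with matching eigenvalue $\mu=\mu'$ transported by any one solution $Q$. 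Your computations all check out (in particular $A\xi\perp\textup{col}\,W(G)$ via Cayley--Hamilton, and the recurrence coefficients for $G$ and $H$ agreeing because $W(H)=Q^\T W(G)$). The trade-off: the paper's spectral construction of $P$ is shorter and makes transparent \emph{why} there are two solutions (the sign ambiguity of the non-main eigenvector), while your route is more computational but yields the explicit formulas for $Q_0$ and $Q_1$ as a by-product rather than by elimination, and isolates the useful structural fact $A(G)\xi(G)=\mu\xi(G)$, which the paper does not state.
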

\begin{proof}
	As $G$ and $H$ are generalized cospectral, Theorem \ref{genconQ} indicates that the equation $Q^\T A(G)Q=A(H)$ has at least one solution $Q\in \RO_n$. Let $Q=\hat{Q}$ be any solution of $Q^\T A(G)Q=A(H)$ in $\RO_n$. Note that $H\in\mathcal{H}_n$  by Proposition \ref{vectxi}(\romannumeral4). We claim that  $\hat{Q}W_0(H)=W_0(G)$ or $W_1(G)$.
	
	As  $\hat{Q}^\T A(G)\hat{Q}=A(H)$, we have  $\hat{Q}^\T W(G)=W(H)$, i.e., $W^\T(G) \hat{Q}=W^\T (H)$. As $H\in \mathcal{H}_n$, we have $W^\T (H)\xi(H)=0$ by Proposition \ref{vectxi}(\romannumeral2). Thus, $W^\T(G) \hat{Q}\xi(H)=0$. Since $\r W^\T (G)=n-1$, the nullspace $N(W^\T(G))$  of  $W^\T(G)$ is one dimensional. By  Proposition \ref{vectxi}(\romannumeral1-\romannumeral2), we know that
	$\xi(G)$ is a nonzero vector in $N(W^\T(G))$ and hence $\hat{Q}\xi(H)=x \xi(G)$ for some real number $x$. By Proposition \ref{vectxi}(\romannumeral4), the two vectors
	$\xi(H),\xi(G)$ have the same length. Thus we must have $x=\pm 1$. Now the equality  $\hat{Q}\xi(H)=\pm \xi(G)$, together with the fact that $\hat{Q}W(H)=W(G)$, implies  $\hat{Q}W_0(H)=W_0(G)$ or $W_1(G)$, as claimed.
	
	From the equality $\hat{Q}W_0(H)=W_0(G)$ or $W_1(G)$, we know that $\hat{Q}=W_0(G)W_0^{-1}(H)$ or $W_1(G)W_0^{-1}(H)$. From the arbitrariness of $\hat{Q}$,
	we find that the  equation $Q^\T A(G)Q=A(H),Q\in\RO_n$ has at most two solutions. It remains to show that this equation has at least two solutions.
	
	First consider the special equation $Q^\T A(G) Q=A(G)$ for the case $H=G$. By Lemma \ref{rankmain}, the graph $G$ has exactly $n-1$ main eigenvalues. Therefore, there exist $n$ orthonormal eigenvectors $p_1,p_2,\ldots,p_n$ such that $e^\T p_n=0$ and $e^\T p_i\neq 0$ for $i=1,2,\ldots, n-1$. Let $P_\delta=[p_1,p_2,\ldots,p_{n-1},(-1)^\delta p_n]$ for $\delta=0,1$. Clearly, both $P_0$ and $P_1$ are orthogonal, and $P_0^\T A(G) P_0= P_1^\T A(G) P_1$. Let $P=P_0P_1^\T$. Then $P$ is orthogonal and $P^\T A(G) P=A(G)$. As $p_n^\T e=0$, one easily checks that $P_1^\T e=P_0^\T e$ and hence $Pe=e$. Thus $P\in \RO_n$ and $P$ is clearly not the identity matrix $I_n$. This proves that the equation  $Q^\T A(G) Q=A(G)$ has at least two solutions.

 Now consider the general equation $Q^\T A(G) Q=A(H)$ for two generalized cospectral graphs $G$ and $H$. We already know that the equation $Q^\T A(G) Q=A(H)$ has at least one solution, say $Q=\bar{Q}$, by Theorem \ref{genconQ}. Let $Q=P$ be a solution of $Q^\T A(G) Q=A(G)$, where $P\neq I_n$. One easily sees that $Q=P\bar{Q}$ also satisfies the equation  $Q^\T A(G) Q=A(H)$. Note that $P\bar{Q}\in \RO_n$ and $P\bar{Q}\neq \bar{Q}$. This indicates that the equation  $Q^\T A(G) Q=A(H)$ has at least two solutions and hence completes the proof of the theorem.
\end{proof}

A pair of generalized cospectral almost controllable graphs with two regular orthogonal matrices was reported in Example 10 \cite{liu2017EJC}, Theorem~\ref{main} gives a full explanation of this phenomenon.

\begin{definition} Let $Q\in \RO_n(\mathbb{Q})$. The level of $Q$, denoted by $\ell(Q)$, or simply $\ell$, is the smallest
positive integer $k$ such that $kQ$ is an integral matrix.
\end{definition}

Define $\mathcal{Q}_G(H)=\{Q\in \RO_n(\mathbb{Q})\colon\,Q^\T A(G) Q=A(H) \}$ and  $\mathcal{Q}_G=\cup_{H\in \mathcal{H}_n}\mathcal{Q}_G(H)$. Let $G$ and $H$ be two generalized cospectral graphs in $\mathcal{H}_n$. We note that the two matrices in $\mathcal{Q}_G(H)$, affirmed by Theorem \ref{main}, may not have the same level. In particular, if $G=H$ and $G$ is asymmetric, then one matrix in  $\mathcal{Q}_G(H)$ is the identity matrix whose level is 1, but the other matrix in $\mathcal{Q}_G(H)$ cannot be a permutation matrix (as $G$ is asymmetric) and hence has level at least 2.
\begin{proposition}\label{samelevel}
	If $G$ is a graph in $\mathcal{H}_n^s$ and $H$ is generalized cospectral with $G$, then the two matrices in  $\mathcal{Q}_G(H)$ have the same level.	
\end{proposition}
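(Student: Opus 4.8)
The plan is to read off the two matrices from Theorem~\ref{main}, to isolate their ``ratio'' as a single matrix $P_G$ depending only on $G$, and then to show that for $G\in\mathcal{H}_n^s$ this $P_G$ is nothing but the permutation interchanging the twin pair. Since the level $\ell(Q)$ is the least $k$ with $kQ$ integral, and a permutation of the rows of a rational matrix leaves its multiset of entries unchanged, once $P_G$ is a permutation matrix the equality of levels is immediate.

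\textbf{Reduction.} By Theorem~\ref{main} the set $\mathcal{Q}_G(H)$ consists of exactly $Q_0=W_0(G)W_0^{-1}(H)$ and $Q_1=W_1(G)W_0^{-1}(H)$, both invertible by Corollary~\ref{r2}. First I would compute the ratio: since $Q_0^{-1}=W_0(H)W_0^{-1}(G)$, we get $Q_1Q_0^{-1}=W_1(G)W_0^{-1}(H)\cdot W_0(H)W_0^{-1}(G)=W_1(G)W_0^{-1}(G)=:P_G$, so that $Q_1=P_GQ_0$. Thus it suffices to prove that $P_G$ is a permutation matrix, for then $Q_1$ is obtained from $Q_0$ by permuting rows and $\ell(Q_0)=\ell(Q_1)$.

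\textbf{Identifying $P_G$.} The key step is to recognize $P_G$ as the unique non-trivial regular orthogonal symmetry of $A(G)$, and here Theorem~\ref{main} is applied a second time, now to the pair $(G,G)$. It yields that $Q^\T A(G)Q=A(G)$, $Q\in\RO_n$, has exactly the two solutions $W_0(G)W_0^{-1}(G)=I_n$ and $W_1(G)W_0^{-1}(G)=P_G$; in particular $P_G\neq I_n$ and $P_G^\T A(G)P_G=A(G)$. Next I would use the defining property of $\mathcal{H}_n^s$: such a $G$ has exactly $n-1$ orbits under $\mathrm{Aut}(G)$, forcing the orbit sizes to be one $2$ together with $n-2$ singletons; writing the size-$2$ orbit as $\{\tau,\tau'\}$, every automorphism fixes each singleton orbit pointwise and stabilizes $\{\tau,\tau'\}$, so $\mathrm{Aut}(G)=\{I_n,T\}$, where $T$ is the transposition of $\tau$ and $\tau'$ (the non-identity element exists exactly because $\{\tau,\tau'\}$ is a single orbit). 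The permutation matrix $T$ satisfies $T\in\RO_n$ (it is orthogonal and $Te=e$), $T^\T A(G)T=A(G)$, and $T\neq I_n$; comparing with the two solutions above forces $T=P_G$.

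\textbf{Conclusion and main obstacle.} Combining the two steps gives $Q_1=TQ_0$, i.e.\ $Q_1$ is $Q_0$ with rows $\tau$ and $\tau'$ interchanged, whence $\ell(Q_0)=\ell(Q_1)$. I expect the main obstacle to be the identification $P_G=T$: the substance lies in establishing, via Theorem~\ref{main} applied to $H=G$, that $P_G$ is precisely the unique non-identity element of $\RO_n$ fixing $A(G)$ by conjugation, and then in using the orbit count of $\mathcal{H}_n^s$ to pin this symmetry down as the twin transposition. (As a sanity check, one can instead compute $\xi(G)$ directly: removing any row other than $\tau,\tau'$ from $V=[e,A(G)e,\dots,A^{n-2}(G)e]$ leaves two equal rows, so all entries of $\xi(G)$ vanish except those at $\tau,\tau'$, which are equal up to sign; by Proposition~\ref{vectxi} this makes $P_G$ integral and, after the sign is determined, equal to $T$. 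This is a useful cross-check, but the automorphism argument is cleaner.) Everything else is routine bookkeeping.
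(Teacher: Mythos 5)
Your proof is correct and rests on the same two pillars as the paper's: the ``exactly two solutions'' count from Theorem~\ref{main} and the existence of a nontrivial automorphism of $G\in\mathcal{H}_n^s$, so that the two matrices differ by left multiplication by a permutation and hence have equal level. The paper gets there slightly more directly --- it takes the twin transposition $P$, notes $PQ\in\mathcal{Q}_G(H)$ with $PQ\neq Q$, and concludes $\mathcal{Q}_G(H)=\{Q,PQ\}$ --- whereas you first compute the ratio $Q_1Q_0^{-1}=W_1(G)W_0^{-1}(G)$ and then identify it with that transposition, but this is only a presentational detour.
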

\begin{proof}
	As $G\in \mathcal{H}_n^s$, there exists a permutation matrix $P$, other than the identity matrix $I_n$, such that $P^\T A(G) P=A(G)$. Let $Q$ be a matrix in  $\mathcal{Q}_G(H)$. Clearly, $PQ\in \mathcal{Q}_G(H)$ and hence $\mathcal{Q}_G(H)=\{Q,PQ\}$. As the rows in $PQ$ are  rearrangement of  rows in $P$, the two matrices must have the same level and hence the proposition follows.
\end{proof}
An immediate corollary of Proposition \ref{samelevel} is the following criterion for a graph $G\in \mathcal{H}_n^s$ to be DGS. The same criterion for controllable graphs was presented in \cite{wang2006EUJC}.
\begin{corollary}\label{dgsperm}
Let $G\in\mathcal{H}_n^s$. Then $G$ is DGS if and only if $\mathcal{Q}_G$ contains only permutation matrices.
\end{corollary}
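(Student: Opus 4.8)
The plan is to prove the two implications separately, using Theorem~\ref{main} in both as the starting point to reduce everything to the two regular orthogonal matrices it produces.

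For the ``if'' direction I would argue as follows. Assume $\mathcal{Q}_G$ contains only permutation matrices, and let $H$ be any graph generalized cospectral with $G$. Theorem~\ref{main} tells me that $H\in\mathcal{H}_n$ and that there is a rational $Q\in\RO_n$ with $Q^\T A(G)Q=A(H)$; such a $Q$ lies in $\mathcal{Q}_G(H)\subseteq\mathcal{Q}_G$, hence is a permutation matrix. Then $A(H)$ arises from $A(G)$ by a simultaneous permutation of rows and columns, so $H\cong G$, and $G$ is DGS. This direction is immediate once Theorem~\ref{main} places the conjugating matrix inside $\mathcal{Q}_G$.

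For the ``only if'' direction I would assume $G$ is DGS and take an arbitrary $Q\in\mathcal{Q}_G$, say $Q\in\mathcal{Q}_G(H)$ with $H\in\mathcal{H}_n$ and $Q^\T A(G)Q=A(H)$. By Theorem~\ref{genconQ}, $G$ and $H$ are generalized cospectral, so the DGS hypothesis gives $H\cong G$, i.e.\ $R^\T A(G)R=A(H)=Q^\T A(G)Q$ for some permutation matrix $R$. Setting $N=RQ^\T\in\RO_n$ and using orthogonality of $Q$, a one-line computation yields $N^\T A(G)N=QR^\T A(G)RQ^\T=QQ^\T A(G)QQ^\T=A(G)$, so $N\in\mathcal{Q}_G(G)$. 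The key step is then to determine $\mathcal{Q}_G(G)$ exactly: because $G\in\mathcal{H}_n^s$ has a nontrivial automorphism there is a permutation matrix $P\neq I_n$ with $P^\T A(G)P=A(G)$, so $I_n$ and $P$ both lie in $\mathcal{Q}_G(G)$; Theorem~\ref{main} with $H=G$ says there are exactly two such matrices, whence $\mathcal{Q}_G(G)=\{I_n,P\}$ consists of permutation matrices. Consequently $N$ is a permutation matrix and $Q^\T=R^\T N$ is a product of two permutation matrices, so $Q$ is a permutation matrix.

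I expect the main obstacle to be exactly this identification of $\mathcal{Q}_G(G)$, which is where the hypothesis $G\in\mathcal{H}_n^s$ (rather than merely $G\in\mathcal{H}_n$) is essential: it supplies the genuine permutation $P$ realizing the second of the two solutions guaranteed by Theorem~\ref{main}. For an asymmetric almost controllable graph the second solution need not be a permutation, as noted just before Proposition~\ref{samelevel}, and both the argument and the statement would fail.
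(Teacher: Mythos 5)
Your proposal is correct and follows essentially the paper's route: the paper derives the corollary from Proposition~\ref{samelevel}, whose proof shows $\mathcal{Q}_G(H)=\{Q,PQ\}$ for the nontrivial automorphism $P$ supplied by $G\in\mathcal{H}_n^s$, which is exactly the identification you make (phrased via $\mathcal{Q}_G(G)=\{I_n,P\}$ and conjugating back). Both arguments rest on the same two ingredients — Theorem~\ref{main}'s count of exactly two rational solutions and the existence of the nontrivial automorphism — so this is the same proof in slightly different packaging.
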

In the next section, we shall introduce a specific subset $\mathcal{F}_n$ of $\mathcal{H}_n^s$. It turns out that, for graphs $G$ in $\mathcal{F}_n$, we can give a simple condition to ensure that $G$ is DGS.
\section{A simple criterion for $G\in\mathcal{H}_n^s$ to be DGS}
For controllable graphs, Wang \cite{wang2017JCTB} gave the following simple arithmetic criterion for a graph to be DGS.
\begin{theorem}\label{controDGS}\cite{wang2017JCTB}
	If $\frac{\det W(G)}{2^{\lfloor\frac{n}{2}\rfloor}}$ is odd and square-free, then $G$ is DGS.
\end{theorem}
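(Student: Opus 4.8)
The plan is to prove Theorem \ref{controDGS} by reducing the DGS property to the statement that every rational regular orthogonal $Q$ realizing a generalized cospectral mate of $G$ must be a permutation matrix. For \emph{controllable} graphs the starting point is Theorem \ref{unique}: given any $H$ generalized cospectral with $G$, the \emph{unique} matrix $Q\in\RO_n$ with $Q^\T A(G)Q=A(H)$ is rational and equals $W(G)W^{-1}(H)$. Thus it suffices to show that any $Q\in\RO_n(\mathbb{Q})$ with $Q^\T A(G)Q$ a $\{0,1\}$-matrix is a permutation matrix. Following the level machinery already introduced in the excerpt, I would study the level $\ell=\ell(Q)$, the least positive integer with $\ell Q$ integral, and argue that the arithmetic hypothesis forces $\ell=1$, since an integral orthogonal matrix with constant row sums must be a permutation matrix.

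The key steps are as follows. First I would pass from $Q=W(G)W^{-1}(H)$ to the relation $\ell W(H)\equiv (\ell Q)^\T W(G)\pmod{\text{anything}}$, and more usefully to $W^\T(G)Q=W^\T(H)$, which ties $Q$ to the walk matrices. Second, I would connect the level $\ell$ to a prime $p$ dividing $\ell$ and extract, via Fact \ref{basicsnf} and the Smith normal form of $W(G)$, information about the $p$-rank of $W(G)$ and about solvability of congruences of the form $W^\T(G)x\equiv 0\pmod{p}$. The crucial arithmetic input is the hypothesis that $\det W(G)/2^{\lfloor n/2\rfloor}$ is odd and square-free: by Fact \ref{basicsnf}(i) the product of invariant factors is $\pm\det W(G)$, and Lemma \ref{r2up} together with Lemma \ref{even} pins the $2$-part of $\det W(G)$ at exactly $2^{\lfloor n/2\rfloor}$ (the factor $2^{\lfloor n/2\rfloor}$ being forced), so square-freeness of the odd part means that for every odd prime $p$ at most one invariant factor is divisible by $p$, and that one is divisible by $p$ only to the first power. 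Third, I would show that if some odd prime $p\mid\ell$, then a nonzero vector $x\not\equiv 0\pmod p$ would solve $W^\T(G)x\equiv 0\pmod{p}$ in a way that conflicts with the square-free, first-power structure of the invariant factors; this is where Fact \ref{basicsnf}(iv) on the solvability of $Mx\equiv 0\pmod{p^2}$ enters, ruling out $p^2\mid\ell$ and ultimately ruling out $p\mid\ell$ altogether. The prime $p=2$ is handled separately using Lemma \ref{even} and the exact power $2^{\lfloor n/2\rfloor}$, controlling the $2$-part of $Q$.

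With all primes excluded from $\ell$ we conclude $\ell=1$, so $Q$ is integral; an integral orthogonal matrix with $Qe=e$ has each row a unit vector, hence $Q$ is a permutation matrix, and therefore $H\cong G$, proving $G$ is DGS.

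The hard part, I expect, will be the odd-prime argument: one must convert the rather soft statement ``$p\mid\ell(Q)$'' into a genuine obstruction in the Smith normal form of $W(G)$, i.e.\ produce from a putative level-$p$ orthogonal $Q$ a vector witnessing $p^2\mid d_n$ or two independent $p$-divisible invariant factors, contradicting square-freeness. Making this bridge precise — linking the denominator structure of $Q$ to mod-$p^2$ solvability of $W^\T(G)x\equiv 0$ via the orthogonality relation $Q^\T Q=I_n$ and the identity $W^\T(G)Q=W^\T(H)$ — is the technical heart, and it is exactly the place where the controllability assumption (full rank of $W(G)$) is indispensable and where the almost-controllable generalization of later sections will have to replace $W(G)$ by the augmented matrices $W_\delta(G)$ of Corollary \ref{r2}.
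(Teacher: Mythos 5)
First, a point of reference: the paper does not prove Theorem \ref{controDGS} at all --- it is quoted verbatim from \cite{wang2017JCTB} and used as motivation. The only thing to compare your proposal against is the strategy of that cited source, which this paper replays in the harder almost-controllable setting (Propositions \ref{elldivdn}--\ref{sameodd}, Corollary \ref{pdn1}, Theorem \ref{mainodd}, Lemma \ref{wxp2}, and Propositions \ref{mainfrom2}--\ref{fullbr}). Measured against that, your overall architecture is exactly right: reduce DGS to ``every $Q\in\RO_n(\mathbb{Q})$ with $Q^\T A(G)Q$ a $\{0,1\}$-matrix is a permutation matrix'' via the uniqueness and rationality in Theorem \ref{unique}, introduce the level $\ell$, show $\ell\mid d_n\mid\det W(G)$, and kill each prime divisor of $\ell$ separately, with Fact \ref{basicsnf}(iv) as the endgame for odd $p$.

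However, as a proof the proposal has two genuine gaps, both of which you flag yourself but neither of which is dispensable. (1) The odd-prime step is not the soft deduction you describe: from $p\mid\ell$ one only gets, almost for free, a column $q$ of $\ell Q$ with $q\not\equiv 0$ and $W^\T(G)q\equiv 0\pmod{p}$, which merely re-proves $\pr W(G)=n-1$ --- something already guaranteed by the square-free hypothesis and hence no contradiction at all. The actual obstruction requires the full chain: one-dimensionality of the null space mod $p$ forces all nonzero columns of $\ell Q$ into $\span(\beta)$ for a single vector $\beta$; the intertwining $A(G)\,\ell Q=\ell Q\,A(H)$ makes $\beta$ an eigenvector mod $p$; orthogonality $(\ell Q)^\T(\ell Q)=\ell^2 I_n\equiv 0\pmod{p^2}$ gives $\beta^\T\beta\equiv\beta^\T A\beta\equiv 0\pmod{p^2}$ up to unit squares; and only then does a lifting argument (the analogue of Lemma \ref{partwy} and Lemma \ref{wxp2}) manufacture an $x\equiv\beta\pmod{p}$ with $W^\T(G)x\equiv 0\pmod{p^2}$, at which point Fact \ref{basicsnf}(iv) and square-freeness collide. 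None of this bridge is present in your write-up. (2) The case $p=2$ cannot be ``controlled by Lemma \ref{even} and the exact power $2^{\lfloor n/2\rfloor}$'' in a sentence; it is an independent argument (the analogue of Propositions \ref{mainfrom2} and \ref{fullbr}, i.e.\ Lemma 4.1 and Lemma 3.10 of \cite{wang2017JCTB}) showing that $\tfrac{1}{2}W^\T(G)\tilde{W}_1$ has full rank over $\mathbb{F}_2$, which is what forces $\ell$ odd. So: right road map, correct identification of where the difficulty lies, but the two steps that constitute the entire mathematical content of the theorem are described rather than proved.
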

The determinant condition of Theorem \ref{controDGS} can be equivalently described using the language of Smith normal forms.
\begin{proposition}\label{detsnf}\cite{wang2017JCTB}
	$\frac{\det W(G)}{2^{\lfloor\frac{n}{2}\rfloor}}$ is odd and square-free if and only if the SNF of $W(G)$ is
	$$\diag[\underbrace{1,1,\ldots,1}_{\lceil\frac{n}{2}\rceil},\underbrace{2,2,\ldots,2,2b}_{\lfloor\frac{n}{2}\rfloor}],$$
	where $b$ is an odd square-free integer.
\end{proposition}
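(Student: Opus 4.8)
The plan is to read everything off from the invariant factors $d_1\mid d_2\mid\cdots\mid d_n$ of $W(G)$, using $\det W(G)=\pm d_1d_2\cdots d_n$ (Fact~\ref{basicsnf}(\romannumeral1)) together with the facts that locate, for each prime, which invariant factors are divisible by it. Throughout, let $v_p(m)$ denote the exponent of the prime $p$ in a nonzero integer $m$; note the hypothesis already forces $\det W(G)\neq 0$, so $W(G)$ is nonsingular and every $d_i$ is a positive integer. The ``if'' direction is then a one-line computation: substituting the displayed SNF into Fact~\ref{basicsnf}(\romannumeral1) gives $\det W(G)=\pm 2^{\lfloor\frac{n}{2}\rfloor-1}\cdot 2b=\pm 2^{\lfloor\frac{n}{2}\rfloor}b$, whence $\det W(G)/2^{\lfloor\frac{n}{2}\rfloor}=\pm b$ is odd and square-free.

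For the ``only if'' direction I would treat the prime $2$ first. Since the $d_i$ form a divisibility chain, Fact~\ref{basicsnf}(\romannumeral2) says the odd invariant factors are exactly $d_1,\ldots,d_k$ with $k=\br W(G)$, so precisely $n-k$ of the $d_i$ are even. By Lemma~\ref{r2up} we have $k\le\lceil\frac{n}{2}\rceil$, hence at least $n-\lceil\frac{n}{2}\rceil=\lfloor\frac{n}{2}\rfloor$ invariant factors are even, each contributing $v_2(d_i)\ge 1$, and therefore $v_2(\det W(G))=\sum_i v_2(d_i)\ge\lfloor\frac{n}{2}\rfloor$. The hypothesis that $\det W(G)/2^{\lfloor\frac{n}{2}\rfloor}$ is odd means $v_2(\det W(G))=\lfloor\frac{n}{2}\rfloor$, so both inequalities are forced to be equalities: there are exactly $\lfloor\frac{n}{2}\rfloor$ even factors (equivalently $\br W(G)=\lceil\frac{n}{2}\rceil$), and each even factor satisfies $v_2(d_i)=1$.

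Next I would handle the odd primes using square-freeness. Fix an odd prime $p$; then $v_p(\det W(G))=v_p(b)\le 1$. Applying Fact~\ref{basicsnf}(\romannumeral2) to $p$ and using the divisibility chain once more, the factors divisible by $p$ are exactly the last $n-\pr W(G)$ of them, so if two or more were divisible by $p$ we would get $v_p(\det W(G))\ge 2$, a contradiction. Hence at most one invariant factor is divisible by any odd prime, and only the largest, $d_n$, can be, with multiplicity one; consequently the odd part of $d_n$ is $|b|$ and the odd part of every $d_i$ with $i<n$ is $1$. Combining the two analyses gives $d_1=\cdots=d_{\lceil n/2\rceil}=1$ (odd, with no odd-prime content), $d_{\lceil n/2\rceil+1}=\cdots=d_{n-1}=2$ (even with $v_2=1$ and no odd-prime content), and $d_n=2b$, which is exactly the asserted Smith normal form.

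The crux is the second paragraph: Lemma~\ref{r2up} is precisely what converts the analytic condition on $\det W(G)$ into a rigid count of even invariant factors, and it is the only place where the specific exponent $\lfloor\frac{n}{2}\rfloor$ enters. The remaining steps are routine valuation bookkeeping; the one point worth stating carefully is that the chain $d_i\mid d_{i+1}$ makes ``$p$ divides two invariant factors'' automatically force $p^2\mid\det W(G)$, which is exactly the bridge between square-freeness and pushing all nontrivial odd content into the single factor $d_n$.
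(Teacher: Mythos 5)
Your proof is correct. The paper itself cites this proposition from \cite{wang2017JCTB} without reproving it, but your argument --- reading everything off the invariant-factor divisibility chain via Fact~\ref{basicsnf}, using Lemma~\ref{r2up} to force exactly $\lfloor\frac{n}{2}\rfloor$ even invariant factors each with $2$-adic valuation one, and using square-freeness to confine all odd-prime content to $d_n$ --- is exactly the technique the paper employs for the analogous Proposition~\ref{detsnf2}, so this is essentially the same approach (and, if anything, spelled out more carefully).
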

Now we introduce two particular subsets of $\mathcal{H}_n^s$.
\begin{definition}
	Let $\mathcal{F}_n=
	\{G\in\mathcal{H}_n^s\colon\,$ the SNF of W(G) is $$\diag[\underbrace{1,1,\ldots,1}_{\lceil\frac{n}{2}\rceil},\underbrace{2,2,\ldots,2,2b,0}_{\lfloor\frac{n}{2}\rfloor}],$$where $b$ is odd and square-free$\}.$  We use $\mathcal{F}_n^{*}$ to denote the subset of $\mathcal{F}_n$ such that the corresponding (n-1)-th invariant factor $2b$ is exactly $2$, i.e., $b=1$.
\end{definition}
Analogous to Proposition \ref{detsnf}, the set $\mathcal{F}_n$ (and $\mathcal{F}_n^{*}$) can also be described using determinant condition.
\begin{proposition}\label{detsnf2}
	Let $G\in \mathcal{H}_n^s$ and $\tau$ be either of the twins. Let $b$ be odd, square-free and positive. Then
	\begin{equation}\label{defb}
\frac{\det W_{\tau, n}(G)}{2^{\lfloor\frac{n}{2}\rfloor-1}}=\pm b
	\end{equation} if and only if the SNF of $W(G)$ is
	$$\diag[\underbrace{1,1,\ldots,1}_{\lceil\frac{n}{2}\rceil},\underbrace{2,2,\ldots,2,2b,0}_{\lfloor\frac{n}{2}\rfloor}].$$
\end{proposition}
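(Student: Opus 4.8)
The plan is to reduce the stated equivalence to a statement purely about the invariant factors of $W(G)$, via the key identity that the quantity in (\ref{defb}) is, up to sign, the $(n-1)$-th determinant divisor $D_{n-1}$ of $W(G)$.

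First I would pin down the cofactor. Writing $V=[e,A(G)e,\ldots,A^{n-2}(G)e]$ and $X_i$ for $V$ with row $i$ deleted, recall from the proof of Proposition \ref{vectxi} that $W_{i,n}(G)=(-1)^{n+i}\det X_i$ is the $i$-th entry of $\xi(G)$. Since $G\in\mathcal{H}_n^s$, the twins $\tau,\tau'$ form a single orbit and therefore share the same row of $W(G)$; hence $X_i$ retains two equal rows whenever $i\notin\{\tau,\tau'\}$, forcing $\det X_i=0$ there. The two surviving matrices $X_\tau,X_{\tau'}$ have the same multiset of rows (each keeps one copy of the common twin row together with all other rows), so $|\det X_\tau|=|\det X_{\tau'}|=:c$, and by Proposition \ref{vectxi}(\romannumeral1) this common value is nonzero. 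Thus the only nonzero cofactors in the last column are $W_{\tau,n}=\pm c$ and $W_{\tau',n}=\pm c$, and (\ref{defb}) asserts precisely that $c=2^{\lfloor n/2\rfloor-1}b$.

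Next I would compute $D_{n-1}$. Every $(n-1)\times(n-1)$ minor of $W(G)$ omits one row and one column. Those omitting column $n$ are exactly the $\det X_i$, all lying in $\{0,\pm c\}$. For a minor that keeps column $n$ but omits some column $j<n$, I would invoke Lemma \ref{intcom}: since $\r W(G)=n-1$, the last column $A^{n-1}(G)e$ is an integral combination of $e,A(G)e,\ldots,A^{n-2}(G)e$. Performing the corresponding integral column operations inside such a minor cancels the contributions of the present columns and replaces column $n$ by an integer multiple of the omitted column $j$; consequently the minor equals an integer times some $\det X_i$, hence an integer multiple of $c$. As $c$ itself occurs among the minors (namely $\det X_\tau$), it follows that $D_{n-1}=\gcd$ of all these minors $=c=|\det W_{\tau,n}(G)|$, so (\ref{defb}) is equivalent to $D_{n-1}=2^{\lfloor n/2\rfloor-1}b$.

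With this reduction in hand, the equivalence becomes a bookkeeping of the invariant factors $d_1\mid\cdots\mid d_n$ of $W(G)$; here $d_n=0$ because $\r W(G)=n-1$, and $D_{n-1}=d_1\cdots d_{n-1}$ telescopes from $d_i=D_i/D_{i-1}$ with $D_0=1$. The easy direction is immediate: the displayed SNF gives $d_1\cdots d_{n-1}=2^{\lfloor n/2\rfloor-1}b$. For the converse, assume $D_{n-1}=2^{\lfloor n/2\rfloor-1}b$ with $b$ odd, square-free and positive. Lemma \ref{r2up} together with Fact \ref{basicsnf}(\romannumeral2) bounds the number of odd invariant factors by $\br W(G)\le\lceil n/2\rceil$, so at least $\lfloor n/2\rfloor-1$ of $d_1,\ldots,d_{n-1}$ are even; since the $2$-adic valuation of $D_{n-1}$ is exactly $\lfloor n/2\rfloor-1$, there are exactly that many even factors and each is precisely $2$ times an odd number. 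The delicate part is the odd part: because the $d_i$ are nested under divisibility and their odd parts multiply to the square-free number $b$, every odd prime of $b$ can divide only the largest nonzero factor $d_{n-1}$; hence $d_1,\ldots,d_{n-2}\in\{1,2\}$ and $d_{n-1}=2b$. Counting the $\lceil n/2\rceil$ ones and the $\lfloor n/2\rfloor-2$ twos among $d_1,\ldots,d_{n-2}$ then reproduces exactly the claimed SNF. I expect the first reduction, $D_{n-1}=|\det W_{\tau,n}(G)|$, to be the main obstacle, since it is where the twin structure and the integrality furnished by Lemma \ref{intcom} must be combined to control all $(n-1)$-minors at once.
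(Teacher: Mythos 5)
Your proof is correct and follows essentially the same route as the paper: both reduce the statement to the identity $D_{n-1}(W(G))=|\det W_{\tau,n}(G)|$ via the equal twin rows and Lemma \ref{intcom} (the paper zeroes the last column to pass to a unimodular-equivalent $W'$, while you handle the minors containing column $n$ by explicit integral column operations --- the same idea), and both then carry out the identical invariant-factor bookkeeping using Lemma \ref{r2up} and the square-freeness of $b$. Only your opening sentence misstates the key identity (it is $\det W_{\tau,n}(G)$ itself, not the quotient appearing in (\ref{defb}), that equals $\pm D_{n-1}$), but the body of your argument states and uses the correct version.
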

\begin{proof}
	Let $W'=[e,A(G)e,\ldots,A^{n-2}(G)e,0]$, which is obtained from $W$ by replacing the last column by zero vector. As $\r W=n-1$,  Lemma~\ref{intcom} implies that $A^{n-1}(G)e$ can be written as a linear combination of $e,A(G)e,\ldots,A^{n-2}(G)e$ with integral coefficients. This means that $W$ and $W'$ are unimodular equivalent, that is, $W$ and $W'$ must have the same Smith normal form. Note that $W'$ has two equal rows corresponding to the twin vertices. We find that all $(n-1)\times (n-1)$ minor of $W'$ is either $0$ or $\pm\det W_{\tau n}(G)$ and hence $D_{n-1}(W')=|\det W_{\tau, n}(G)|$. Thus, if the SNF of $W(G)$ (or equivalently of $W'$) has the given structure, then $D_{n-1}(W')=2^{\lfloor\frac{n}{2}\rfloor-1}b$ and hence  $\frac{\det W_{\tau, n}(G)}{2^{\lfloor\frac{n}{2}\rfloor-1}}=\pm b$.
	
	Conversely, if $|\det W_{\tau n}(G)|=2^{\lfloor\frac{n}{2}\rfloor-1}b$ for some odd and square-free $b$, then  the SNF of $W'$ (or $ W(G)$) can be written as $S=\diag(1,\ldots,1,2^{l_1},2^{l_2},\ldots,2^{l_{t-1}},2^{l_t}b,0)$. By Lemma \ref{r2up}, we have $\br W(G)\le\lceil\frac{n}{2}\rceil$ and hence $t\ge \lfloor\frac{n}{2}\rfloor-1$. Moreover, we have $l_1+l_2+\cdots+l_{t}=2^{\lfloor\frac{n}{2}\rfloor-1}$ since $D_{n-1}(W')=2^{\lfloor\frac{n}{2}\rfloor-1}b$.
	It follows that $l_1=l_2=\cdots=l_t=1$ and $t=\lfloor\frac{n}{2}\rfloor-1$. This proves the proposition.
\end{proof}
It seems natural to guess that all graphs in $\mathcal{F}_n$ is DGS.
Unfortunately, this is \emph{not} true as shown by the following example.

\noindent
\begin{example}\label{countexp}
\textup{ Let $n=9$. Let $G$ be the graph whose adjacency matrix $A=A(G)$ is given as follows:
\begin{equation*}
A=\left[ \begin{array}{ccccccccc}
0&1&0&0&1&1&1&1&0\\
1&0&0&0&1&1&1&1&0\\
0&0&0&0&1&0&0&1&1\\
0&0&0&0&0&1&0&0&0\\
1&1&1&0&0&0&1&0&0\\
1&1&0&1&0&0&1&1&0\\
1&1&0&0&1&1&0&0&1\\
1&1&1&0&0&1&0&0&1\\
0&0&1&0&0&0&1&1&0
\end{array}\right].
\end{equation*}
It can be easily computed that the SNF of $W(G)$ is $\diag[1,1,1,1,1,2,2,2\times 3\times 101,0]$. Note that $G$ contains the first two vertices as twin vertices. We see that $G\in \mathcal{F}_n$. Let
\begin{equation*}
Q=\frac{1}{3}\left[ \begin{array}{ccccccccc}
2& -1& -1& 1& 1& 1& 0& 0& 0\\
0& 0& 0& 0& 0& 0& 3& 0& 0\\
-1& 2& -1& 1& 1& 1& 0& 0& 0\\
1& 1& 1& 2& -1& -1& 0& 0& 0\\
1& 1& 1& -1& 2& -1& 0& 0& 0\\
-1& -1& 2& 1& 1& 1& 0& 0& 0\\
1& 1& 1& -1& -1& 2& 0& 0& 0\\
0& 0& 0& 0& 0& 0& 0& 3& 0\\
0& 0& 0& 0& 0& 0& 0& 0& 3
\end{array}\right].
\end{equation*}
It is easy to verify that $Q$ is a rational orthogonal matrix with level $\ell=3$, and $Q^\T AQ$ is an adjacency matrix of another graph. It follows from Corollary \ref{dgsperm} that $G$ is not DGS.}
\end{example}

Although the straightforward extension of Theorem \ref{controDGS} is incorrect, we find that, under an additional assumption, graphs in $\mathcal{F}_n$ can be shown to be DGS.
\begin{definition}\label{deflambda1}
	Let $G\in \mathcal{H}_n^s$. We define
	\begin{equation}\label{deflambda1}
\lambda_1=	\lambda_1(G)= \begin{cases} -1& \text{if the twins in $G$ are adjacent,}  \\
	0&\text{otherwise.}\\
	\end{cases}
	\end{equation}
\end{definition}
\begin{theorem}\label{mainapp}
	Let $G\in \mathcal{F}_n$ and $2b$ be the $(n-1)$-th invariant factor of $W(G)$. Then $G$ is DGS provided that
	\begin{equation}\label{addcon}
	N(W^\T(G))\not\subset N(A(G)-\lambda_1(G) I) \text{~over~} \mathbb{F}_p,
	\end{equation}
	 for each odd prime factor $p$ of $b$. In particular, every graph in $\mathcal{F}^*_n$ is DGS.
\end{theorem}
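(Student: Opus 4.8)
The plan is to run everything through Corollary~\ref{dgsperm}: since $G\in\mathcal{F}_n\subseteq\mathcal{H}_n^s$, it suffices to show that every matrix in $\mathcal{Q}_G$ is a permutation matrix, equivalently that every $Q\in\mathcal{Q}_G$ has level $\ell(Q)=1$ (a level-$1$ matrix in $\RO_n(\mathbb{Q})$ is integral and orthogonal with row sums $1$, hence a permutation matrix). So I would fix $Q\in\mathcal{Q}_G(H)$, suppose $\ell:=\ell(Q)>1$, pick a prime $p\mid\ell$, and study $M:=\ell Q$ modulo $p$. Writing $\bar M:=M\bmod p\neq 0$, the relations $Q^\T Q=QQ^\T=I_n$, $Qe=e$, $Q^\T A(G)Q=A(H)$ and $W(H)=Q^\T W(G)$ reduce modulo $p$ to $\bar M^\T\bar M=\bar M\bar M^\T=0$, $\bar M\,\bar e=\bar M^\T\bar e=0$, $A(G)\bar M=\bar M A(H)$, and $\bar M^\T W(G)\equiv 0$. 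The last relation says $\mathrm{col}(W(G))\subseteq N(\bar M^\T)$; taking orthogonal complements over $\mathbb{F}_p$ gives the crucial inclusion $\mathrm{col}(\bar M)\subseteq N(W^\T(G))$, and symmetrically $\mathrm{col}(\bar M^\T)\subseteq N(W^\T(H))$. Thus the argument is driven entirely by the dimension and the $A(G)$-module structure of $N(W^\T(G))$ over $\mathbb{F}_p$ (an $A(G)$-invariant subspace, since $y^\T A^k e=0$ for all $k$ forces $(A(G)y)^\T A^k e=0$), which is governed by $\pr W(G)$ and hence by the Smith normal form of $W(G)$ through Fact~\ref{basicsnf}.

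Next I would dispose of the primes not dividing $b$. For $p=2$ the invariant factors show $\br W(G)=\lceil n/2\rceil$, which is maximal by Lemma~\ref{r2up}; combined with Lemma~\ref{even} (so that $\mathrm{col}(W(G))$ is totally isotropic modulo $2$) this excludes $2\mid\ell$ exactly as in the controllable case of~\cite{wang2017JCTB}. For an odd prime $p\nmid b$ every nonzero invariant factor of $W(G)$ is a unit modulo $p$, so $\pr W(G)=n-1$ and $N(W^\T(G))$ is a line. Let $f$ be the vector with $f_\tau=1$, $f_{\tau'}=-1$ and $f_i=0$ otherwise. Since the two twin rows of $W(G)$ coincide, $f\in N(W^\T(G))$; as $N(W^\T(G))$ is one-dimensional over $\mathbb{Q}$ and generated by $\xi(G)$ (Proposition~\ref{vectxi}), Proposition~\ref{detsnf2} fixes the scalar and gives $\xi(G)=\pm 2^{\lfloor n/2\rfloor-1}b\,f$. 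Hence $N(W^\T(G))=\langle f\rangle$ modulo $p$. But $\mathrm{col}(\bar M)\subseteq N(W^\T(G))$ is totally isotropic while $f^\T f=2\neq 0$, so $\bar M=0$, a contradiction. Therefore $p\nmid\ell$ for all $p\nmid b$.

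The essential case is an odd prime $p\mid b$. Here $p\mid 2b$ but $p^2\nmid 2b$, so $\pr W(G)=n-2$ and $N(W^\T(G))$ is two-dimensional, still containing $f$; a direct computation with the twin neighbourhoods gives $A(G)f=\lambda_1(G)f$ (the eigenvalue is $-1$ when the twins are adjacent and $0$ otherwise). Because $\mathrm{col}(\bar M)\subseteq N(W^\T(G))$ is totally isotropic and $f$ is anisotropic, $\mathrm{col}(\bar M)$ cannot fill $N(W^\T(G))$, so $\pr\bar M=1$: write $\bar M=uv^\T$ with $u\in N(W^\T(G))$ isotropic, hence independent of $f$, so that $N(W^\T(G))=\langle f,u\rangle$. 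Substituting $\bar M=uv^\T$ into $A(G)\bar M=\bar M A(H)=u\,(A(H)v)^\T$ yields $A(G)u=\mu u$ for a scalar $\mu\in\mathbb{F}_p$. Thus $A(G)$ acts on $N(W^\T(G))$ with eigenvectors $f,u$ and eigenvalues $\lambda_1(G),\mu$; if $\mu=\lambda_1(G)$ then $N(W^\T(G))\subseteq N(A(G)-\lambda_1(G)I)$ over $\mathbb{F}_p$, contradicting hypothesis~\eqref{addcon}.

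The main obstacle is precisely to prove that $p\mid\ell$ forces $\mu\equiv\lambda_1(G)\pmod p$, i.e.\ to rule out the possibility $\mu\neq\lambda_1(G)$, which the mod-$p$ data collected so far do not settle. I expect to close this gap by a mod-$p^2$ refinement: expanding $M^\T M=\ell^2 I_n$ and $A(G)M=MA(H)$ with $M=uv^\T+pK$ and using $p\mid\ell$ produces congruences modulo $p^2$ (for instance $u^\T u\equiv v^\T v\equiv 0$ and $K^\T u\equiv\gamma v$). Combined with the twin involution $P$ swapping $\tau,\tau'$ (for which $Pu=u$, so $u^\T f=0$ and $u$ lies in the radical of the moment form $e^\T A^{i+j}(G)e$ on $\mathrm{col}(W(G))$) and with Fact~\ref{basicsnf}(iv), these should force the eigenvalue of the radical direction to coincide with the unique non-main eigenvalue $\lambda_1(G)$. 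Granting this, every prime is excluded, so $\ell=1$, $Q$ is a permutation matrix, and $G$ is DGS. Finally, for $G\in\mathcal{F}_n^{*}$ we have $b=1$, so there is no odd prime $p\mid b$ and the crux never arises; the previous two paragraphs already yield $\ell=1$, whence every graph in $\mathcal{F}_n^{*}$ is DGS unconditionally.
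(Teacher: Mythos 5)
Your framework is the right one (reduce to showing $\ell(Q)=1$ via Corollary~\ref{dgsperm}, split by primes, and your mod-$p$ analysis of $\bar M=\ell Q$ is sound: the inclusion $\mathrm{col}(\bar M)\subseteq N(W^\T(G))$, the total isotropy of $\mathrm{col}(\bar M)$, the exclusion of odd $p\nmid b$ because the line $\langle f\rangle$ is anisotropic, and the deduction $\pr\bar M=1$ with $A(G)u=\mu u$ in the case $p\mid b$ are all correct, and in places cleaner than the paper's sign-consistency argument in Proposition~\ref{mainp}(iii)). But the proof is not complete: the entire difficulty of the theorem sits in the step you explicitly leave open, namely showing that $p\mid\ell$ for an odd $p\mid b$ forces $\mu\equiv\lambda_1(G)\pmod p$ (equivalently, contradicts \eqref{addcon}). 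Your closing sketch (``a mod-$p^2$ refinement \dots should force the eigenvalue \dots'') is not an argument, and crucially it nowhere uses the square-freeness of $b$, which is indispensable: the mod-$p$ and mod-$p^2$ data you list ($u^\T u\equiv v^\T v\equiv 0$, $K^\T u\equiv\gamma v$, the twin involution) cannot by themselves exclude $p\mid\ell$, since Example~\ref{countexp} exhibits a graph in $\mathcal{F}_n$ with a genuine level-$3$ matrix in $\mathcal{Q}_G$. The paper closes this gap by a chain you would need to reproduce: under \eqref{addcon} one gets $\gamma_0=\beta$ and hence $\beta^\T\beta\equiv 0\pmod p$ and $\beta^\T(A-\lambda_0I)\beta\equiv 0\pmod{p^2}$ (Proposition~\ref{mainp}(v), Corollary~\ref{mainpforsym}); then the rank computations $\pr[A-\lambda_0I,\alpha,\beta]=n-1$ and $\dim L=3$ (Lemma~\ref{prn1}, which themselves use \eqref{addcon}); then a lifting argument producing $x\not\equiv 0\pmod p$ with $\hat W^\T(G)x\equiv 0\pmod{p^2}$ (Lemma~\ref{wxp2}); and finally Fact~\ref{basicsnf}(iv) together with $\det\hat W(G)=\pm 2^{\lfloor n/2\rfloor}b$ (Proposition~\ref{sameodd}) forces $p^2\mid b$, contradicting square-freeness. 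None of this is in your proposal.

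A secondary, smaller gap: for $p=2$ you appeal to the controllable-case argument of \cite{wang2017JCTB} as if it transferred verbatim. It does not quite: the hypothesis $\br W(G)=\lceil n/2\rceil$ you extract from the Smith normal form is only part of what is needed; one must also show that $\tfrac{1}{2}W^\T(G)\tilde W_1$ has full column rank over $\mathbb{F}_2$ in the almost controllable setting, which is the content of Proposition~\ref{fullbr} and requires the Binet--Cauchy identity $\det V^\T V=\xi^\T(G)\xi(G)=2^{2\lfloor n/2\rfloor-1}b^2$ from Proposition~\ref{vectxi}(iii). You should either prove this rank statement or at least flag it as a required lemma rather than folding it into ``exactly as in the controllable case.''
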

We remark that in Example \ref{countexp}, Condition (\ref{addcon}) does not hold for $p=3$. The proof of this theorem will be postponed to  the next section. The overall idea comes from \cite{wang2013EJC} and \cite{wang2017JCTB}, but difficulty inevitably arises due to the increase of the dimension of the nullspace $N(W^\T(G))$ by one. Before going into the details, we first give some structural properties on the two-dimensional nullspace $N(W^\T(G))$, which will results in an equivalent description of (\ref{addcon}).

Let $G\in \mathcal{H}_n^s$ with twin vertices $\tau$ and $\tau'$, where $\tau<\tau'$. We define $\alpha(G)=(a_1,a_2,\ldots,a_n)^\T$, where
\begin{equation}\label{defalpha}
a_k= \begin{cases} -1& \text{if $k=\tau$,}  \\
1&\text{if $k=\tau'$,}\\
0&\text{otherwise.}\\
\end{cases}
\end{equation}
The following result can be easily verified from definitions.

\begin{proposition}\label{eigQ}
Let $G\in H_{n}^s$ with twins $\tau$ and $\tau'$, where $\tau<\tau'$. Then 	$W^\T(G)\alpha(G)=0$, $A(G)\alpha(G)=\lambda_1(G)\alpha(G)$ and $\xi(G)= W_{\tau',n}(G)\alpha(G)=-W_{\tau, n}(G)\alpha(G).$
\end{proposition}

\begin{definition}\label{defhW}
	Let $G\in \mathcal{H}_n^s$. Write
$\hat{W}(G)=[e,A(G)e,\ldots,A^{n-2}(G)e,\alpha(G)].$
\end{definition}
\begin{proposition}\label{Walpha}
	Let $G\in \mathcal{F}_n$ and $2b$ be the $(n-1)$-th invariant factor of $W(G)$. Then $\pr \hat{W}(G)=n-1$ for any odd prime factor $p$ of $b$.
\end{proposition}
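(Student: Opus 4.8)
The plan is to realize $\hat{W}(G)$ as a genuine $n\times n$ matrix whose determinant can be computed in closed form, and then to read off $\pr\hat{W}(G)$ from the exact power of $p$ dividing that determinant. The point is that $\hat W(G)$ is obtained from the rank-$(n-1)$ matrix $V=[e,A(G)e,\ldots,A^{n-2}(G)e]$ by appending the single column $\alpha(G)$, so everything reduces to understanding how $\alpha(G)$ interacts with $V$ modulo $p$.

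First I would expand $\det\hat{W}(G)$ along its last column $\alpha(G)$. Deleting that last column returns exactly $V$, so the $(i,n)$-cofactor of $\hat{W}(G)$ equals $(-1)^{i+n}\det X_i = W_{i,n}(G) = \xi(G)_i$, where $X_i$ is $V$ with row $i$ removed (the same $X_i$ used in the proof of Proposition~\ref{vectxi}). Consequently $\det\hat{W}(G)=\alpha(G)^\T\xi(G)$. Now I would invoke Proposition~\ref{eigQ}, which gives $\xi(G)=-W_{\tau,n}(G)\,\alpha(G)$, to obtain $\det\hat{W}(G)=-W_{\tau,n}(G)\,\alpha(G)^\T\alpha(G)=-2\,W_{\tau,n}(G)$, since $\alpha(G)^\T\alpha(G)=2$. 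Feeding in Proposition~\ref{detsnf2}, which for $G\in\mathcal{F}_n$ yields $|W_{\tau,n}(G)|=2^{\lfloor\frac{n}{2}\rfloor-1}b$, I conclude $|\det\hat{W}(G)|=2^{\lfloor\frac{n}{2}\rfloor}b$.

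The last step is the $p$-adic extraction. Fix an odd prime factor $p$ of $b$. Because $b$ is square-free and $p$ is odd, $p\nmid 2^{\lfloor\frac{n}{2}\rfloor}$ and $p\,\|\,b$, so the $p$-adic valuation of $\det\hat{W}(G)$ is exactly $1$. On one hand $\det\hat{W}(G)\equiv 0\pmod p$ forces $\pr\hat{W}(G)\le n-1$; on the other hand Fact~\ref{basicsnf}(\romannumeral3) gives $p^{\,n-\pr\hat{W}(G)}\mid\det\hat{W}(G)$, and together with $p^2\nmid\det\hat{W}(G)$ this forces $n-\pr\hat{W}(G)\le 1$, i.e.\ $\pr\hat{W}(G)\ge n-1$. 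Combining the two bounds yields $\pr\hat{W}(G)=n-1$, as required.

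I expect the determinant identity $\det\hat{W}(G)=\alpha(G)^\T\xi(G)=-2W_{\tau,n}(G)$ to be the decisive move: it converts a statement about ranks over $\mathbb{F}_p$ into a clean divisibility question. The genuine subtlety — the ``obstacle'' — lies not in that computation but in controlling the \emph{exact} rank drop modulo $p$. Over $\mathbb{Q}$ the matrix $\hat{W}(G)$ is nonsingular (rank $n$), and modulo $p$ it must drop, but a priori it could drop by more than one. What guarantees a drop of exactly one is precisely the square-freeness of $b$, which pins the valuation at $1$; Fact~\ref{basicsnf}(\romannumeral3) then translates this exact valuation into the exact rank. This is also where one sees why the hypothesis cannot be weakened: were $p^2\mid b$, the valuation would exceed $1$ and the conclusion $\pr\hat{W}(G)=n-1$ could fail, consistent with the behaviour underlying Example~\ref{countexp}.
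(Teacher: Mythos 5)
Your proof is correct, but it takes a genuinely different route from the paper's. The paper argues directly with column spaces over $\mathbb{F}_p$: since the twin rows of $W(G)$ are equal, $\alpha(G)^\T W(G)=0$, while $\alpha(G)^\T\alpha(G)=2\not\equiv 0\pmod p$, so $\alpha(G)$ cannot lie in the column space of $W(G)$ over $\mathbb{F}_p$ and hence $\pr[W(G),\alpha(G)]=\pr W(G)+1=n-1$; Lemma~\ref{first} then allows deleting the redundant columns of $W(G)$ beyond the first $n-2$ without changing the rank, giving $\pr \hat{W}(G)=n-1$. You instead compute $\det\hat{W}(G)=\alpha(G)^\T\xi(G)=-2W_{\tau,n}(G)=\mp 2^{\lfloor\frac{n}{2}\rfloor}b$ (thereby anticipating what the paper only records later as Proposition~\ref{sameodd}) and read off the rank from the exact $p$-adic valuation via Fact~\ref{basicsnf}\textup{(\romannumeral3)}. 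Both arguments are sound, and your determinant identity is a clean shortcut that the paper needs anyway.

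One caveat on your closing commentary: the claim that square-freeness of $b$ is what ``pins'' the rank, and that the conclusion could fail if $p^2\mid b$, is misleading. The paper's column-space argument yields $\pr\hat{W}(G)=n-1$ using only $\pr W(G)=n-2$ and the oddness of $p$, with no appeal to square-freeness; square-freeness is a requirement of your particular method (to force the valuation to equal $1$ so that Fact~\ref{basicsnf}\textup{(\romannumeral3)} bites), not of the statement. Likewise, in Example~\ref{countexp} the integer $b=3\times 101$ is square-free; the failure of DGS there stems from the failure of condition~(\ref{addcon}), not from any collapse of $\pr\hat{W}(G)$.
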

\begin{proof}
	Let $\tau$ and $\tau'$ be the twin vertices of $G$. Then row $\tau$ and row $\tau'$ of $W(G)$ are equal. Thus, by the definition of $\alpha(G)$, we see that $\alpha^\T(G)W(G)=0$ and hence  $\alpha^\T(G)W(G)\equiv 0\pmod{p}.$
	Also, by the definition of $\alpha(G)$, we have $\alpha^\T(G)\alpha(G)=2\not\equiv 0\pmod{p}$. It follows that $\alpha(G)$ does not belongs to the column space of $W(G)$ over $\mathbb{F}_p$. This indicates that $\pr [W(G),\alpha(G)]=1+\pr W(G)$. By the definition of $\mathcal{F}_n$, we have $\pr W(G)=n-2$ and hence $\pr[W(G),\alpha(G)]=n-1$. Finally, as  $\pr W(G)=n-2$, Lemma \ref{first} implies that the last two columns of  $W(G)$ can be written as linear combinations of the first $n-2$ columns. Thus we can safely remove the $(n-1)$-th column and/or the $n$-th column in $[W(G),\alpha(G)]$ without changing its rank. In particular,  $\pr \hat{W}(G)=n-1$, as desired.
\end{proof}
By Proposition \ref{Walpha}, we know that the solution space of $\hat{W}^\T(G) x\equiv 0\pmod{p}$ is one dimensional. We define $\beta(G;p)$, or simply $\beta(G)$, to be a nontrivial (i.e., nonzero mod $p$) integral solution in $\{0,1,\ldots,p-1\}^n$ such that the last nonzero entry of $\beta(G)$ is 1. We note that $\beta(G)$ is unique and hence is well-defined. In the following propositions, we always assume that $p$ is any odd prime factor of $b$ as in Proposition \ref{Walpha}.
\begin{proposition}\label{abindep}
	Let $G\in \mathcal{F}_n$. Then $\alpha(G)$ and $\beta(G)$ constitute a system of fundamental solutions to $W^\T(G)x\equiv 0\pmod{p}$. Moreover, $\alpha(G)^\T \beta(G)\equiv 0\pmod{p}.$
\end{proposition}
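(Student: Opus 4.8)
The plan is to first pin down the dimension of the solution space of $W^\T(G)x\equiv 0\pmod p$, then exhibit $\alpha(G)$ and $\beta(G)$ as two vectors inside it, and finally prove they are linearly independent over $\mathbb{F}_p$; the orthogonality relation will then fall out of the very definition of $\beta(G)$.

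First I would record the dimension count. For an odd prime factor $p$ of $b$, the SNF prescribed by membership in $\mathcal{F}_n$ has exactly two diagonal entries divisible by $p$, namely the last invariant factor $0$ and the $(n-1)$-th invariant factor $2b$, while every $2$ among the invariant factors is a unit mod $p$ since $p$ is odd. Hence $\pr W(G)=n-2$ by part (\romannumeral2) of Fact \ref{basicsnf}, and since rank is transpose-invariant, $\pr W^\T(G)=n-2$ as well. Consequently the solution space of $W^\T(G)x\equiv 0\pmod p$ over $\mathbb{F}_p$ is exactly two-dimensional, so it suffices to produce two linearly independent solutions.

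Next I would check that both $\alpha(G)$ and $\beta(G)$ lie in this space. For $\alpha(G)$ this is immediate: Proposition \ref{eigQ} gives $W^\T(G)\alpha(G)=0$ over $\mathbb{Z}$, hence mod $p$, and $\alpha(G)\not\equiv 0\pmod p$ since its nonzero entries are $\pm 1$. For $\beta(G)$ a small upgrade is needed. By definition $\beta(G)$ solves $\hat{W}^\T(G)x\equiv 0\pmod p$, so it is orthogonal mod $p$ to each of $e,A(G)e,\ldots,A^{n-2}(G)e$ and to $\alpha(G)$. In particular it is orthogonal to the first $n-2$ columns $e,A(G)e,\ldots,A^{n-3}(G)e$ of $W(G)$; since $\pr W(G)=n-2$, Lemma \ref{first} asserts that these $n-2$ columns already span the column space of $W(G)$ over $\mathbb{F}_p$, so the remaining column $A^{n-1}(G)e$ is a mod-$p$ combination of them. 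Therefore $\beta(G)$ is orthogonal to every column of $W(G)$, which gives $W^\T(G)\beta(G)\equiv 0\pmod p$, and $\beta(G)\not\equiv 0\pmod p$ by construction.

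Finally I would prove linear independence, where the one genuinely useful input is that $\alpha^\T(G)\alpha(G)=2$ is a unit mod the odd prime $p$. Suppose $\beta(G)\equiv c\,\alpha(G)\pmod p$ for some scalar $c$. The defining property $\alpha^\T(G)\beta(G)\equiv 0\pmod p$ then yields $0\equiv c\,\alpha^\T(G)\alpha(G)=2c\pmod p$, forcing $c\equiv 0$ and hence $\beta(G)\equiv 0$, contradicting nontriviality. Thus $\alpha(G)$ and $\beta(G)$ are independent, and being two independent vectors in a two-dimensional solution space they form a fundamental system; the asserted relation $\alpha^\T(G)\beta(G)\equiv 0\pmod p$ is exactly the last defining equation of $\beta(G)$. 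I expect the only mildly delicate step to be the orthogonality upgrade for $\beta(G)$, where one must invoke Lemma \ref{first} rather than naively assume that annihilating the columns of the truncated matrix forces annihilation of the full walk matrix; everything else is bookkeeping with the Smith normal form and the unit $2$ modulo $p$.
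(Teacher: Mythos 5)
Your proof is correct and follows essentially the same route as the paper's: count the dimension of the solution space via $\pr W(G)=n-2$, observe that $\alpha(G)$ and $\beta(G)$ are nontrivial solutions, and derive linear independence from $\alpha^\T(G)\alpha(G)=2$ being a unit modulo the odd prime $p$. The only difference is that you explicitly justify, via Lemma~\ref{first}, why annihilating the columns of $\hat{W}(G)$ forces $W^\T(G)\beta(G)\equiv 0\pmod{p}$ — a step the paper dismisses as ``clearly'' — which is a worthwhile (and correct) elaboration.
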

\begin{proof}
	Clearly, by the definitions of $\alpha(G)$ and $\beta(G)$, both $\alpha(G)$ and $\beta(G)$ are nontrivial solutions to $W^\T(G)x\equiv 0\pmod{p}$. Also, $\alpha^\T(G)\beta(G)\equiv 0\pmod{p}$. Note that $\pr W(G)=n-2$ and hence the solution space of $W^\T(G)x\equiv 0\pmod{p}$ is two dimensional. We are done if  $\alpha(G)$ and $\beta(G)$ are linearly independent over $\mathbb{F}_p$. Suppose to the contrary that  $\alpha(G)$ and $\beta(G)$ are linearly dependent. Then we have $\beta(G)\equiv t\alpha(G) \pmod{p}$ for some integer $t\not\equiv 0\pmod{p}$ as both $\alpha(G)$ and $\beta(G)$ are nonzero vectors over $\mathbb{F}_p$. Consequently, we have $\alpha(G)^\T \beta(G)\equiv t\alpha(G)^\T\alpha(G)=2t\not\equiv 0\pmod{p}$. This contradiction completes the proof of the proposition.
\end{proof}
\begin{proposition}\label{eigbeta}
	Let $G\in \mathcal{F}_n$. Then there exists an integer $\lambda_0=\lambda_0(G;p)$ such that \begin{equation}\label{def0}
	A(G)\beta(G)\equiv\lambda_0\beta(G)\pmod{p}.
	\end{equation}
\end{proposition}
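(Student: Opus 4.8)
The plan is to show that $A(G)\beta(G)$ again lies in the mod-$p$ nullspace of $W^\T(G)$, and then to eliminate the $\alpha(G)$-component of its expansion in the basis $\{\alpha(G),\beta(G)\}$, leaving a scalar multiple of $\beta(G)$. Throughout I abbreviate $A=A(G)$, $\alpha=\alpha(G)$, $\beta=\beta(G)$, and all congruences are taken modulo the odd prime $p$.

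The first step is to verify that $W^\T(G)\,A\beta\equiv 0$. Since $W(G)=[e,Ae,\ldots,A^{n-1}e]$ and $A$ is symmetric, the $(k+1)$-th entry of $W^\T(G)A\beta$ equals $(A^ke)^\T A\beta=(A^{k+1}e)^\T\beta$ for $k=0,1,\ldots,n-1$. For $k\le n-2$ this is $(A^{k+1}e)^\T\beta$ with $1\le k+1\le n-1$, hence an entry of $W^\T(G)\beta$, which vanishes by the defining property of $\beta$. For $k=n-1$ the entry is $(A^ne)^\T\beta$; here I invoke Lemma~\ref{intcom}, which (since $\r W(G)=n-1$) writes $A^{n-1}e$ as an integral combination of $e,Ae,\ldots,A^{n-2}e$. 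Multiplying this relation by $A$ expresses $A^ne$ as an integral combination of $Ae,\ldots,A^{n-1}e$, each of which is a column of $W(G)$ and hence orthogonal to $\beta$ mod $p$. Therefore $(A^ne)^\T\beta\equiv 0$, and $A\beta\in N(W^\T(G))$ over $\mathbb{F}_p$.

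By Proposition~\ref{abindep}, $\{\alpha,\beta\}$ is a basis of this two-dimensional nullspace, so I may write $A\beta\equiv s\alpha+t\beta$ for some $s,t\in\mathbb{F}_p$. It then remains to show $s\equiv 0$. For this I would compute $\alpha^\T A\beta$ in two ways, using the symmetry of $A$, the eigenrelation $A\alpha=\lambda_1\alpha$ from Proposition~\ref{eigQ}, and the orthogonality $\alpha^\T\beta\equiv 0$ from Proposition~\ref{abindep}. On one hand $\alpha^\T A\beta=(A\alpha)^\T\beta=\lambda_1\,\alpha^\T\beta\equiv 0$; on the other hand $\alpha^\T(s\alpha+t\beta)=s\,\alpha^\T\alpha+t\,\alpha^\T\beta\equiv 2s$, since $\alpha^\T\alpha=2$ (as $\alpha$ has exactly the two nonzero entries $\pm 1$). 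Thus $2s\equiv 0$, and because $p$ is odd, $2$ is a unit and $s\equiv 0$. Setting $\lambda_0=t$ yields $A\beta\equiv\lambda_0\beta$, as required.

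The only genuinely nontrivial point is the first step—ensuring that $A\beta$ does not escape the nullspace—which rests on extending the relation $W^\T(G)\beta\equiv 0$ one power higher via Lemma~\ref{intcom}. Once that is in place, the elimination of the $\alpha$-component is an immediate consequence of the self-adjointness of $A$ and the invertibility of $2$ modulo the odd prime $p$.
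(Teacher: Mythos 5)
Your proof is correct and follows essentially the same route as the paper's: show that $A\beta$ remains in the two-dimensional nullspace $N(W^\T(G))$ modulo $p$, expand it in the basis $\{\alpha,\beta\}$ from Proposition~\ref{abindep}, and kill the $\alpha$-component by computing $\alpha^\T A\beta$ two ways using $A\alpha=\lambda_1\alpha$, $\alpha^\T\beta\equiv 0$, and the fact that $\alpha^\T\alpha=2$ is a unit for odd $p$. The only cosmetic difference is that the paper justifies the invariance step via the Cayley--Hamilton theorem while you invoke Lemma~\ref{intcom}; both are valid.
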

\begin{proof}
	As $G$ and $p$ are fixed in the following argument, we use simplified notations, for example $W$ means $W(G)$.	We first show that $N(W^\T)$ is $A$-invariant over $\mathbb{F}_p$. Let $x$ be any vector in $N(W^\T)$, that is, $W^\T x\equiv 0\pmod{p}$. Now we have $e^\T A^i x\equiv 0\pmod{p}$ for $i=0,1,\ldots,n-1$. By Cayley-Hamilton Theorem, we know that $A^n$ can be written as a linear combination of $A^0,A^1,\ldots,A^{n-1}$. Thus we have $e^\T A^n x\equiv 0\pmod{p}$ and hence  $W^\T Ax\equiv 0\pmod{p}$. This proves that $N(W^\T)$ is $A$-invariant.
	
	By Proposition \ref{abindep}, we have $N(W^\T)=\span(\alpha,\beta)$. As $N(W^\T)$ is $A$-invariant, there exist two integers $c_1$ and $c_2$ such that $A\beta\equiv c_1\alpha+c_2\beta\pmod{p}$. Then, as  $\alpha^\T\alpha=2$ and $\alpha^\T\beta\equiv 0\pmod{p}$, we have
	\begin{equation}
	\alpha^\T A\beta\equiv c_1\alpha^\T\alpha+c_2\alpha^\T\beta\equiv 2c_1\pmod{p}.
	\end{equation}
	On the other hand, as $A\alpha=\lambda_1\alpha$ and $A$ is symmetric, we also have
	\begin{equation}
	\alpha^\T A\beta=\beta^\T A\alpha\equiv\lambda_1\beta^\T \alpha\equiv 0\pmod{p}.
	\end{equation}
	It follows that  $2c_1\equiv 0\pmod{p}$, i.e., $c_1\equiv 0\pmod{p}$ as $p$ is odd. Therefore $A\beta\equiv c_2\beta\pmod{p}$. This proves the proposition.
\end{proof}
In the following, we shall use $\lambda_0$, or more precisely $\lambda_0(G;p)$, to denote an integer such that (\ref{def0}) holds. That is, $\lambda_0(G;p)$ is the eigenvalue of $A$ corresponding to $\beta(G;p)$ over $\mathbb{F}_p$. Of course $\lambda_0(G;p)$ is unique up to congruence modulo $p$ and we may safely assume $\lambda_0(G;p)\in\{0,1,\ldots,p-1\}$.

Now we can give an equivalent description of (\ref{addcon}).
\begin{proposition}\label{neqeig}
	Using the notations of Theorem \ref{mainapp}, we have (\ref{addcon}) holds if and only if $\lambda_1(G)\not\equiv \lambda_0(G;p)\pmod{p}$.
\end{proposition}
\begin{proof}
By Proposition \ref{abindep}, we have $N(W^\T (G))=\span(\alpha(G),\beta(G;p))$ over $\mathbb{F}_p$. By Proposition \ref{eigQ}, we have $A(G)\alpha(G)=\lambda_1(G)\alpha(G)$ and of course $A(G)\alpha(G)\equiv \lambda_1(G)\alpha(G)\pmod{p}$. That is, $\alpha(G)\in N(A(G)-\lambda_1(G)I)$ over $\mathbb{F}_p$. Therefore, we find that (\ref{addcon}) holds if and only if
\begin{equation}\label{addcon1}
\beta(G;p)\not\in N(A(G)-\lambda_1(G)I)\text{~over~} \mathbb{F}_p.
\end{equation}
Clearly, (\ref{addcon1}) is equivalent to $A(G)\beta(G;p)\not\equiv\lambda_1(G)\beta(G;p)\pmod{p}$.
On the other hand, by Proposition \ref{eigbeta}, we have $A(G)\beta(G;p)\equiv \lambda_0(G;p)\beta(G;p)\pmod{p}$. Thus,  (\ref{addcon1}) holds if and only if $\lambda_1(G)\not\equiv \lambda_0(G;p)\pmod{p}$. This completes the proof of this proposition.
\end{proof}
\section{Proof of Theorem~\ref{mainapp}}
In this section, we present the proof of Theorem~\ref{mainapp}. Before doing so, we need several results below.
\begin{proposition}\label{elldivdn}
	Let $G\in \mathcal{H}_n^s$ and $Q\in \mathcal{Q}_G$ with level $\ell$. Then
	$\ell\mid d_n$ and hence $\ell\mid\det W_0(G)$, where $d_n$ is the $n$-th invariant factor of $W_0(G)$.
\end{proposition}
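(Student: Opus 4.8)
The plan is to reduce the divisibility $\ell\mid d_n$ to the single integrality statement that $d_nQ$ is an integral matrix. Indeed, by the definition of the level, the positive integers $k$ for which $kQ$ is integral are exactly the multiples of $\ell$; hence $\ell\mid d_n$ is \emph{equivalent} to $d_nQ$ being integral. Once $\ell\mid d_n$ is established, the second assertion is immediate, since $d_n\mid d_1d_2\cdots d_n=\pm\det W_0(G)$ by Fact \ref{basicsnf}(\romannumeral1). So the whole proposition rests on proving that $d_nQ$ is integral.

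To set up, note first that $Q\in\mathcal{Q}_G(H)$ for some $H\in\mathcal{H}_n$, and since $Q^\T A(G)Q=A(H)$ with $Q\in\RO_n$, Theorem \ref{genconQ} tells us $G$ and $H$ are generalized cospectral. Theorem \ref{main} then gives the explicit form $QW_0(H)=W_\delta(G)$ for some $\delta\in\{0,1\}$ (equivalently $Q=W_\delta(G)W_0^{-1}(H)$). The one structural observation I would isolate is that $W_1(G)$ is obtained from $W_0(G)$ by negating its last column, i.e.\ $W_1(G)=W_0(G)\,\diag(1,\ldots,1,-1)$; since this diagonal matrix is unimodular, $W_0(G)$ and $W_1(G)$ are unimodular equivalent and therefore share the same Smith normal form, in particular the same largest invariant factor $d_n$. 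This lets me treat both values of $\delta$ uniformly: in either case the largest invariant factor of $W_\delta(G)$ equals $d_n$.

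The crux is then to pass to the inverse. Because $Q$ is orthogonal, $Q^\T=Q^{-1}$, and from $QW_0(H)=W_\delta(G)$ I get $Q^\T=W_0(H)\,W_\delta^{-1}(G)$. Writing the Smith normal form $W_\delta(G)=USV$ with $U,V$ unimodular and $S=\diag(d_1,\ldots,d_n)$, the matrix $d_nS^{-1}=\diag(d_n/d_1,\ldots,d_n/d_{n-1},1)$ is integral because $d_i\mid d_n$ for all $i$; hence $d_nW_\delta^{-1}(G)=V^{-1}(d_nS^{-1})U^{-1}$ is integral. Consequently $d_nQ^\T=W_0(H)\bigl[d_nW_\delta^{-1}(G)\bigr]$ is a product of integral matrices and so is integral, and therefore $d_nQ$ is integral as well. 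By the reduction in the first paragraph this yields $\ell\mid d_n$, and hence $\ell\mid\det W_0(G)$.

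I expect the only genuinely delicate point to be the \emph{asymmetry} between $G$ and $H$: the naive computation $Q=W_\delta(G)W_0^{-1}(H)$ together with an adjoint bound would only give $\ell\mid\det W_0(H)$, that is, information tied to $H$ rather than to the invariant factors of $G$. The device that repairs this is to invert the relation so that $W_\delta^{-1}(G)$—and hence the invariant factors of $W_\delta(G)$—appears on the correct side, combined with the remark that negating the last column is a unimodular operation, so that $W_0(G)$ and $W_1(G)$ cannot be distinguished by their invariant factors and the argument applies verbatim to both solutions guaranteed by Theorem \ref{main}.
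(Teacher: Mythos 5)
Your argument is correct and its core is the same as the paper's: invert the relation from Theorem \ref{main} to get $Q^\T=W_0(H)W_\delta^{-1}(G)$, factor $W_\delta(G)$ through its Smith normal form, and observe that $d_nS^{-1}$ is integral because every invariant factor divides $d_n$. The one place you diverge is in handling the two solutions $Q_0,Q_1$: the paper invokes Proposition \ref{samelevel} (which is where the hypothesis $G\in\mathcal{H}_n^s$ enters) to reduce to $Q=Q_0$, whereas you note that $W_1(G)=W_0(G)\,\diag(1,\ldots,1,-1)$ is unimodularly equivalent to $W_0(G)$ and so has the same invariant factors, letting you treat both $\delta$ uniformly. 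Your variant is marginally more general --- it proves the statement for all $G\in\mathcal{H}_n$ without using symmetry --- but otherwise the two proofs are the same.
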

\begin{proof}
	Let $H$ be a graph such that $Q\in \mathcal{Q}_G(H)$. By Theorem \ref{main}, $Q=Q_0$ or $Q=Q_1$, where $Q_0=W_0(G)W_0^{-1}(H)$ and $Q_1=W_1(G)W_0^{-1}(H)$. As $G\in \mathcal{H}_n^s$, the two matrices $Q_0$ and $Q_1$ must have the same level by Proposition \ref{samelevel}. Thus, we may assume that $Q=Q_0$. As $Q_0$ is orthogonal, we have $Q_0^\T=Q_0^{-1}$ and hence $Q_0^\T=W_0(H)W_0^{-1}(G)$. Clearly, $\ell(Q_0)=\ell(Q_0^\T)$ and hence we have $\ell=\ell(Q_0)=\ell(W_0(H)W_0^{-1}(G))$.
	
	Write $W_0(G)=U\diag(d_1,d_2,\ldots,d_n)V$ where $U,V$ are unimodular and $d_1,d_2,\ldots,d_n$ are invariant factors of $W_0(G)$. Note that
	$$d_nW_0(H)W_0^{-1}(G)=W_0(H)V^{-1}\diag\left(\frac{d_n}{d_1},\frac{d_n}{d_2},\ldots,\frac{d_n}{d_n}\right)U^{-1}.$$
	Thus $d_nW_0(H)W_0^{-1}(G)$ is integral as all four factors in the right hand  are integral matrices. Therefore, by the minimality of $\ell(W_0(H)W_0^{-1}(G))$, we must have $\ell(W_0(H)W_0^{-1}(G))\mid d_n$, that is, $\ell\mid d_n$.  This proves the proposition.
\end{proof}
\begin{proposition}\label{sameodd}
	Let $G\in \mathcal{F}_n$ with twin vertices $\tau$ and $\tau'$. Then
	$\det W_0=2^{\lfloor\frac{n}{2}\rfloor} b^2$ and
	$\det \hat{W}(G)=\pm 2^{\lfloor\frac{n}{2}\rfloor} b$, where $b$ is defined in (\ref{defb}).
\end{proposition}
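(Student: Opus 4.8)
The plan is to express both determinants in terms of the single quantity $W_{\tau,n}(G)$ (the algebraic cofactor of Definition~\ref{defxi}), whose magnitude is pinned down by Proposition~\ref{detsnf2}, and then simply read off the two values. Throughout I write $V=[e,A(G)e,\ldots,A^{n-2}(G)e]$, so that $\hat{W}(G)=[V,\alpha(G)]$ and $W_0(G)=[V,\frac{1}{2^{\lfloor\frac{n}{2}\rfloor-1}}\xi(G)]$.

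For the first determinant I would start from Corollary~\ref{r2} with $\delta=0$, which gives the exact formula $\det W_0(G)=2^{1-\lfloor\frac{n}{2}\rfloor}\,\xi^\T(G)\xi(G)$. By Proposition~\ref{eigQ} we have $\xi(G)=-W_{\tau,n}(G)\,\alpha(G)$, and since $\alpha(G)$ has exactly the two nonzero entries $\pm1$ we get $\alpha^\T(G)\alpha(G)=2$, whence $\xi^\T(G)\xi(G)=2\,W_{\tau,n}^2(G)$. Proposition~\ref{detsnf2} (equation~(\ref{defb})) supplies $|W_{\tau,n}(G)|=2^{\lfloor\frac{n}{2}\rfloor-1}b$, so $\xi^\T(G)\xi(G)=2\cdot 2^{2\lfloor\frac{n}{2}\rfloor-2}b^2=2^{2\lfloor\frac{n}{2}\rfloor-1}b^2$, and therefore $\det W_0(G)=2^{1-\lfloor\frac{n}{2}\rfloor}\cdot 2^{2\lfloor\frac{n}{2}\rfloor-1}b^2=2^{\lfloor\frac{n}{2}\rfloor}b^2$. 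The sign is automatically $+$ because $\xi^\T(G)\xi(G)$ is a nonnegative sum of squares.

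For the second determinant the cleanest route is to relate $\hat{W}(G)$ directly to $W_0(G)$, since they share the first $n-1$ columns $V$. Using $\xi(G)=-W_{\tau,n}(G)\alpha(G)$ together with $|W_{\tau,n}(G)|=2^{\lfloor\frac{n}{2}\rfloor-1}b$, the last column of $W_0(G)$ is $\frac{1}{2^{\lfloor\frac{n}{2}\rfloor-1}}\xi(G)=\pm b\,\alpha(G)$. Hence $\det W_0(G)=\pm b\,\det[V,\alpha(G)]=\pm b\,\det\hat{W}(G)$, and combining with the first part yields $\det\hat{W}(G)=\pm 2^{\lfloor\frac{n}{2}\rfloor}b$. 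Equivalently, one may expand $\det\hat{W}(G)$ along its last column $\alpha(G)$: as the first $n-1$ columns agree with those of $W(G)$, the only surviving cofactors are $W_{\tau,n}(G)$ and $W_{\tau',n}(G)$, and Proposition~\ref{eigQ} forces $W_{\tau',n}(G)=-W_{\tau,n}(G)$, so that $\det\hat{W}(G)=-W_{\tau,n}(G)+W_{\tau',n}(G)=-2W_{\tau,n}(G)$, of magnitude $2^{\lfloor\frac{n}{2}\rfloor}b$.

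This is essentially a bookkeeping computation, so I do not anticipate a serious obstacle; the only point requiring care is the sign. The first identity holds with a definite positive sign precisely because $\det W_0(G)$ is a positive multiple of the sum of squares $\xi^\T(G)\xi(G)$, whereas the second is genuinely determined only up to sign, matching the $\pm$ in the statement (the undetermined sign tracks the parity-type factors $(-1)^{n+\tau}$ and the relative position of the twins $\tau<\tau'$, which need not be resolved for the proposition).
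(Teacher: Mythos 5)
Your argument is correct and matches the paper's proof essentially step for step: both compute $\det W_0(G)=2^{1-\lfloor\frac{n}{2}\rfloor}\xi^\T(G)\xi(G)$ by expanding along the last column, reduce $\xi^\T(G)\xi(G)$ to $2W_{\tau,n}^2(G)$ via Proposition~\ref{eigQ} and $\alpha^\T(G)\alpha(G)=2$, invoke (\ref{defb}), and then obtain $\det\hat{W}(G)$ by observing that $\hat{W}(G)$ differs from $W_0(G)$ only by a scalar rescaling of the last column. Your extra care with the sign (and the alternative cofactor expansion of $\hat{W}(G)$ along $\alpha(G)$) is fine but not a different method.
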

%

\begin{proof}
 Expanding $\det W_0(G)$ by the last column and using the last assertion of Proposition \ref{eigQ}, we have
		\begin{equation*}\label{detw0}\det W_0(G)=2^{1-\lfloor\frac{n}{2}\rfloor}\xi^\T (G)\xi(G)=2^{1-\lfloor\frac{n}{2}\rfloor}W_{\tau,n}^2(G)\alpha^\T\alpha=2^{2-\lfloor\frac{n}{2}\rfloor}W_{\tau,n}^2(G).
		\end{equation*}
	By (\ref{defb}), $W_{\tau,n}(G)=\pm 2^{\lfloor\frac{n}{2}\rfloor -1}b$ and consequently, we have $\det W_0(G)=2^{\lfloor\frac{n}{2}\rfloor}b^2.$
	
	Finally, by the definition of $\hat{W}(G)$, one easily sees  that $\hat{W}(G)$ can be obtained from $W_0(G)$ after dividing the last column by $2^{1-\lfloor\frac{n}{2}\rfloor}W_{\tau,n}(G)$. Thus, $\det \hat{W}(G)=2W_{\tau,n}(G)=\pm 2^{\lfloor\frac{n}{2}\rfloor}b$. This completes the proof of this proposition.
\end{proof}

An immediate consequence of the two preceding propositions is the following corollary.
\begin{corollary}\label{pdn1}
		Let $G\in \mathcal{H}_n^s$ and $Q\in \mathcal{Q}_G$ with level $\ell$. Then each odd prime factor of  $\ell$ is a factor of $b$, where $b$ is defined in (\ref{defb}).
	\end{corollary}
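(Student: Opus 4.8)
The plan is to combine the two preceding propositions in the most direct way. Corollary~\ref{pdn1} asserts that, for $G\in \mathcal{H}_n^s$ and $Q\in \mathcal{Q}_G$ with level $\ell$, every odd prime factor of $\ell$ divides $b$. The key is to route the argument through the invariant factor $d_n$ of $W_0(G)$. First I would invoke Proposition~\ref{elldivdn}, which gives $\ell\mid d_n$; in particular every prime factor of $\ell$ (odd or not) divides $d_n$, and since $d_n\mid \det W_0(G)$, every prime factor of $\ell$ divides $\det W_0(G)$.

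Next I would compute $\det W_0(G)$ explicitly. This is exactly the content of Proposition~\ref{sameodd}, which (for $G\in\mathcal{F}_n$) gives $\det W_0(G)=2^{\lfloor n/2\rfloor}b^2$. So if $p$ is an \emph{odd} prime factor of $\ell$, then $p\mid \det W_0(G)=2^{\lfloor n/2\rfloor}b^2$; since $p$ is odd it does not divide the power of $2$, whence $p\mid b^2$, and as $p$ is prime, $p\mid b$. That closes the chain $p\mid \ell \Rightarrow p\mid d_n \Rightarrow p\mid \det W_0(G) \Rightarrow p\mid b$.

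The only gap to patch is that Proposition~\ref{sameodd} is stated for $G\in\mathcal{F}_n$, whereas Corollary~\ref{pdn1} is stated for the larger class $G\in\mathcal{H}_n^s$. I would therefore either restrict the corollary's hypothesis to $\mathcal{F}_n$ (the quantity $b$ in~(\ref{defb}) is in any case only guaranteed to be defined, odd and square-free for graphs whose walk matrix has the SNF of $\mathcal{F}_n$, by Proposition~\ref{detsnf2}), or note that for $G\in\mathcal{H}_n^s$ the identical determinant computation $\det W_0(G)=2^{2-\lfloor n/2\rfloor}W_{\tau,n}^2(G)$ from the proof of Proposition~\ref{sameodd} still holds, so that any odd prime factor of $\ell$ divides $W_{\tau,n}(G)$, which is the appropriate generalization. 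I expect no genuine obstacle here: the proof is a two-line deduction once the determinant formula and the divisibility $\ell\mid d_n$ are in hand. The one point requiring care is precisely this bookkeeping about which graph class $b$ is defined for, ensuring the odd-prime and power-of-two factors of $\det W_0(G)$ are cleanly separated before concluding $p\mid b$.
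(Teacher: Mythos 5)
Your proof is correct and is exactly the paper's intended argument: the paper gives no explicit proof, merely declaring the corollary ``an immediate consequence of the two preceding propositions,'' i.e.\ the chain $p\mid\ell\Rightarrow p\mid d_n\mid\det W_0(G)=2^{\lfloor n/2\rfloor}b^2\Rightarrow p\mid b$ for odd $p$, via Propositions~\ref{elldivdn} and~\ref{sameodd}. Your observation about the class mismatch ($\mathcal{H}_n^s$ versus $\mathcal{F}_n$) is a fair catch, and your resolution --- that the identity $\det W_0(G)=2^{2-\lfloor n/2\rfloor}W_{\tau,n}^2(G)$ holds for all of $\mathcal{H}_n^s$ with $b$ read off from~(\ref{defb}) --- is the right way to reconcile the statement with the cited propositions.
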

\subsection{The case $p$ is odd}\label{oddp}
Let $G\in \mathcal{F}_n$ and $2b$ be the $(n-1)$-th invariant factor of $W(G)$. We assume that $p$ is an odd prime factor of $b$ in this subsection. The main aim of this subsection is to show the following theorem.
\begin{theorem}\label{mainodd}
		Let $G\in \mathcal{F}_n$ and $Q\in \mathcal{Q}_G$ with level $\ell$. If (\ref{addcon}) holds then $p\nmid \ell$.
\end{theorem}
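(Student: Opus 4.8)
The plan is to prove Theorem~\ref{mainodd} by contradiction: assume $p \mid \ell$ and derive a violation of condition~(\ref{addcon}), which by Proposition~\ref{neqeig} is equivalent to $\lambda_1(G) \not\equiv \lambda_0(G;p) \pmod{p}$. So the strategy is to show that $p \mid \ell$ forces $\lambda_0(G;p) \equiv \lambda_1(G) \pmod p$, contradicting the hypothesis. First I would fix a graph $H$ with $Q \in \mathcal{Q}_G(H)$ and, by Proposition~\ref{samelevel}, assume $Q = Q_0 = W_0(G)W_0^{-1}(H)$. Since $\ell Q$ is integral but $(\ell/p) Q$ is not, there should be a column (or row) vector witnessing that $p$ genuinely divides into the denominator; I expect to extract from this a nonzero vector $v \pmod p$ lying in the nullspace $N(W^\T(G))$ over $\mathbb{F}_p$.

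The key mechanism I would exploit is the relation $W(H) = Q^\T W(G)$, equivalently $\hat{Q}^\T W(G) = W(H)$, combined with the congruence structure of $W_0$. Because $p \mid \ell$, reducing the identity $Q_0 W_0(H) = W_0(G)$ modulo appropriate powers of $p$ should produce a vector $x \not\equiv 0 \pmod p$ with $W_0(G) x \equiv 0 \pmod{p^2}$ or a comparable congruence; here Fact~\ref{basicsnf}(\romannumeral4) is the natural tool, linking solvability of $Mx \equiv 0 \pmod{p^2}$ with $p^2 \mid d_n$. Since $\det W_0(G) = 2^{\lfloor n/2\rfloor} b^2$ by Proposition~\ref{sameodd} and $p \mid b$, we have $p^2 \mid \det W_0(G)$, so the $p$-adic structure of $W_0(G)$ genuinely supports such a nontrivial solution. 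I would then transfer this solution back to $W^\T(G)$, using that $N(W^\T(G)) = \span(\alpha(G), \beta(G;p))$ over $\mathbb{F}_p$ by Proposition~\ref{abindep}, to conclude that the relevant reduced vector is a combination of $\alpha$ and $\beta$.

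The heart of the argument—and the main obstacle—will be showing that the orthogonality constraint $Q_0^\T Q_0 = I$, together with $Q_0$ fixing $e$, forces the $\pmod p$ data to collapse onto a single eigendirection in such a way that $\beta$ inherits the eigenvalue $\lambda_1$. Concretely, I expect to examine how $Q_0$ acts on $\alpha$ and $\beta$ modulo $p$: since $A(G)\alpha = \lambda_1 \alpha$ and $A(G)\beta \equiv \lambda_0 \beta \pmod p$, and since $Q_0$ intertwines $A(G)$ with $A(H)$, the level-$p$ obstruction should manifest as a forced identification forcing $\lambda_0 \equiv \lambda_1 \pmod p$. The delicate point is controlling the interaction between the two-dimensional nullspace and the orthogonality: unlike the controllable case where the nullspace is trivial, here I must carefully track which combination $c_1 \alpha + c_2 \beta$ is annihilated modulo higher powers of $p$, and rule out the degenerate possibility $c_2 \equiv 0$ (which would only recover information about $\alpha$ and say nothing about $\lambda_0$). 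The use of $\alpha^\T \alpha = 2 \not\equiv 0 \pmod p$ (valid as $p$ is odd) and $\alpha^\T \beta \equiv 0 \pmod p$ from Proposition~\ref{abindep} will be essential to separate the two components.

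Assuming that separation goes through, the final step is routine: the $p$-level obstruction yields $A(G)\beta \equiv \lambda_1 \beta \pmod p$, hence $\lambda_0 \equiv \lambda_1 \pmod p$, contradicting~(\ref{addcon}) via Proposition~\ref{neqeig}; therefore $p \nmid \ell$. I anticipate that carrying out the $p^2$-level analysis correctly—deciding exactly which congruence modulo $p^2$ (as opposed to $p$) the orthogonality of $Q_0$ forces, and how Fact~\ref{basicsnf}(\romannumeral4) feeds the square-free hypothesis on $b$ into ruling out $p^2 \mid d_n$ beyond what is needed—will require the most care, since this is precisely where the square-free assumption on $b$ must be invoked to prevent a spurious higher-order solution.
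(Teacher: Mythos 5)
Your overall plan --- assume $p\mid\ell$, extract from the integrality defect of $\ell Q$ a vector in $N(W^\T(G))$ over $\mathbb{F}_p$, and then force the eigenvalue identification $\lambda_0\equiv\lambda_1\pmod p$ to contradict (\ref{addcon}) via Proposition~\ref{neqeig} --- fails at its central step. When a column $q$ of $\ell Q$ reduces mod $p$ to $c_1\alpha+c_2\beta$ with $c_2\not\equiv 0$, the orthogonality relation $q^\T q\equiv 0\pmod p$ only yields the quadratic-residue condition $(\bar c_2 c_1)^2\equiv-\frac{p+1}{2}\beta^\T\beta\pmod p$ (Proposition~\ref{mainp}(i)). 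The eigendirection collapse you are counting on happens only when $c_1\not\equiv 0$, i.e.\ when $\beta^\T\beta\not\equiv 0\pmod p$; in the complementary case $\beta^\T\beta\equiv 0\pmod p$ the column is (a multiple of) $\beta$ alone, $A q\equiv\lambda_0 q$ is automatically consistent, and no identification of $\lambda_0$ with $\lambda_1$ is forced at the mod-$p$ level. That is precisely the case the hypothesis (\ref{addcon}) puts you in: under (\ref{addcon}) one shows $c_0\equiv 0$ and $\gamma_0=\beta$ (Corollary~\ref{mainpforsym}), so your intended contradiction with (\ref{addcon}) never materializes. The paper's contradiction is instead with the \emph{square-freeness of $b$}: one must push the analysis to modulus $p^2$, using $\beta^\T(A-\lambda_0 I)\beta\equiv 0\pmod{p^2}$ and the rank computations of Lemma~\ref{prn1}, to manufacture a vector $x\not\equiv 0\pmod p$ with $\hat W^\T(G)x\equiv 0\pmod{p^2}$ (Lemma~\ref{wxp2}), whence $p^2\mid\det\hat W(G)=\pm 2^{\lfloor n/2\rfloor}b$ by Fact~\ref{basicsnf}(iv), contradicting square-freeness.

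A second, related defect: you propose to run the $p^2$-level argument against $W_0(G)$, observing that $p^2\mid\det W_0(G)=2^{\lfloor n/2\rfloor}b^2$ "supports" a nontrivial solution. But that divisibility is exactly why $W_0(G)$ is useless here --- a solution of $W_0^\T(G)x\equiv 0\pmod{p^2}$ with $x\not\equiv 0\pmod p$ is perfectly compatible with $b$ square-free, since $b$ enters $\det W_0(G)$ squared (indeed $\ell\mid d_n(W_0(G))$ by Proposition~\ref{elldivdn}, and Example~\ref{countexp} shows odd levels genuinely occur). The whole point of introducing $\hat W(G)$, with $\det\hat W(G)=\pm 2^{\lfloor n/2\rfloor}b$ carrying only a single factor of $b$, is to make the mod-$p^2$ solution collide with square-freeness. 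Without replacing $W_0$ by $\hat W$ and without the mod-$p^2$ construction of Lemmas~\ref{partwy}--\ref{wxp2}, the proof does not close.
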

We begin with the  following general result which illustrates basic properties of $Q\in\mathcal{Q}_G$ for $G\in \mathcal{F}_n$.  We remind the reader that this result does not rely on (\ref{addcon}).
\begin{proposition}\label{mainp}
	Let $G\in \mathcal{F}_n$ and $Q\in \mathcal{Q}_G$ with  level $\ell$.  	Let $\tau$ and $\tau'$ be twin vertices of $G$ and let $P$ be the permutation matrix obtained from $I_n$ by swapping row $\tau$ and row $\tau'$. If $p$ is an odd prime factor of $\ell$, then the followings hold.
	
\noindent	\textup{(\romannumeral1)}	$-\frac{p+1}{2}\beta^\T(G)\beta(G)$ is a quadratic residue modulo $p$. Let $ c_0$ and $-c_0$ be the two solutions of
\begin{equation}\label{qr}
t^2\equiv  -\frac{p+1}{2}\beta^\T(G)\beta(G) \pmod{p}
\end{equation}
and write $\gamma_\delta(G)=\beta(G)+(-1)^\delta c_0\alpha(G)$ for $\delta=0,1$.

\noindent	\textup{(\romannumeral2)}
$P\gamma_\delta(G)=\gamma_{1-\delta}(G)$ for $\delta=0,1$.

\noindent	\textup{(\romannumeral3)}
 The column space of $\ell Q$, over $\mathbb{F}_p$, is spanned by either $\gamma_0(G)$ or $\gamma_1(G)$. In particular,  $\pr \ell Q=1$.

\noindent	\textup{(\romannumeral4)}
$A(G)\gamma_\delta(G)\equiv  \lambda_0\gamma_\delta(G)\pmod{p}$, for $\delta=0,1$.

\noindent	\textup{(\romannumeral5)}
$\gamma^\T_\delta(G) (A(G)-\lambda_0I_n)\gamma_\delta(G)\equiv 0\pmod{p^2}$,  for $\delta=0,1$.

\end{proposition}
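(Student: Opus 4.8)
\textbf{Proof proposal for Proposition \ref{mainp}.}

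The plan is to extract, from the hypothesis that $p\mid\ell$, enough arithmetic structure on the column space of $\ell Q$ over $\mathbb{F}_p$ to pin down its spanning vector inside the known two-dimensional space $N(W^\T(G))=\span(\alpha,\beta)$. First I would set up the key congruences. Writing $H$ for the graph with $Q\in\mathcal{Q}_G(H)$ and recalling $W(H)=Q^\T W(G)$, I would exploit $\ell Q\equiv$ (integral matrix) together with orthogonality $Q^\T Q=I_n$ to show that every column of $\ell Q$, reduced mod $p$, lies in $N(W^\T(G))$. The idea is that $W^\T(G)(\ell Q)=\ell W^\T(H)$, and since $p\mid\ell$ while $\frac{1}{\ell}\cdot\ell W^\T(H)$ is integral, the reduction forces $W^\T(G)\cdot(\ell Q)\equiv 0\pmod p$; hence each column of $\ell Q$ is a combination $u\alpha+v\beta$ mod $p$. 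This is what makes parts (\romannumeral3) and (\romannumeral4) plausible: $A$-invariance of $N(W^\T)$ (already proved in Proposition \ref{eigbeta}) combined with $A(G)(\ell Q)=\ell Q\, A(H)$ should force the columns to be eigenvectors for $\lambda_0$ mod $p$, and since $\alpha$ is a $\lambda_1$-eigenvector with $\lambda_1\not\equiv\lambda_0$ in the relevant regime, the column space must collapse onto the $\lambda_0$-eigendirection, giving $\pr\ell Q=1$.

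Next I would determine the precise spanning vector by using orthogonality quantitatively. Since $\ell Q$ has rank one mod $p$, write its column space as spanned by some $\gamma=\beta+c\alpha$ (any vector of $N(W^\T)$ not proportional to $\alpha$ can be normalized this way, using $\beta$'s last-entry normalization). The orthogonality relation $(\ell Q)^\T(\ell Q)=\ell^2 I_n\equiv 0\pmod{p^2}$, reduced mod $p$, says the Gram matrix of the columns vanishes; because all columns are scalar multiples of $\gamma$ mod $p$, this yields $\gamma^\T\gamma\equiv 0\pmod p$. Expanding with $\alpha^\T\alpha=2$, $\alpha^\T\beta\equiv 0$ (Proposition \ref{abindep}) gives $\beta^\T\beta+2c^2\equiv 0$, i.e. $c^2\equiv-\tfrac12\beta^\T\beta$. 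To land on the exact form $-\frac{p+1}{2}\beta^\T\beta$ in (\ref{qr}) I would use $\tfrac12\equiv\frac{p+1}{2}\pmod p$, which identifies $c$ with $\pm c_0$ and simultaneously proves (\romannumeral1), namely that the right-hand side is a quadratic residue (it must be, since a solution $c$ exists). The two sign choices give exactly $\gamma_0,\gamma_1$, and (\romannumeral2) then follows from $P\alpha=-\alpha$, $P\beta\equiv\beta$ (as $\beta\in N(W^\T)$ and swapping twins fixes $N(W^\T)$ pointwise up to the sign action on $\alpha$), so $P\gamma_\delta=\beta-(-1)^\delta c_0\alpha=\gamma_{1-\delta}$.

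For (\romannumeral5), the sharper mod $p^2$ statement, I would again start from $\ell^2 I_n=(\ell Q)^\T(\ell Q)$ but now retain information modulo $p^2$ rather than mod $p$. Combining this with $A(G)(\ell Q)=(\ell Q)A(H)$ and the fact that $A(H)=Q^\T A(G)Q$ is symmetric, one computes $(\ell Q)^\T(A(G)-\lambda_0 I)(\ell Q)$ in two ways; the diagonal entries, expanded in the basis via part (\romannumeral3), should produce $\gamma_\delta^\T(A-\lambda_0 I)\gamma_\delta$ multiplied by the squared scalars, and the left side is divisible by $p^2$ because each column of $\ell Q$ carries a factor of $p$ out of the rank-one reduction. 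I expect the main obstacle to be precisely this lift from mod $p$ to mod $p^2$: establishing (\romannumeral4) mod $p$ is comparatively routine, but (\romannumeral5) requires tracking the ``extra'' factor of $p$ coming from $\ell$ and showing it is not absorbed spuriously, i.e. that the columns of $\ell Q$ are genuinely $\equiv$ (scalar)$\cdot\gamma_\delta$ with the scalar divisible by $p$ in the right places. This is the step that ultimately feeds into Fact \ref{basicsnf}(\romannumeral4) to contradict square-freeness of $b$, so getting the $p^2$ bookkeeping exactly right is where I would concentrate the care.
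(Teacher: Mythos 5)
Your setup is the same as the paper's: reduce $\ell Q$ mod $p$, observe that $W^\T(G)(\ell Q)=\ell W^\T(H)\equiv 0\pmod p$ puts every column into $N(W^\T(G))=\span(\alpha,\beta)$, and use $q^\T q\equiv 0\pmod p$ together with $\alpha^\T\alpha=2$, $\alpha^\T\beta\equiv 0$ to get the quadratic residue condition in (\romannumeral1); parts (\romannumeral1) and (\romannumeral2) are essentially right (for (\romannumeral2) note the paper needs the exact equality $b_\tau=b_{\tau'}$, which follows from $\alpha^\T\beta\equiv 0$ plus the normalization of $\beta$'s entries to $\{0,\ldots,p-1\}$, not merely from $P$ preserving $N(W^\T)$).

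The genuine gap is in (\romannumeral3), and it propagates. You propose to get $\pr \ell Q=1$ by arguing that the columns are forced to be $\lambda_0$-eigenvectors while $\alpha$ is a $\lambda_1$-eigenvector with $\lambda_1\not\equiv\lambda_0$ ``in the relevant regime.'' But this proposition is stated and used \emph{without} assuming (\ref{addcon}) --- the paper explicitly remarks on this, and Example 4 applies it precisely when $\lambda_0\equiv\lambda_1\pmod p$; in that case all of $\span(\alpha,\beta)$ consists of $\lambda_0$-eigenvectors and your collapse argument gives nothing. Moreover, even when $\lambda_1\not\equiv\lambda_0$, the relation $A(G)(\ell Q)=(\ell Q)A(H)$ only shows the column space is $A(G)$-invariant; it does not make individual columns eigenvectors, and in the paper the eigenvector statement (\romannumeral4) is \emph{deduced from} rank one (via the factorization $\ell Q\equiv q\,(m_1,\ldots,m_n)\pmod p$), not the other way around, so your route is circular. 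The missing ingredient is the off-diagonal part of the Gram identity $(\ell Q)^\T(\ell Q)\equiv 0\pmod{p}$: after normalizing, every nonzero column is $\beta\pm c_0\alpha$, and if two columns realized opposite signs their difference would be $2c_0\alpha$ with $(2c_0\alpha)^\T(2c_0\alpha)=8c_0^2\equiv 0\pmod p$, forcing $c_0\equiv 0$; this sign-consistency argument is what yields (\romannumeral3) in both regimes. Without (\romannumeral3) you also lose the decomposition $\gamma_0=tq+p\eta$ on which (\romannumeral5) rests, where the cross term $2tp\,\eta^\T(A-\lambda_0 I)q$ is killed modulo $p^2$ precisely because $(A-\lambda_0 I)q\equiv 0\pmod p$ --- a point your sketch of (\romannumeral5) gestures at but does not supply.
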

\begin{proof}
		Let $H$ be the graph such that $Q\in \mathcal{Q}_G(H)$. As $Q^\T A(G)Q=A(H)$, we have $Q^\T W(G)=W(H)$, i.e., $W^\T(G)Q =W^\T(H)$.
		
		Let $\hat{Q}=\ell Q$. By the definition of $\ell$, the matrix $\hat{Q}$ is integral and contains a column $q$ such that $q\not\equiv 0\pmod{p}$. As $\hat{Q}^\T \hat{Q}=\ell^2 I_n$ and $p\mid \ell$, we have $q^\T q=\ell^2\equiv 0\pmod{p^2}$ and hence $q^\T q\equiv 0\pmod{p}.$ As $W^\T(G)\hat{Q}=\ell W^\T(H)\equiv 0\pmod{p}$, we see that $W^\T(G)q\equiv 0\pmod{p}$. It follows from  Proposition \ref{abindep} that there exist two integers $c_1$ and $c_2$ such that $q\equiv c_1\alpha(G)+c_2\beta(G)\pmod{p}$. We claim that $c_2\not\equiv 0\pmod{p}$. If $ c_2\equiv 0\pmod{p}$, that is, $q\equiv c_1\alpha(G)\pmod{p}$, then $q^\T q\equiv c_1^2\alpha^\T(G) \alpha(G)\equiv 2c_1^2\pmod{p}$ and hence $c_1\equiv 0\pmod{p}$ as $q^\T q\equiv 0\pmod{p}$. Consequently, $q\equiv 0\pmod{p}$, contradicting  our choice of $q$. This proves the claim.
		
		As $c_2\not\equiv 0\pmod{p}$, there exists an integer $\bar{c}_2$ such that $c_2\bar{c}_2\equiv 1\pmod{p}$. Let $\bar{q}=\bar{c}_2 q$. Then we have $\bar{q}\equiv \bar{c}_2c_1\alpha(G)+\beta(G)\pmod{p}.$ As $\alpha^\T(G)\alpha=2$ and $\alpha^\T(G)\beta(G)\equiv 0\pmod{p}$, we have $\bar{q}^\T\bar{q}\equiv 2(\bar{c}_2c_1)^2+\beta^\T(G)\beta(G)\pmod{p}$. On the other hand, as $q^\T q\equiv 0\pmod{p}$, we have $\bar{q}^\T\bar{q}=(\bar{c}_2)^2q^\T q\equiv 0\pmod{p}$. Therefore,
		$2(\bar{c}_2c_1)^2+\beta^\T(G)\beta(G)\equiv 0\pmod{p}$, that is, $$(\bar{c}_2c_1)^2\equiv -\frac{p+1}{2}\beta^\T(G)\beta(G)\pmod{p}.$$
		This proves (\romannumeral1).

		Write $\beta(G)=(b_1,b_2,\ldots,b_n)^\T$. As $\alpha^\T(G)\beta(G)\equiv 0\pmod{p}$, we easily see from (\ref{defalpha}) that $b_\tau\equiv b_{\tau'}\pmod{p}$ and indeed  $b_\tau=b_{\tau'}$ since we have restricted each entry of $\beta(G)$ in $\{0,1,\ldots,p-1\}$. Now one  finds that the two vectors  $\beta(G)+c_0\alpha(G)$ and $\beta(G)-c_0\alpha(G)$ can be obtained from each other by swapping  the $\tau$-th entry and  $\tau'$-th entry. In other words, $\gamma_1(G)=P\gamma_0(G)$ and (\romannumeral2) is proved.
		
		Let $q_1,q_2,\ldots,q_s$ be all columns of $\hat{Q}$ which are nonzero vectors over $\mathbb{F}_p$. By the same argument as in the proof of (\romannumeral1), there exist integers $k_1,k_2,\ldots,k_s$, each nonzero modulo $p$, such that
		\begin{equation}\label{spm}
	k_i q_i\equiv \pm c_0\alpha(G)+\beta(G)\pmod{p}, i=1,2,\ldots,s,
		\end{equation} where $c_0$ is a solution to (\ref{qr}). Note that  (\romannumeral3) trivially holds if $s=1$ or $c_0\equiv 0\pmod{p}$. Thus, we may assume $s\ge 2$ and  $c_0\not\equiv 0\pmod{p}$. To show (\romannumeral3), it suffices to show that the signs must be consistent throughout all congruences in (\ref{spm}). Suppose to the contrary that there exist two indices, say 1 and 2, such that $k_1 q_1\equiv  +c_0\alpha(G)+\beta(G)\pmod{p}$ but  $k_2 q_2\equiv  -c_0\alpha(G)+\beta(G)\pmod{p}$. As $\hat{Q}^\T\hat{Q}=\ell^2I_n\equiv 0\pmod{p}$, we have $q_1^\T q_1\equiv q_2^\T q_2\equiv q_1^\T q_2\equiv 0\pmod{p}$  and hence $(k_1q_1-k_2q_2)^\T(k_1q_1-k_2q_2)\equiv 0\pmod{p}$. Noting that $k_1q_1-k_2q_2\equiv 2c_0\alpha(G) $ and $\alpha^\T(G)\alpha(G)=2$, we can obtain that $8c_0^2\equiv 0\pmod{p}$, that is, $c_0\equiv 0\pmod{p}$ as $p$ is an odd prime. This contradicts our assumption and hence completes the proof of (\romannumeral3).
		
		As $\pr \hat{Q}=1$ and $q$ is a nonzero column vector in $\hat{Q}$,  we can write $$\hat{Q}\equiv q(m_1,m_2,\ldots,m_n)\pmod{p},$$ for some integers $m_1,m_2,\ldots,m_n$. We use $k$ to denote the column index corresponding to $q$ in $\hat{Q}$. Let $h=(h_1,h_2,\ldots,h_n)^\T$ be the $k$-th column of $A(H)$. Noting $$A(G)\hat{Q}=\hat{Q}A(H)\equiv q(m_1,m_2,\ldots,m_n)A(H) \pmod{p},$$ and taking the $k$-th columns on both sides, we obtain that $A(G)q\equiv \lambda q \pmod{p}$, where $\lambda=m_1h_1+m_2h_2+\cdots+m_nh_n$.
		By (\romannumeral3), either $q\in\span (\gamma_0(G))$ or $q\in\span (\gamma_1(G))$. Without loss of generality, we assume $q\in\span (\gamma_0(G))$. Then we have $
	A(G)\gamma_0(G)\equiv \lambda \gamma_0(G)\pmod{p}.$  Consequently, as  $A(G) P=PA(G)$, we have
	\begin{equation}\label{ap}
	A(G)P\gamma_0(G)=PA(G)\gamma_0(G)\equiv \lambda P\gamma_0(G)\pmod{p}.
	\end{equation}
	By (\romannumeral2), $P\gamma_0(G)=\gamma_1(G)$ and hence  (\ref{ap}) can be written as  $A(G)\gamma_1(G)\equiv \lambda\gamma_1(G)\pmod{p}$. To complete the proof of  (\romannumeral4), it remains to show that $\lambda\equiv \lambda_0\pmod{p}$. First consider the case that $\lambda_1(G)\equiv \lambda_0\pmod{p}$. That is, $\alpha(G)$ and $\beta(G)$ correspond to the same eigenvalue $\lambda_0$. Since $\gamma_0(G)\in \span(\alpha(G),\beta(G))$, we must have $A(G)\gamma_0(G)\equiv \lambda_0 \gamma_0(G)\pmod{p}$. This indicates that $\lambda\equiv \lambda_0\pmod{p}$ for this case. Next consider the case $\lambda_1(G)\not\equiv \lambda_0\pmod{p}$. We claim that $\gamma_0(G)=\beta(G)$, that is, $c_0\equiv 0\pmod{p}$. Indeed, if $c_0\not\equiv 0\pmod{p}$ then $\beta(G)+c_0\alpha(G)$ can never be an eigenvector of $A(G)$ since $\alpha(G)$ and $\beta(G)$ correspond to two distinct eigenvalues $\lambda_1(G)$ and $\lambda_0$ over $\mathbb{F}_p$. This contradicts the fact that $A\gamma_0(G)\equiv \lambda\gamma_0(G)\pmod{p}$. Thus, $\gamma_0(G)=\beta(G)$ and hence $\lambda\equiv\lambda_0(G)\pmod{p}$. This completes the proof of (\romannumeral4).
	
	As $q\in \span(\gamma_0(G))$ and both $q$ and $\gamma_0(G)$ are nonzero vectors over $\mathbb{F}_p$, we can write $\gamma_0\equiv tq \pmod{p}$, i.e., $\gamma_0=tq+p\eta$ for some integer $t$ and integral vector $\eta$. Moreover, as $A(G)\gamma_0(G)\equiv \lambda_0 \gamma_0(G)\pmod{p}$ by (\romannumeral4), we have
	$A(G)q\equiv \lambda_0 q\pmod{p}$, that is,
	\begin{equation}\label{A0q}
	(A(G)-\lambda_0I_n)q\equiv 0 \pmod{p}.
				\end{equation}
	Noting that $\hat{Q}^\T \hat{Q},\hat{Q}^\T A(G)\hat{Q}\equiv 0\pmod{p^2}$ and $q$ is a column of $\hat{Q}$, we have $q^\T q,q^\T A(G) q\equiv 0\pmod{p^2}$ and hence
	\begin{equation}\label{qA0q}
	q^\T (A(G)-\lambda_0 I_n)q\equiv 0\pmod{p^2}.
	\end{equation}
	 Write $A_0=A(G)-\lambda_0 I_n$. Note that $A_0$ is symmetric.	It follows from (\ref{A0q}) and (\ref{qA0q}) that
	\begin{eqnarray*}
	\gamma^\T_0(G) A_0\gamma_0(G)&=&(tq+p\eta)^\T A_0(tq+p\eta)\\
	&=&t^2q^\T A_0q+2tp\cdot \eta^\T A_0 q+p^2\eta^\T A_0\eta^\T\\
	&\equiv &0\pmod{p^2}.
	\end{eqnarray*}
Finally, as $\gamma_1(G)=P\gamma_0(G)$ and $P^\T A_0P=A_0$,  we see that $\gamma^\T_1(G) A_0\gamma_1(G)=\gamma^\T_0(G) A_0\gamma_0(G).$
This proves (\romannumeral5).
\end{proof}
The conclusion of Proposition \ref{mainp} can be much simplified assuming (\ref{addcon}).
\begin{corollary}\label{mainpforsym}
	Under the assumption of Proposition \ref{mainp}, if (\ref{addcon}) holds, then we have
	
	\noindent	\textup{(\romannumeral1)}	 $\beta^\T(G;p)\beta(G;p)\equiv 0\pmod{p}$, and
	
	\noindent	\textup{(\romannumeral2)}	
	$\beta^\T(G;p)(A(G)-\lambda_0I_n)\beta(G;p)\equiv 0\pmod{p^2}$.
\end{corollary}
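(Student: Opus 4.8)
The plan is to show that condition (\ref{addcon}) forces the scalar $c_0$ appearing in Proposition \ref{mainp} to vanish modulo $p$, after which both assertions fall out as immediate specializations of parts (\romannumeral1) and (\romannumeral5) of that proposition. First I would translate the hypothesis: by Proposition \ref{neqeig}, condition (\ref{addcon}) is equivalent to $\lambda_1(G)\not\equiv\lambda_0(G;p)\pmod{p}$. This is precisely the second case analyzed in the proof of Proposition \ref{mainp}(\romannumeral4), where it was shown that $\beta(G)+c_0\alpha(G)$ cannot be an eigenvector of $A(G)$ over $\mathbb{F}_p$ unless $c_0\equiv 0\pmod{p}$, since $\alpha(G)$ and $\beta(G)$ are eigenvectors for the \emph{distinct} eigenvalues $\lambda_1(G)$ and $\lambda_0(G;p)$. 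Hence (\ref{addcon}) yields $c_0\equiv 0\pmod{p}$, so that $\gamma_0(G)=\gamma_1(G)=\beta(G)$ (choosing the representative $c_0=0$).

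For (\romannumeral1), I would feed $c_0\equiv 0$ into the quadratic congruence (\ref{qr}). Proposition \ref{mainp}(\romannumeral1) gives $c_0^2\equiv-\frac{p+1}{2}\beta^\T(G)\beta(G)\pmod{p}$; with $c_0\equiv 0$ the left side vanishes, so $-\frac{p+1}{2}\beta^\T(G)\beta(G)\equiv 0\pmod{p}$. Since $2\cdot\frac{p+1}{2}=p+1\equiv 1\pmod{p}$, the factor $\frac{p+1}{2}$ is a unit modulo the odd prime $p$, and cancelling it yields $\beta^\T(G)\beta(G)\equiv 0\pmod{p}$, which is exactly (\romannumeral1).

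For (\romannumeral2), I would specialize Proposition \ref{mainp}(\romannumeral5), namely $\gamma_0^\T(G)(A(G)-\lambda_0 I_n)\gamma_0(G)\equiv 0\pmod{p^2}$, to the identity $\gamma_0(G)=\beta(G)$ just obtained, giving $\beta^\T(G)(A(G)-\lambda_0 I_n)\beta(G)\equiv 0\pmod{p^2}$ directly. If instead one keeps $c_0$ as an arbitrary integer with $c_0\equiv 0\pmod{p}$, the same conclusion follows after expanding $\gamma_0(G)=\beta(G)+c_0\alpha(G)$: the cross term is killed because $\alpha^\T(G)(A(G)-\lambda_0 I_n)\beta(G)=\lambda_1(G)\alpha^\T(G)\beta(G)-\lambda_0\alpha^\T(G)\beta(G)\equiv 0\pmod{p}$ by Propositions \ref{eigQ} and \ref{abindep}, while the $c_0^2$ term is $\equiv 0\pmod{p^2}$.

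I do not expect a substantial obstacle here; the corollary is essentially the observation that assumption (\ref{addcon}) collapses the two distinct spanning vectors $\gamma_0(G)$ and $\gamma_1(G)$ of Proposition \ref{mainp} onto the single vector $\beta(G)$. The only points requiring care are the appeal to the interior of the proof of Proposition \ref{mainp}(\romannumeral4) to extract $c_0\equiv 0$, and the unit status of $\frac{p+1}{2}$ modulo $p$ used to cancel it in part (\romannumeral1).
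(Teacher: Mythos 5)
Your proposal is correct and follows essentially the same route as the paper: invoke Proposition \ref{neqeig} to turn (\ref{addcon}) into $\lambda_1(G)\not\equiv\lambda_0(G;p)\pmod{p}$, extract $c_0\equiv 0\pmod{p}$ (hence $\gamma_0(G)=\beta(G;p)$) from the case analysis inside the proof of Proposition \ref{mainp}(\romannumeral4), read off (\romannumeral1) from the congruence (\ref{qr}), and read off (\romannumeral2) from Proposition \ref{mainp}(\romannumeral5). The extra details you supply (the unit status of $\tfrac{p+1}{2}$ and the expansion handling a nonzero representative of $c_0$) are harmless refinements of the same argument.
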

\begin{proof}
As (\ref{addcon}) holds, we have $\lambda_1(G)\not\equiv \lambda_0(G;p)\pmod{p}$ by Proposition \ref{neqeig}. From the proof of Proposition \ref{mainp}(\romannumeral4) we know that $\gamma_0(G)=\beta(G)$ or equivalently, $c_0\equiv 0\pmod{p}$. Since $c_0$ is a solution of (\ref{qr}), we find that $\beta^\T(G)\beta(G)\equiv 0\pmod{p}$.  Thus (\romannumeral1)  holds.  The remaining assertion  follows from Proposition \ref{mainp}(\romannumeral5) since $\gamma_0(G)=\beta(G;p)$.
\end{proof}
\begin{lemma}\label{partwy}
	If $(A(G)-\lambda_0 I)y\equiv p^j(l \alpha(G)+m\beta(G;p))\pmod{p^{j+1}}$ for some $l,m\in \mathbb{Z}$ and $j\in \{0\}\cup \mathbb{Z}^+$, then
	\begin{equation}
	\hat{W}^\T(G)y\equiv e^\T y(1,\lambda_0,\ldots,\lambda_0^{n-2},0)^\T+\alpha^\T(G)y(0,0,\ldots,0,1)^\T\pmod{p^{j+1}}.
	\end{equation}
\end{lemma}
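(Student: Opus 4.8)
The plan is to unpack $\hat W^\T(G)y$ coordinatewise and reduce the whole statement to a single scalar congruence. Writing $A=A(G)$, $\alpha=\alpha(G)$, $\beta=\beta(G;p)$, recall from Definition \ref{defhW} that $\hat W(G)=[e,Ae,\ldots,A^{n-2}e,\alpha]$, so the $i$-th coordinate of $\hat W^\T(G)y$ is $e^\T A^{i-1}y$ for $1\le i\le n-1$ and is $\alpha^\T(G)y$ for $i=n$. On the asserted right-hand side, the last coordinate is exactly $\alpha^\T(G)y$ (the vector $(1,\lambda_0,\ldots,\lambda_0^{n-2},0)^\T$ contributes $0$ there), so that coordinate matches identically; the $i$-th coordinate for $1\le i\le n-1$ is $\lambda_0^{i-1}\,e^\T y$. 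Hence the lemma is equivalent to the family of scalar congruences
$$e^\T A^k y\equiv \lambda_0^k\,e^\T y\pmod{p^{j+1}},\qquad 0\le k\le n-2.$$

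To prove these I would lift the hypothesis to the integral identity $Ay=\lambda_0 y+p^j(l\alpha+m\beta)+p^{j+1}w$ for some integral vector $w$, and then induct on $k$ with $c_k:=e^\T A^k y$. The base case $c_0=e^\T y$ is immediate. Multiplying the identity on the left by $e^\T A^{k-1}$ gives
$$c_k=\lambda_0 c_{k-1}+p^j l\,e^\T A^{k-1}\alpha+p^j m\,e^\T A^{k-1}\beta+p^{j+1}\,e^\T A^{k-1}w.$$
Two facts proved earlier annihilate the correction terms. First, Proposition \ref{eigQ} gives $W^\T(G)\alpha=0$ \emph{exactly}, i.e. $e^\T A^i\alpha=0$ for $0\le i\le n-1$, so the $\alpha$ term vanishes identically. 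Second, Proposition \ref{abindep} gives $W^\T(G)\beta\equiv 0\pmod p$, i.e. $e^\T A^{i}\beta\equiv 0\pmod p$ for $0\le i\le n-1$; this supplies an extra factor of $p$ and upgrades $p^j m\,e^\T A^{k-1}\beta$ to a multiple of $p^{j+1}$. Therefore $c_k\equiv\lambda_0 c_{k-1}\pmod{p^{j+1}}$, and the induction closes to yield $c_k\equiv\lambda_0^k e^\T y$ throughout the required range.

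The only place demanding care is the index bookkeeping: the step for $c_k$ uses $e^\T A^{k-1}\beta\equiv 0\pmod p$, which is guaranteed by $W^\T(G)\beta\equiv 0\pmod p$ only for $k-1\le n-1$. Since the lemma asks for $k$ up to $n-2$, we invoke this with $k-1\le n-3$, comfortably within range, so no reduction of higher powers $A^{n-1}e$ via Cayley--Hamilton is needed. I do not expect a genuine obstacle here: the entire content of the proof is the asymmetric behaviour of the two pieces of $(A-\lambda_0 I)y$ under the functionals $e^\T A^{k-1}$ — the $\alpha$-component is killed \emph{exactly}, the $\beta$-component only \emph{modulo} $p$ — and it is precisely the borderline extra factor of $p$ on the $\beta$-component that turns the error from order $p^j$ into order $p^{j+1}$.
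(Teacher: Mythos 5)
Your proof is correct and follows essentially the same route as the paper's: reduce to the scalar congruences $e^\T A^k y\equiv\lambda_0^k e^\T y\pmod{p^{j+1}}$ for $0\le k\le n-2$ and induct on $k$, using $e^\T A^i\alpha\equiv 0$ and $e^\T A^i\beta\equiv 0\pmod p$ to absorb the correction terms into $p^{j+1}$. The only cosmetic difference is that you invoke Proposition \ref{eigQ} for the exact vanishing of $W^\T(G)\alpha(G)$, whereas the paper uses Proposition \ref{abindep} for both $\alpha$ and $\beta$ modulo $p$, which suffices.
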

\begin{proof}
	Note that the last row of $	\hat{W}^\T(G)y$ is $\alpha^\T y$. It suffices to show that $e^\T A^ky\equiv \lambda_0^ke^\T y\pmod{p^{j+1}}$ for $k=0,1,\ldots,n-2$. Clearly  $e^\T A^ky\equiv \lambda_0^ke^\T y\pmod{p^{j+1}}$ holds for $k=0$. Now assume that $e^\T A^ky\equiv \lambda_0^ke^\T y\pmod{p^{j+1}}$ holds for $k<n-2$ and we are going to check it for $k+1$.  By Proposition \ref{abindep}, we have $W^\T\alpha,W^\T\beta\equiv 0\pmod{p}$ and hence	$e^\T A^k\alpha,e^\T A^k\beta\equiv 0\pmod{p}$. Now, by the condition of this lemma and induction hypothesis, we have
	\begin{equation}
	e^\T A^{k+1}y\equiv e^\T A^k(\lambda_0 y+p^j l\alpha+p^j m\beta)\equiv \lambda_0^{k+1}e^\T y\pmod{p^{j+1}}.
	\end{equation}
	This proves the lemma.
\end{proof}
\begin{lemma}\label{prn1}
	Under the assumption of Proposition \ref{mainp}, if (\ref{addcon}) holds, then we have
	
	\noindent	\textup{(\romannumeral1)}
	$\pr[A(G)-\lambda_0 I,\alpha(G),\beta(G;p)]=n-1$, and
	
	\noindent	\textup{(\romannumeral2)}
	$\dim L= 3$, where $L=\{y\in \mathbb{F}_p^n\colon\,(A(G)-\lambda_0I)y\in \span(\alpha(G),\beta(G;p))\}$.
\end{lemma}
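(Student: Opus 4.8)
The plan is to prove parts (i) and (ii) simultaneously by tying the two quantities together through a single dimension count and then pinning down their common value using Lemma \ref{partwy}. Throughout I work over $\mathbb{F}_p$ and abbreviate $A_0=A(G)-\lambda_0 I$ (a symmetric matrix), $S=\span(\alpha(G),\beta(G;p))$, and $K=\ker A_0$ (the $\lambda_0$-eigenspace of $A(G)$ over $\mathbb{F}_p$). Note that $L=\{y:A_0y\in S\}=A_0^{-1}(S)$.

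First I would record the elementary identity linking the two parts. Passing to the quotient map $\pi\colon\mathbb{F}_p^n\to\mathbb{F}_p^n/S$, whose target has dimension $n-2$ since $\alpha(G),\beta(G;p)$ are independent by Proposition \ref{abindep}, one has $L=\ker(\pi\circ A_0)$, and comparing ranks gives
\[
\dim L=n+2-\pr[A_0,\alpha(G),\beta(G;p)].
\]
Hence it suffices to prove $\dim L=3$, after which part (i) follows at once.

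Next I would establish the ceiling $\dim L\le 3$, which is exactly where Lemma \ref{partwy} enters. For $y\in L$ we have $A_0y\equiv l\alpha(G)+m\beta(G;p)\pmod p$, the $j=0$ case of that lemma, so $\hat W^\T(G)y$ is an $\mathbb{F}_p$-combination of the two fixed vectors $(1,\lambda_0,\dots,\lambda_0^{n-2},0)^\T$ and $(0,\dots,0,1)^\T$. Thus the restriction of $\hat W^\T(G)$ to $L$ has image of dimension at most $2$, while its kernel is $L\cap N(\hat W^\T(G))=\span(\beta(G;p))$: indeed $\pr\hat W(G)=n-1$ by Proposition \ref{Walpha} forces $N(\hat W^\T(G))=\span(\beta(G;p))$, and $\beta(G;p)\in L$. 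Rank–nullity then yields $\dim L\le 1+2=3$.

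The remaining and most delicate step is to rule out $\dim L=2$, and here condition (\ref{addcon}) is indispensable. Applying rank–nullity to $A_0|_L$ gives $\dim L=\dim K+\dim(\im A_0\cap S)$. Since $\beta(G;p)\in K$, and since (\ref{addcon}) yields $\lambda_1(G)\not\equiv\lambda_0\pmod p$ by Proposition \ref{neqeig}, the eigenvalue computation together with the symmetry of $A_0$ shows $\alpha(G)\notin K$ but $\alpha(G)\in\im A_0=K^{\perp}$; hence both summands are at least $1$ and $\dim L\ge 2$. If $\dim L=2$, then $\dim K=1$ and $\im A_0\cap S=\span(\alpha(G))$, the latter forcing $\beta(G;p)\notin\im A_0$. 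But $\dim K=1$ gives $K=\span(\beta(G;p))$, so $\im A_0=K^{\perp}=\{x:\beta^\T(G;p)x\equiv 0\}$, and because $\beta^\T(G;p)\beta(G;p)\equiv 0\pmod p$ by Corollary \ref{mainpforsym}(i) this places $\beta(G;p)\in\im A_0$, a contradiction. Therefore $\dim L=3$, and the displayed identity gives $\pr[A_0,\alpha(G),\beta(G;p)]=n-1$. The crux of the whole argument is precisely this exclusion: it is the one point where (\ref{addcon}), channeled through the self-orthogonality $\beta^\T\beta\equiv 0$, cannot be dispensed with.
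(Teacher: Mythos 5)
Your proof is correct, but it takes a genuinely different route from the paper's. The one ingredient the two arguments share is the ceiling $\dim L\le 3$, which both extract from Lemma \ref{partwy} (case $j=0$) together with $\pr \hat{W}(G)=n-1$; after that they diverge. The paper first establishes $\pr[A-\lambda_0I,\alpha,\beta]\le n-1$ by checking $\beta^\T[A-\lambda_0I,\alpha,\beta]\equiv 0\pmod p$, and then runs a case analysis on $\pr(A-\lambda_0I)$: it excludes the value $n-3$ using (\ref{addcon}), and in each of the surviving cases $n-1$ and $n-2$ it exhibits an explicit third vector of $L$ outside $\span(\alpha,\beta)$ and verifies (i) separately. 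You instead observe at the outset that (i) and (ii) are equivalent via the identity $\dim L=n+2-\pr[A-\lambda_0 I,\alpha,\beta]$ (a correct rank--nullity count for the composite map into $\mathbb{F}_p^n/\span(\alpha,\beta)$), so only one statement needs proving, and you then pin down $\dim L=3$ through the decomposition $\dim L=\dim\ker A_0+\dim(\im A_0\cap\span(\alpha,\beta))$ combined with $\im A_0=(\ker A_0)^\perp$ for the symmetric matrix $A_0$ over $\mathbb{F}_p$: the hypothetical case $\dim L=2$ forces $\ker A_0=\span(\beta)$ and $\beta\notin\im A_0$ simultaneously, contradicting $\beta^\T\beta\equiv 0\pmod p$ from Corollary \ref{mainpforsym}(i). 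Your version dispenses with the case analysis entirely and makes transparent exactly where the two hypotheses enter --- (\ref{addcon}) via $\lambda_1\not\equiv\lambda_0$ places $\alpha$ in $\im A_0\setminus\ker A_0$, and the self-orthogonality of $\beta$ delivers the final contradiction --- whereas the paper's longer case analysis additionally records whether $\beta$ lies in the column space of $A-\lambda_0 I$ in each case, information that is not used elsewhere. Both are complete proofs of the lemma.
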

\begin{proof}
	For simplicity, we shall write $A=A(G)$, $\hat{W}=\hat{W}(G)$, $\alpha=\alpha(G)$ and $\beta=\beta(G;p)$. By Proposition \ref{abindep}, Proposition \ref{eigbeta} and Corollary  \ref{mainpforsym}(\romannumeral1), we have $\beta^\T[A-\lambda_0I,\alpha,\beta]\equiv 0\pmod{p}$. It follows from the fact $\beta\not\equiv 0\pmod{p}$ that $\pr [A-\lambda_0 I,\alpha,\beta]\le n-1$. Thus, to show (\romannumeral1) it suffices to prove the inverse inequality.
	
As $A\alpha=\lambda_1 \alpha$ we have  $(A-\lambda_0 I)\alpha=(\lambda_1-\lambda_0)\alpha\in \span(\alpha,\beta)$ and hence  $\alpha\in L$ by the definition of $L$. Also, $\beta\in N(A-\lambda_0 I)\subset L$ by Proposition \ref{eigbeta}. Thus, $L\supset \span(\alpha,\beta)$ and hence $\dim L\ge 2$.
	
	Let $K=\{y\in L\colon\,e^\T y=0,\alpha^\T y=0~ \textup{over}~ \mathbb{F}_p\}$. Then $\dim K\ge\dim L-2$. Using Lemma \ref{partwy} for the special case $j=0$, we find that $K\subset N(\hat{W}^\T).$ By Proposition \ref{Walpha}, $\pr \hat{W}=n-1$ and hence $\dim N(\hat{W}^\T)=1$. Thus, $\dim K\le 1$ and we must have $\dim L\le 3$. In particular, since  $N(A-\lambda_0 I)\subset L$ by the definition of $L$, it necessarily holds that $\dim N(A-\lambda_0 I)\le 3$, i.e., $\pr (A-\lambda_0 I)\ge n-3$.
	
	 We claim that  $\pr (A-\lambda_0 I)\neq n-3$. Suppose to the contrary that  $\pr (A-\lambda_0 I)=n-3$. Then, 	as $\dim N(A-\lambda_0 I)=3$, $N(A-\lambda_0 I)\subset L$ and $\dim L\le 3$, we must have $L= N(A-\lambda_0 I)$ and $\dim L=3$.
	 Noting that $\alpha\in L$, we see that $\alpha\in N(A-\lambda_0 I)$. Thus, $N(A-\lambda_0 I)\supset \span(\alpha,\beta)$, that is, $N(A-\lambda_0 I)\supset N(W^\T) $. This contradicts (\ref{addcon}) and hence proves the claim.  Thus, it suffices to consider the following two cases.
	
	\noindent\textbf{Case 1}:  $ \pr (A-\lambda_0 I)= n-1$.
	
	Clearly $\pr[A-\lambda_0 I,\alpha,\beta]\ge n-1$, so (\romannumeral1) holds. Thus,  $ \pr [A-\lambda_0 I,\beta]= \pr (A-\lambda_0 I)=n-1$ and hence  $\beta=(A-\lambda_0 I)y$ over $\mathbb{F}_p$ for some vector $y$. We claim that $y\not\in \span(\alpha,\beta)$ (over $\mathbb{F}_p$). Suppose to the contrary that $y\equiv k_1 \alpha+k_2\beta\pmod{p}$. Then, left-multiplying $A-\lambda_0 I$ to both sides, we obtain $\beta\equiv k_1(\lambda_1-\lambda_0)\alpha\pmod{p}$. This is a contradiction as $\beta$ and $\alpha$ are linearly independent over $\mathbb{F}_p$. Therefore, $y\not\in \span(\alpha,\beta)$ and hence $\pr[\alpha,\beta,y]=3$. Note that $y\in L$ by the definition of $L$. Thus, $\dim L\ge \pr[\alpha,\beta,y]=3$. This proves (\romannumeral2) as we always have  $\dim L\le 3$.

	\noindent\textbf{Case 2}: $ \pr (A-\lambda_0 I)= n-2$.
	
	As  $ \pr (A-\lambda_0 I)= n-2$ and $\beta\in N(A-\lambda_0 I)$, we can find another vector $\beta'$ such that $\beta,\beta'$ constitute a basis of $N(A-\lambda_0 I)$. By (\ref{addcon}),  we see that $\alpha\not\in N(A-\lambda_0 I)$, that is, $\alpha\not\in\span(\beta,\beta')$. Since   $\alpha,\beta,\beta'\in L$ and $\dim L\le 3$, we must have  $\dim L=3$ and $L=\span(\alpha,\beta,\beta')$. This proves (\romannumeral2).
	
	We claim that $\beta$ does not belong to the column space of $A-\lambda_0 I$. Suppose to the contrary that $\beta=(A-\lambda_0 I)y$ over $\mathbb{F}_p$ for some vector $y$. Then $y\in L$ but $y\not\in N(A-\lambda_0 I)$. As $\dim L=3$, we also have  $L=\span (\beta,\beta',y)$. Since $\alpha,\beta,\beta'$ is a basis of $L$, we can write $y\equiv k_1\alpha+k_2\beta+k_3\beta'\pmod{p}$ for some integers $k_1,k_2,k_3$. Left-multiplying $A-\lambda_0 I$ to both sides, we have
	$\beta\equiv k_1(\lambda_1-\lambda_0)\alpha\pmod{p}$, contradicting the fact that $\alpha$ and $\beta$ are linearly independent. This shows the claim and hence  $ \pr [A-\lambda_0 I,\beta]=n-1$. Thus,  $ \pr [A-\lambda_0 I,\alpha,\beta]\ge n-1$ and (\romannumeral1) follows.

	Combining Case 1 and Case 2, the lemma follows.
\end{proof}
\begin{lemma}\label{wxp2}
	Under the assumption of Proposition \ref{mainp}, if (\ref{addcon}) holds, then there exists a vector $x\not\equiv 0\pmod{p}$ such that $\hat{W}^\T(G) x\equiv 0\pmod{p^2}$.
\end{lemma}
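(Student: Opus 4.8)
The plan is to produce $x$ explicitly as a $p$-adic correction $x=\beta(G;p)+py$ of the null vector $\beta(G;p)$, and then read off $\hat{W}^\T(G)x\equiv 0\pmod{p^2}$ directly from Lemma \ref{partwy}. Throughout I write $A_0=A(G)-\lambda_0 I$, $\alpha=\alpha(G)$, $\beta=\beta(G;p)$. By Proposition \ref{eigbeta} we have $A_0\beta\equiv 0\pmod p$, so I set $A_0\beta=p\sigma$ for an integral vector $\sigma$; similarly $e^\T\beta\equiv 0\pmod p$ by Proposition \ref{abindep}, so I set $e^\T\beta=pc_e$. The single arithmetic fact that drives everything is that, under (\ref{addcon}), Corollary \ref{mainpforsym}(ii) gives $\beta^\T A_0\beta\equiv 0\pmod{p^2}$, i.e.\ $\sigma^\T\beta=\tfrac1p\beta^\T A_0\beta\equiv 0\pmod p$.

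First I would solve an auxiliary congruence over $\mathbb{F}_p$. Since $A_0$ is symmetric, $(\im A_0+\span(\alpha,\beta))^{\perp}=N(A_0)\cap\alpha^{\perp}\cap\beta^{\perp}$, and by Lemma \ref{prn1}(i) this space is one-dimensional; as $\beta^\T A_0=0$, $\alpha^\T\beta\equiv 0$ and $\beta^\T\beta\equiv 0$ (Corollary \ref{mainpforsym}(i)), it is exactly $\span(\beta)$. Hence $-\sigma\in\im A_0+\span(\alpha,\beta)$ \emph{precisely because} $\sigma^\T\beta\equiv 0\pmod p$, so there is a vector $y_0$ with $\sigma+A_0y_0\in\span(\alpha,\beta)$ over $\mathbb{F}_p$. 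The full solution set of this condition is the coset $y_0+L$, where $L=\{y\colon A_0y\in\span(\alpha,\beta)\}$ has $\dim L=3$ by Lemma \ref{prn1}(ii).

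Next I would use the two remaining degrees of freedom in $y_0+L$ to force the two ``boundary'' functionals, namely to find $y\in y_0+L$ with $e^\T y\equiv -c_e$ and $\alpha^\T y\equiv 0\pmod p$. This is possible once $(e^\T,\alpha^\T)\colon L\to\mathbb{F}_p^2$ is shown to be surjective. Both functionals are nonzero on $L$: indeed $\alpha\in L$ with $\alpha^\T\alpha=2\not\equiv 0$, while if $e^\T|_L\equiv 0$ then $\{y\in L\colon\alpha^\T y\equiv 0\}$ has dimension at least $2$ and, by Lemma \ref{partwy} with $j=0$, lies inside $N(\hat{W}^\T(G))$, contradicting $\dim N(\hat{W}^\T(G))=1$ from Proposition \ref{Walpha}. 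If the two functionals were proportional, evaluating the proportionality at $\alpha$ (where $e^\T\alpha=0$ but $\alpha^\T\alpha=2$) would give $2\equiv 0\pmod p$, impossible for odd $p$; hence they are independent and the map is onto, so the desired $y$ exists.

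Finally I would set $x=\beta+py$, which satisfies $x\equiv\beta\not\equiv 0\pmod p$. Then $A_0x=p(\sigma+A_0y)\equiv p(l\alpha+m\beta)\pmod{p^2}$ for suitable $l,m$ (by the choice of $y_0$ and $y\in y_0+L$), so Lemma \ref{partwy} with $j=1$ yields $\hat{W}^\T(G)x\equiv e^\T x\,(1,\lambda_0,\dots,\lambda_0^{n-2},0)^\T+\alpha^\T x\,(0,\dots,0,1)^\T\pmod{p^2}$. By construction $e^\T x=p(c_e+e^\T y)\equiv 0\pmod{p^2}$ and, using $\alpha^\T\beta=0$, $\alpha^\T x=p\,\alpha^\T y\equiv 0\pmod{p^2}$, whence $\hat{W}^\T(G)x\equiv 0\pmod{p^2}$, as required. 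The genuinely delicate point is the coordination in the middle two paragraphs: the auxiliary congruence for $y_0$ is solvable \emph{only} because of $\sigma^\T\beta\equiv 0$ (the exact content of Corollary \ref{mainpforsym}(ii)), and there are \emph{just enough} free dimensions to also impose the two conditions on $e^\T y$ and $\alpha^\T y$ because $\dim L=3$ by Lemma \ref{prn1}(ii); it is this tight dimension bookkeeping, rather than any single computation, where the hypothesis (\ref{addcon}) is used to its full strength.
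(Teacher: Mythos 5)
Your proof is correct and is essentially the paper's own argument: both rest on Corollary \ref{mainpforsym}(ii) and Lemma \ref{prn1}(i) to solve $(A(G)-\lambda_0 I)(\beta+py)\equiv p(l\alpha+m\beta)\pmod{p^2}$, and on $\dim L=3$ together with Lemma \ref{partwy} to annihilate the residual terms $e^\T x$ and $\alpha^\T x$ modulo $p^2$. The only difference is organizational: the paper applies a second correction $p\eta$ with $\eta\in L$ chosen via surjectivity of $z\mapsto \hat{W}^\T(G)z$ from $L$ onto the two-dimensional space $S$, whereas you fold this correction into the choice of $y$ within the coset $y_0+L$ via surjectivity of $(e^\T,\alpha^\T)$ on $L$ --- the same statement read through Lemma \ref{partwy} with $j=0$.
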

\begin{proof}
	We also use the simplified notation as in the proof of  Lemma \ref{prn1}.
	By Corollary \ref{mainpforsym} (ii), $\beta^\T(A-\lambda_0I)\beta\equiv 0\pmod{p^2}$. That is,
	$\beta^\T\frac{(A-\lambda_0I)\beta}{p}\equiv 0\pmod{p}$. Note that $\beta^\T[A-\lambda_0I,\alpha,\beta]\equiv 0\pmod{p}$. By Lemma \ref{prn1}(\romannumeral1), $\pr [A-\lambda_0I,\alpha,\beta]=n-1$ and consequently each solution to $\beta^\T x\equiv 0\pmod{p}$ can be written as a linear combination of columns in $[A-\lambda_0I,\alpha,\beta]$. In particular, we can write
	\begin{equation}\label{mp}
	\frac{(A-\lambda_0I)\beta}{p}\equiv (A-\lambda_0 I)y+l\alpha+m\beta\pmod{p},
	\end{equation}
	for some vector $y$ and scalars $l,m$. Multiplying both sides by $p$ and moving  terms, we can rewrite (\ref{mp}) as
	\begin{equation}\label{mp2}
	(A-\lambda_0I)(\beta-py)\equiv p(l\alpha+m\beta)\pmod{p^2}.
	\end{equation}
	It follows from Lemma \ref{partwy} that
	\begin{equation}\label{mp3}
	\hat{W}^\T(\beta-py)\equiv  e^\T (\beta-py)(1,\lambda_0,\ldots,\lambda_0^{n-2},0)^\T+\alpha^\T (\beta-py)(0,0,\ldots,0,1)^\T\pmod{p^2}.
	\end{equation}
	Let $L=\{z\in \mathbb{F}_p^n\colon\,(A-\lambda_0I)z\in \span(\alpha,\beta)\}$ and $S$ be the two-dimensional subspace of $\mathbb{F}_p^n$ spanned by $(1,\lambda_0,\ldots,\lambda_0^{n-2},0)^\T$ and $(0,0,\ldots,0,1)$. By  Lemma \ref{partwy}, for each $z\in L$, we have $\hat{W} z\in S$. Consider a linear map $\sigma\colon\, L\mapsto S$ defined by $\sigma(z)=\hat{W}z$ for $v\in L$. By Proposition \ref{Walpha}, $\pr \hat{W}=n-1$ and hence  $\ker \sigma$ is at most one dimensional. In fact,  $\dim \ker \sigma=1$ since $\ker \sigma$ contains $\beta$, which is nonzero. By Lemma \ref{prn1}, $\dim L=3$. Therefore, $\dim\sigma(L)=\dim L-\dim \ker \sigma= 2$  and hence  $\sigma(L)=S$.
	
	Note that $e^\T \beta,\alpha^\T \beta\equiv 0\pmod{p}$. We  have $e^\T(\beta-py),\alpha^\T(\beta-py)\equiv 0\pmod{p}$. Write $c_1=\frac{e^\T(\beta-py)}{p}$ and $c_2=\frac{\alpha^\T(\beta-py)}{p}$. Then $c_1,c_2$ are integers. Letting $$\mu=c_1(1,\lambda_0,\ldots,\lambda_0^{n-2},0)^\T+c_2(0,0,\ldots,0,1)^\T,$$ we can simplify (\ref{mp3}) as
	\begin{equation}\label{mp4}
	\hat{W}^\T(\beta-py)\equiv   p \mu\pmod{p^2}.
	\end{equation}
	As $\sigma(L)=S$ and $\mu\in S$, we can write $\mu\equiv \hat{W}^\T \eta\pmod{p}$ for some vector $\eta$. Multiplying $p$ to both sides and combining with (\ref{mp4}) leads to
	$
	\hat{W}^\T(\beta-py)\equiv  \hat{W}^\T p\eta\pmod{p^2},
	$
	that is,
	\begin{equation}\label{mp5}
	\hat{W}^\T(\beta-py -p\eta)\equiv 0\pmod{p^2}.
	\end{equation}
	Let $x=\beta-py-p\eta$. Then $x\equiv \beta\pmod{p}$ and hence $x\not\equiv 0\pmod{p}$. This proves the lemma.	
\end{proof}

\begin{proof}[Proof of Theorem \ref{mainodd}]
 Suppose to the contrary that $p\mid\ell$. By Lemma \ref{wxp2}, $\hat{W}^\T x\equiv 0\pmod{p^2}$ has a solution $x\not\equiv 0\pmod{p}$. It follows from Fact \ref{basicsnf} (\romannumeral4) and (\romannumeral1) that $p^2\mid \det \hat{W}$. By Proposition \ref{sameodd}, we have $\det \hat{W}=\pm2^{\lfloor\frac{n}{2}\rfloor} b$ and consequently $p^2\mid b$ as $p$ is odd. This means that $b$ is not square-free, a contradiction. This completes the proof of Theorem \ref{mainodd}.
\end{proof}
\subsection{The case $p=2$}
The main idea comes from \cite{wang2017JCTB}. Set $k=\lfloor\frac{n}{2}\rfloor$ and $A=A(G)$. Following \cite{wang2017JCTB}, let $\tilde{W}$ and $\tilde{W}_1$ be the matrix defined as follows:
\begin{equation}
\tilde{W}= \begin{cases}
(e,Ae,\ldots,A^{k-1}e) &\mbox{if $n$ is even,}\\
(Ae,A^2e,\ldots,A^{k}e) &\mbox{if $n$ is odd,}
\end{cases}
\end{equation}
and
\begin{equation}
\tilde{W}_1= \begin{cases}
(e,A^2e,\ldots,A^{2k-2}e) &\mbox{if $n$ is even,}\\
(A^2e,A^4e\ldots,A^{2k}e) &\mbox{if $n$ is odd.}
\end{cases}
\end{equation}
\begin{proposition}\cite[Lemma 4.1]{wang2017JCTB}\label{mainfrom2}
	Let $G\in\mathcal{F}_n$  and $Q\in\mathcal{Q}_G$ with level $\ell$. If
	\begin{equation}\label{subset}
	\left\{v\colon\,\frac{W^\T(G)\tilde{W}_1}{2}v\equiv 0\pmod{2}\right\}\subset \left\{v\colon\,\tilde{W}v\equiv 0\pmod{2}\right\},
	\end{equation}
	then $\ell$ is odd.
\end{proposition}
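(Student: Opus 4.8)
The plan is to prove the contrapositive: assuming $2\mid\ell$, I will exhibit a vector $v$ lying in the left-hand set of (\ref{subset}) but not in the right-hand one, which contradicts the stated inclusion. The whole argument runs parallel to the $p=2$ analysis of \cite{wang2017JCTB}, and the first thing I would verify is that none of that analysis actually uses the invertibility of $W(G)$: it relies only on the identity $Q^\T A(G)Q=A(H)$ for $Q\in\mathcal{Q}_G(H)$, on the integrality of $\tfrac12 W^\T(G)\tilde{W}_1$ (which holds because each entry of $W^\T(G)\tilde{W}_1$ has the form $e^\T A^{i+2j}e$ and is even by Lemma \ref{even}), and on the value $\br W(G)=\lceil\frac{n}{2}\rceil$. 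This last fact is read directly off the Smith normal form defining $\mathcal{F}_n$, which carries exactly $\lceil\frac{n}{2}\rceil$ odd invariant factors; thus the $2$-adic structure of $W(G)$ for $G\in\mathcal{F}_n$ coincides with the controllable case, the only difference (the trailing zero invariant factor) being confined to the odd-prime part treated in Section \ref{oddp} and leaving $\det W_0(G)=2^{\lfloor\frac{n}{2}\rfloor}b^2$ (Proposition \ref{sameodd}) intact.

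With the setup in place, I would put $\hat{Q}=\ell Q$, an integral matrix with $\hat{Q}^\T\hat{Q}=\ell^2 I_n$. Since $Q^\T A(G)Q=A(H)$ gives $A(G)\hat{Q}=\hat{Q}A(H)$, we also have $\hat{Q}^\T A^i(G)\hat{Q}=\ell^2 A^i(H)$ for every $i$. Because $2\mid\ell$, both $\hat{Q}^\T\hat{Q}$ and $\hat{Q}^\T A^i(G)\hat{Q}$ vanish modulo $4$, whereas $W^\T(G)\hat{Q}=\ell W^\T(H)$ vanishes only modulo $2$. By the minimality of the level there is a column $q$ of $\hat{Q}$ with $q\not\equiv 0\pmod 2$; it then satisfies the \emph{linear} relations $e^\T A^i(G)q\equiv 0\pmod 2$ together with the sharper \emph{quadratic} relations $q^\T A^i(G)q\equiv 0\pmod 4$ for all $i$. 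This asymmetry between the mod-$2$ linear data and the mod-$4$ quadratic data is precisely what the construction exploits.

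The heart of the matter is to convert the data carried by $q$ into a coordinate vector $v$ over $\mathbb{F}_2$, following the factorization of the walk columns used in \cite{wang2017JCTB}, in which $\tilde{W}$ records the first-order (mod $2$) position and $\tfrac12 W^\T(G)\tilde{W}_1$ records the second-order (mod $4$) position. The aim is to produce $v$ for which the quadratic relations $q^\T A^i(G)q\equiv 0\pmod 4$, propagated through the even-power structure of $\tilde{W}_1$ and Lemma \ref{even}, force $\tfrac12 W^\T(G)\tilde{W}_1\,v\equiv 0\pmod 2$, placing $v$ in the left-hand set of (\ref{subset}); while $q\not\equiv 0\pmod 2$ forces $\tilde{W}v\not\equiv 0\pmod 2$, keeping $v$ out of the right-hand set. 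The resulting contradiction with (\ref{subset}) shows that $2\nmid\ell$, i.e. $\ell$ is odd.

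I expect the passage from the mod-$2$ to the mod-$4$ level to be the main obstacle. One must track exactly how the self-orthogonality $q^\T A^i(G)q\equiv 0\pmod 4$ descends to the vanishing of the second-order datum $\tfrac12 W^\T(G)\tilde{W}_1 v$, using the degeneracy of the Gram form $\langle A^i e,A^j e\rangle=e^\T A^{i+j}e$, which is $\equiv 0\pmod 2$ for every $i+j\ge 1$ by Lemma \ref{even}. This is the genuinely $2$-adic core inherited from \cite{wang2017JCTB}; the only new point is to confirm that the bookkeeping is insensitive to the singularity of $W(G)$, which it is, since $\tilde{W}$ and $\tilde{W}_1$ involve only the first $\lfloor\frac{n}{2}\rfloor$ walk columns and the $2$-rank $\br W(G)=\lceil\frac{n}{2}\rceil$ is unchanged.
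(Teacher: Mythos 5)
Your proposal is correct and follows essentially the same route as the paper, which does not reprove this statement but cites Lemma 4.1 of \cite{wang2017JCTB} and remarks that the original argument transfers because the rationality of $Q$ (hence the integrality of $\ell Q$) is now supplied by Theorem \ref{main} rather than by controllability, and because $\br W(G)=\lceil\frac{n}{2}\rceil$ still holds for $G\in\mathcal{F}_n$ as read off the Smith normal form. These are precisely the two transfer points you identify, and your sketch of the internal mod-$2$/mod-$4$ mechanism is consistent with the cited proof.
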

\begin{remark}\textup{
	 Lemma 4.1 in \cite{wang2017JCTB} was restricted to controllable graphs. However the assumption of controllability is essentially only used for guaranteeing the rationality of $Q$ as in Theorem \ref{unique}.  Another major restriction in  \cite{wang2017JCTB} is that $\br W(G)=\lceil\frac{n}{2}\rceil$, which is satisfied for graphs in $\mathcal{F}_n$ introduced here. Note that we have already shown that each matrix in $\mathcal{Q}_G$ is rational for $G\in \mathcal{F}_n\subset \mathcal{H}_n$  in  Theorem \ref{main}.  The original argument for Lemma 4.1 is also valid in this new setting.}
\end{remark}

The following result is new and crucial, which can be regarded as an extension of Lemma 3.10 in \cite{wang2017JCTB} to noncontrollable graphs.
\begin{proposition}\label{fullbr}
	Let $G\in \mathcal{F}_n$. Then $\br \frac{W^\T(G)\tilde{W}_1}{2}=\lfloor\frac{n}{2}\rfloor$.
\end{proposition}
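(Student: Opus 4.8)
The plan is to recognize $\tfrac12 W^\T(G)\tilde{W}_1$ as a block of columns of a Gram matrix whose determinant can be evaluated \emph{exactly} via the Binet--Cauchy formula, just as in the proof of Proposition \ref{vectxi}(\romannumeral3). Write $A=A(G)$, $W=W(G)$, $k=\lfloor\frac n2\rfloor$, and let $\tau,\tau'$ be the twins. Note that $\tilde{W}_1$ is a column-submatrix of $V=[e,Ae,\ldots,A^{n-2}e]$ when $n$ is even, and of $V_+=[Ae,A^2e,\ldots,A^{n-1}e]$ when $n$ is odd. Consequently, after discarding one redundant row of $W^\T$, the matrix $\tfrac12 W^\T\tilde{W}_1$ becomes a collection of $k$ columns of $\tfrac12 V^\T V$ (resp.\ $\tfrac12 V_+^\T V_+$). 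Since every entry of these Gram matrices has the shape $e^\T A^m e$ with $m\ge 1$ (apart from the corner $e^\T e=n$), Lemma \ref{even} shows the halved matrices are integral, so it suffices to control the $2$-rank of the relevant column block.

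For $n$ even this already settles the claim. Here $V^\T V$ has order $n-1=2k-1$, and by Proposition \ref{vectxi}(\romannumeral3) the Binet--Cauchy formula gives $\det V^\T V=\xi^\T(G)\xi(G)$. By Proposition \ref{eigQ} we have $\xi(G)=W_{\tau',n}(G)\alpha(G)$ with $\alpha^\T(G)\alpha(G)=2$, and (\ref{defb}) gives $W_{\tau,n}(G)=\pm 2^{k-1}b$, whence
\begin{equation*}
\det V^\T V=2\,W_{\tau,n}^2(G)=2^{2k-1}b^2 .
\end{equation*}
Dividing the $(2k-1)$-square matrix by $2$ yields $\det\tfrac12 V^\T V=b^2$, which is odd, so $\tfrac12 V^\T V$ is nonsingular over $\mathbb F_2$ and all its columns are $\mathbb F_2$-independent. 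In particular the $k$ columns at the even powers $e,A^2e,\ldots,A^{n-2}e$, which constitute $\tfrac12 V^\T\tilde{W}_1$, are independent; as this is a row-submatrix of $\tfrac12 W^\T\tilde{W}_1$, we obtain $\br\tfrac12 W^\T\tilde{W}_1\ge k$, hence equality.

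For $n$ odd I would run the parallel computation with $V_+$, whose Gram entries $e^\T A^{i+j}e$ ($i,j\ge 1$) are all even and whose even-power columns $A^2e,\ldots,A^{2k}e$ are exactly $\tilde{W}_1$. Binet--Cauchy now gives $\det V_+^\T V_+=\sum_i(\det Y_i)^2=\eta^\T\eta$, where $Y_i$ is $V_+$ with row $i$ deleted and $\eta=(W_{1,1}(G),\ldots,W_{n,1}(G))^\T$ is the first-column cofactor vector of $W$. Since $\eta$ lies in the one-dimensional left null space $N(W^\T)$, Proposition \ref{eigQ} forces $\eta=W_{\tau',1}(G)\alpha(G)$, so $\det V_+^\T V_+=2\,W_{\tau',1}^2(G)$. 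This is precisely where the noncontrollable setting departs from the even case: $V_+^\T V_+$ has \emph{even} order $2k$, while $v_2(\det V_+^\T V_+)=1+2v_2(W_{\tau',1}(G))$ is \emph{odd} (with $v_2(\cdot)$ the $2$-adic valuation); thus $\tfrac12 V_+^\T V_+$ is unavoidably singular modulo $2$, and the ``all columns independent'' shortcut is lost.

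The main obstacle is therefore to show that this forced mod-$2$ degeneracy never falls among the even-power columns. Concretely, I would establish that $v_2(W_{\tau',1}(G))=k$ exactly. Because the adjugate of $W$ has rank one, the ratio of the first- to last-column cofactors is constant and equals the constant term of the main polynomial, i.e.\ $W_{\tau',1}(G)=m_G(0)\,W_{\tau',n}(G)$; so the statement $v_2(W_{\tau',1}(G))=k$ is equivalent to $v_2(m_G(0))=1$, which I expect to extract from $G\in\mathcal F_n$ through the Smith normal form of $W$. Granting this, $\tfrac12 V_+^\T V_+$ has $\mathbb F_2$-nullity exactly $1$, and it remains to check that its unique null direction (governed by $\alpha(G)$ and the coefficients of $m_G$) has a nonzero coordinate at an odd power, so that no nontrivial dependence can be supported on the even-power columns $\tilde{W}_1$; this again gives $\br\tfrac12 W^\T\tilde{W}_1\ge k$, hence $=k$. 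Pinning down this exact $2$-adic valuation together with the support of the null vector is the crux I anticipate absorbing most of the effort.
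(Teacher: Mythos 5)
Your even-$n$ argument is correct and is essentially the paper's: halve the Gram matrix $V^\T V$ of $V=[e,Ae,\ldots,A^{n-2}e]$, compute $\det V^\T V=\xi^\T(G)\xi(G)=2W_{\tau,n}^2(G)=2^{2k-1}b^2$ via Binet--Cauchy and Proposition \ref{eigQ}, conclude $\det\tfrac12 V^\T V=b^2$ is odd, and read off full column rank of the relevant column block.

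The odd-$n$ case, however, is where the real content lies, and there your proposal has a genuine gap. Your choice of $V_+=[Ae,\ldots,A^{n-1}e]$ forces you to halve a Gram matrix of \emph{even} order $2k$ whose determinant $2W_{\tau',1}^2(G)$ has odd $2$-adic valuation, so $\tfrac12 V_+^\T V_+$ is unavoidably singular over $\mathbb{F}_2$ --- you correctly diagnose this, but the two repairs you then need, namely $v_2(m_G(0))=1$ (equivalently $v_2(W_{\tau',1}(G))=k$ exactly, so that the nullity is exactly $1$) and the statement that the unique null direction is not supported on the even-power coordinates, are both left unproven and are themselves nontrivial; neither follows in any evident way from the Smith normal form of $W(G)$, and you explicitly defer them as ``the crux.'' As it stands the odd case is a plan, not a proof. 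The paper sidesteps the parity mismatch entirely by an \emph{asymmetric} halving: writing $V=[e,AU]$ with $U=[e,Ae,\ldots,A^{n-3}e]$, it divides only the last $n-2$ rows of $V^\T V$ by $2$, producing an integral matrix $M$ with $\det M=2^{-(n-2)}\det V^\T V=b^2$ odd; since the first row of $M$ is $(n,e^\T AU)\equiv(1,0,\ldots,0)\pmod 2$, the block $\tfrac12 U^\T A^2U$ of order $n-2$ is nonsingular over $\mathbb{F}_2$, and $\tilde{W}_1$'s columns sit inside $A^2U=[A^2e,\ldots,A^{n-1}e]$, which finishes the argument with no need to analyze $m_G(0)$ or any null vector. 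If you want to salvage your route, you would essentially have to reprove the paper's Claim 2 anyway in order to pin down the valuation and the support of the null vector, so I would recommend switching to the asymmetric halving directly.
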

\begin{proof}
%
%

 Let $V$ be the matrix obtained from $W(G)$ by removing the last column, that is, $V=[e,Ae,\ldots,A^{n-2}e]$.
	
	\noindent\emph{Claim} 1: $2^{2\lfloor\frac{n}{2}\rfloor-1}\mid \det V^\T V$ but $2^{2\lfloor\frac{n}{2}\rfloor}\nmid\det V^\T V$.
	
	By Proposition \ref{vectxi}(\romannumeral3) and Proposition \ref{eigQ}, we have
	$$ \det V^\T V=\xi^\T (G)\xi(G)=2W_{\tau,n}^2(G).$$  By Proposition \ref{detsnf}, we know that $ W_{\tau,n}(G)=\pm2^{\lfloor\frac{n}{2}\rfloor-1}b$. Consequently,  $\det V^\T V=2^{2\lfloor\frac{n}{2}\rfloor-1}b^2$. This proves the claim as $b$ is odd.

	Now assume that $n$ is even. Then Lemma \ref{even}, together with the fact that $e^\T e$ is even, implies that $\frac{V^\T V}{2}$ is an integral matrix. By Claim 1, the determinant of  $\frac{V^\T V}{2}$ is odd and hence $\frac{V^\T V}{2}$ has full column rank over $\mathbb{F}_2$. This indicates that  $\frac{V^\T\tilde{W}_1}{2}$ also has full column rank as $\tilde{W}_1$ consists of some columns of $V$. Moreover, as $\frac{V^\T\tilde{W}_1}{2}$ can be  obtained from  $\frac{W^\T(G)\tilde{W}_1}{2}$ by removing the last row, we see that $\frac{W^\T(G)\tilde{W}_1}{2}$ necessarily has full column rank. This proves the proposition for even $n$.

	In the following we assume that $n$ is odd. Let $U=[e,Ae,\ldots,A^{n-3}e]$.
	
	\noindent\emph{Claim} 2: $\br \frac{U^\T A^2 U}{2}=n-2.$
	
	As $V=[e,AU]$, we have
	\begin{equation}
	V^\T V=\left[ \begin{array}{cc}
	n& e^\T AU \\
	U^\T Ae&U^\T A^2 U
	\end{array}\right].
	\end{equation}
	Define
	\begin{equation}
	M=\left[ \begin{array}{cc}
	n& e^\T AU \\
	\frac{1}{2}U^\T Ae&\frac{1}{2}U^\T A^2 U
	\end{array}\right],
	\end{equation}
	which is clearly an integral matrix by Lemma \ref{even}. 	As $n$ is odd, we see from Claim 1 that $2^{n-2}\mid \det V^\T V$ but $2^{n-1}\nmid \det V^\T V$. This indicates that
	$\det M$ is odd, that is $ M$ has full rank over $\mathbb{F}_2$. Note that the first row of $M$ is congruent to $(1,0,0,\ldots,0)$ over $\mathbb{F}_2$. We see that  $\frac{U^\T A^2 U}{2}$ also has full rank over $\mathbb{F}_2$. This proves Claim 2.
	
	Note that $A^2 U=[A^2 e,A^3 e,\ldots,A^{n-1}e]$ and $\tilde{W}_1=[A^2e,A^4e,\ldots,A^{n-1}e]$. One easily sees that  $\tilde{W}_1$ consists of some columns of $A^2 U$. Therefore, $\frac{U^\T \tilde{W}_1}{2} $ must have full column rank over $\mathbb{F}_2$ by Claim 2. Similarly,  as $\frac{U^\T\tilde{W}_1}{2}$ can be  obtained from  $\frac{W^\T(G)\tilde{W}_1}{2}$ by removing the last two rows, we see that $\frac{W^\T(G)\tilde{W}_1}{2}$ necessarily has full column rank. This proves the odd case and hence Proposition \ref{fullbr} follows.
\end{proof}

Now, we are ready to present the proof of Theorem~\ref{mainapp}.

\begin{proof}[{Proof of Theorem \ref{mainapp}}]
 Let $Q\in\mathcal{Q}_G$ and $\ell$ be its level. By Proposition \ref{fullbr}, $\frac{W^\T(G)\tilde{W}_1}{2}$ has full column rank over $\mathbb{F}_2$. Thus, (\ref{subset}) trivially follows as the left space contains only the zero vector. It follows from Proposition \ref{mainfrom2} that $\ell$ is odd.  Suppose to the contrary that $\ell$ has an odd prime factor, say $p$. Then by Corollary \ref{pdn1}, we have $p\mid b$ and consequently (\ref{addcon}) holds by the condition of this theorem. It follows from Theorem \ref{mainodd} that $p\nmid \ell$. This is a contradiction. Thus, we must have $\ell=1$. This indicates that $\mathcal{Q}_G$ contains only permutation matrices and hence $G$ is DGS by  Corollary \ref{dgsperm}. This finally completes the proof of Theorem \ref{mainapp}.
 \end{proof}

\section{Some examples}
In this section, we give some examples for illustrations.

\begin{example} \textup{Let $T_n$ and $U_n$ ($n\ge 2$) be two series of graphs as shown in Fig.~1. Using Mathematica, for small $n$, say $n<200$, we find that the determinants of $W_{1n}(T_n)$ and $W_{1n}(U_n)$ behave extremely regular, namely
\begin{equation}\label{conequ1}
\det W_{1n}(T_n)=
\begin{cases}
\pm 2^{\lfloor\frac{n}{2}\rfloor-1} &\text{if~$4\nmid n$},\\
0 & \text{if~$4\mid n$,}
\end{cases}
\end{equation}
and
\begin{equation} \label{conequ2}
\det W_{1n}(U_n)= \begin{cases}
\pm 2^{\lfloor\frac{n}{2}\rfloor-1} &\text{if~$3\nmid n$},\\
0& \text{if~$3\mid n$.}
\end{cases}
\end{equation}
By Proposition \ref{detsnf2}, we see that $T_n\in \mathcal{F}_n^{*}$ when $4\nmid n$, and $U_n\in \mathcal{F}_n^{*}$ when $3\nmid n$.  By Theorem \ref{mainapp}, such graphs are DGS.
}
\end{example}
 \begin{figure}[htbp]
	\begin{center}
		\includegraphics[height=2.6cm]{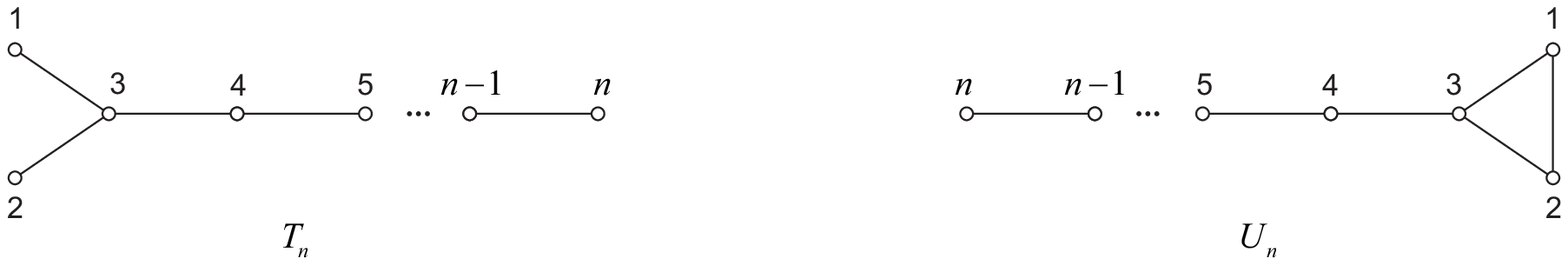}
		
		{\bf Fig.~1~}{Graphs $T_n$ and $U_n$}
	\end{center}
	\label{nK4}
\end{figure}
It is worth mentioning that for controllable graphs, Mao et al.~\cite{mao2015LAA} found a similar phenomenon in searching $n$-order trees ($n$ even) with $\det W(G)=2^{n/2}$; see Conjecture 1 in \cite{mao2015LAA}.  Although Eqs.~(\ref{conequ1}) and (\ref{conequ2}) seem simpler than the conjecture of Mao et al.,  direct proofs of the two equalities are still out of reach at this moment.

\begin{example}\textup{Let $n=10$. Let $G$ be a graph whose adjacency matrix $A=A(G)$ is given as follows:
		 \begin{equation*}
		 A=\left[ \begin{array}{cccccccccc}
		 0&1&0&1&0&0&0&0&0&0\\
		 1&0&0&1&0&0&0&0&0&0\\
		 0&0&0&0&1&0&1&1&1&1\\
		 1&1&0&0&1&1&0&1&1&0\\
		 0&0&1&1&0&0&0&1&1&0\\
		 0&0&0&1&0&0&1&0&0&0\\
		 0&0&1&0&0&1&0&1&0&1\\
		 0&0&1&1&1&0&1&0&0&1\\
		 0&0&1&1&1&0&0&0&0&1\\
		 0&0&1&0&0&0&1&1&1&0
		 \end{array}
		 \right].
		 \end{equation*}
It can be easily verified using Mathematica that the SNF of $W(G)$ is
		 $$\diag (1, 1, 1, 1, 1,2, 2, 2, 304690, 0).$$
Noting that the first two vertices are twins in $G$ and $304690=2\times 5\times 30469$, we see $G\in \mathcal{F}_n$.
As the twins are adjacent, we have $\lambda_1=-1$. Write $p_1=5$ and $p_2=30469$. Direct calculation shows that $\lambda_0(G;p_1)\equiv 2\pmod{p_1}$ and $\lambda_0(G;p_2)\equiv 1224\pmod{p_2}$. Thus $\lambda_0(G;p_i)\not\equiv \lambda_1\pmod{p_i}$ for $i=1,2$, that is, (\ref{addcon}) holds for both $p_1$ and $p_2$. Thus, $G$ is DGS by Theorem \ref{mainapp}.		
	}
\end{example}
Now we give an example to illustrate the possibility that even when (\ref{addcon}) fails, using Proposition \ref{mainp}, we may still be able to guarantee a graph $G\in\mathcal{F}_n$ to be DGS.
\begin{example} \textup{Let $G$ be a graph with adjacency matrix
\begin{equation*}
A=\left[ \begin{array}{ccccccccccccc}
0&1&0&0&0&1&1&1&1&1&0&0&1\\
1&0&0&0&0&1&1&1&1&1&0&0&1\\
0&0&0&1&0&0&0&0&1&0&0&1&1\\
0&0&1&0&0&0&1&1&1&1&1&0&0\\
0&0&0&0&0&0&0&1&1&1&1&1&0\\
1&1&0&0&0&0&0&0&1&0&0&1&1\\
1&1&0&1&0&0&0&1&0&0&1&1&1\\
1&1&0&1&1&0&1&0&0&0&0&0&0\\
1&1&1&1&1&1&0&0&0&0&0&1&0\\
1&1&0&1&1&0&0&0&0&0&1&1&0\\
0&0&0&1&1&0&1&0&0&1&0&0&0\\
0&0&1&0&1&1&1&0&1&1&0&0&0\\
1&1&1&0&0&1&1&0&0&0&0&0&0
\end{array}\right].
\end{equation*}
It can be easily verified using Mathematica that the SNF of $W(G)$ is
 $$\diag (1, 1, 1, 1, 1, 1, 1, 2, 2, 2, 2, 247799709690, 0).$$
Noting that $G$ contains the first two vertices as twins and  from the standard factorization $247799709690=2\times 3\times 5\times 13\times 3607\times 176153$, we see that $G\in \mathcal{F}_n$. Let $p_1,\ldots,p_5$ denote the five odd prime factors in increasing order. It can be verified that (\ref{addcon}) holds unless $p=p_2=5$.  Thus, we can not  exclude the possibility that $p_2\mid \ell$ by Theorem \ref{mainodd}, we need more calculations suggested by Proposition \ref{mainp}.}

 \textup{For the factor $p=p_2=5$, we obtain
 $\beta(G;p_2)=(2,2,1,0,0,2,2,4,0,2,1,3,1)$ and hence $-\frac{p_2+1}{2}\beta^\T(G;p_2)\beta(G;p_2)=-144\equiv 1\pmod{5}$. Now, we can take $c_0=1$ and
 $\gamma_0=(1,3,1,0,0,2,2,4,0,2,1,3,1)$.
 Note that $\lambda_1=-1$ and $\lambda_0(G;p)\equiv \lambda_1\pmod{5}$. We may take $\lambda_0=-1$ and then direct calculation shows that $\gamma_0^\T (A-\lambda_0 I_n)\gamma_0\not\equiv 0\pmod{5^2}$. Therefore, by Proposition \ref{mainp}, we must have $5\nmid \ell(Q)$ for any $Q\in \mathcal{Q}_G$. Thus, $G$ is DGS.}
\end{example}
 \section*{Acknowledgments}
This work is supported by the National Natural Science Foundation of China (Grant Nos.
\,12001006, 11401044, 11971376  and 11971406)
and the Scientific Research Foundation of Anhui Polytechnic University (Grant No.\,2019YQQ024).


\begin{thebibliography}{99}
\bibitem{Gantmacher}F. R. Gantmacher, The theory of matrices Vol. 1, Chelsea Publishing Co. Rhode Island (1959).

\bibitem{hagos2002} E. M. Hagos, Some results on graph spectra, \emph{Linear Algebra Appl.}, 356 (2002) 103--111. 	

 \bibitem{johnson1980JCTB} C. R. Johnson, M. Newman, A note on cospectral graphs, \emph{J. Combin. Theory, Ser. B}, 28 (1980) 96--103.

 \bibitem{liu2017EJC}F. Liu, W. Wang, A note on non-$\mathbb{R}$-cospectral graphs, \emph{Electron. J. Combin.}, 24 (1) (2017) \#48.
   	
\bibitem{liu2019DM} F. Liu,  J. Siemons, W.  Wang, New families of graphs determined by their generalized spectrum, \emph{Discrete Math.}, 342 (4) (2019) 1108--1112.


\bibitem{mao2015LAA}L. Mao, F. Liu, W. Wang, A new method for constructing graphs determined by their generalized spectrum, \emph{Linear Algebra  Appl.}
477 (15) (2015) 112--127.


\bibitem{qiu2019EJC} L. Qiu, Y. Ji, W. Wang, On the generalized spectral characterizations of Eulerian graphs, \emph{Electron. J. Combin.}, 26 (1) (2019) \#P9.

\bibitem{qiu2019DM} L. Qiu, Y. Ji, W. Wang, A new arithmetric criterion for graphs being determined by their generalized $Q$-spectrum, \emph{Discrete Math}., 342 (2019) 2770--2782.

\bibitem{rowlinson2007AADM} P. Rowlinson, The main eigenvalues of a graph: a survey. \emph{Appl. Analysis  Discrete Math.} 1 (2), 455--471.

 \bibitem{wang2006PHD} W. Wang, \emph{On the spectral characterization of graphs}, PhD Thesis, Xi'an Jiaotong University, 2006 (in Chinese).

 \bibitem{wang2006EUJC}W. Wang, C.-X. Xu, A sufficient condition for a family of graphs being determined by their generalized spectra, \emph{Eur. J. Combin.}, 27 (6) (2006) 826--840.

 \bibitem{wang2013EJC} W. Wang, Generalized spectral characterization revisited, \emph{Electron. J. Combin.}, 20 (4) (2013) \#P4.
  \bibitem{wang2017JCTB} W. Wang, A simple arithmetric criterion for graphs being determined by their generalized spectra, \emph{J. Combin. Theory, Ser. B}, 122 (2017) 438--451.
\end{thebibliography}
\end{document}